\documentclass{article}

\usepackage[section]{placeins}
\usepackage[letterpaper, margin=1in]{geometry}
\usepackage{natbib}
\usepackage[todo,table]{MathEnv}
\usepackage{bbm}
\usepackage[stat,commdiag,alg]{MathShorthand}

\usepackage{breakcites}

\usepackage{tikz}
\usetikzlibrary{3d,calc}
\usepackage{blkarray}
\usepackage{mathtools}

\title{The Global Topology of Orthogonally Decomposable Tensor Landscapes}
\author{Chunyin Siu}
\date{Sep 12, 2026}

\begin{document}

\maketitle

\begin{abstract}
The homogeneous form associated with a symmetric tensor, restricted to the
sphere, is the objective of the best rank-one approximation problem and, with
random coefficients, the energy of a mean-field spin glass. Its critical points have been studied extensively, but the
global organization of the landscape they form is far less understood. We study this
global structure for positive orthogonally decomposable tensors.
We determine the persistent homology of the sublevel and superlevel filtrations in
closed form, for every homological dimension $q$, every ambient dimension $D$ and every tensor order $k$, via a
recurrence in the ambient dimension. Taking the coefficients to be random, we further
prove laws of large numbers for the resulting persistence diagrams at dimension $0$ and $D-2$, giving an
exact description of the typical global topology of a spin-glass-like energy
landscape.
We illustrate our results with numerical computations and simulations.
\end{abstract}

\section{Introduction}

Let $T$ be a symmetric order-$k$ tensor on $\mathbb{R}^D$, and consider its
associated homogeneous degree-$k$ form on the $(D-1)$-dimensional unit sphere $S^{D-1}$. The
critical points of this form are the eigenvectors of $T$, its critical values
the eigenvalues, and its maximizer determines the best rank-one approximation
and the spectral norm of $T$
\citep{lim05_tensor_eigenvalues,qi05_symmetric_tensor_eigenvalues}. This form is
of independent interest to two communities: in data analysis it is the objective
of the best rank-one tensor approximation, the basic primitive of tensor methods
for estimation and signal processing \citep{kolda09_tensor_review}; in
statistical physics, this form with random coefficients is the energy of a mean-field
spin glass, whose local minima are the metastable states of the
system and whose saddles are the barriers between them \citep{auffinger13_spinGlass_criticalPoints}. In either case, while there has been intense interest in the properties of the optima, much less is known about the global organization of these optima in the energy landscape, even though it governs the convergence to these optima and the transition between them.

In this work, we go beyond the study of critical points and study the shape of this global landscape. More precisely, we determine the topology of the
sublevel and superlevel sets of the form, and how it changes across the range
of values.

To this end, we specialize to a tractable subclass of positive orthogonally
decomposable tensors, for which there is an orthonormal basis in which the homogeneous form
is diagonal, i.e.
\begin{equation}\label{eqn:intro_fkD}
f_{k, D}(x) = \frac{1}{k} \sum_{1 \leq i \leq D} \lambda_i x_i^k,
\qquad x \in S^{D-1},
\qquad \lambda_1 \geq \cdots \geq \lambda_D > 0.
\end{equation}
with an appropriate change of basis.
Such tensors, their eigenvectors and singular vectors, and the local behavior
of power iteration on them are well understood
\citep{anandkumar14_tensorDecomposition,robeva16_symmetric_orthogonal_tensor_decomposition,robeva17_orthogonallyDecomposableTensor_singularVector}. Critically for our analysis, its gradient flow decouples across the coordinates, and this enables us to build an inductive argument about the global landscape by exploiting the nestedness of $S^{D-1} \subseteq S^D$.

Despite its relative simplicity, our model case still retains the essential difficulty of the problem: the form is nonconvex, with exponentially many critical points and, for even $k$, exponentially many connected components in its sublevel sets.

We study the global structure of the landscape of $f_{k,D}$ through the
evolution of its sublevel and superlevel sets
$\{x : f_{k,D}(x) \leq t\}$ and $\{x : f_{k,D}(x) \geq t\}$ as the level $t$
varies. The appropriate invariant of such a one-parameter family of spaces is
its persistent homology (defined in \cref{sec:persistent_homology})
\citep{carlsson09_topodata,edelsbrunner10comptopo},
which records in a persistence diagram the level at which each homological
feature, like connected component, loop, and void, is
created and the level at which it is destroyed. 

\subsection{Results and Contributions}

Our work lies at the intersection of nonconvex optimization, numerical
multilinear algebra, and computational topology, and its central contribution
is to compute, in closed form, the entire topological structure of a classical
nonconvex landscape.

\begin{description}
\item [Tensor analysis] Our main results
(\cref{thm:persistence_diagram_sublevel_even,thm:persistence_diagram_sublevel_odd}, and
\cref{cor:persistence_diagram_superlevel_even})
give the persistence diagrams of the sub- and super-level filtrations of
$f_{k,D}$ for every homological dimension, every $D$ and $k$. These results encode how
critical points assemble into a global structure, and in particular, how connected components of local optima merge with one another. This is precisely the structure that governs where descent methods converge.

\item[Computational topology] The field of computational topology is predominantly algorithmic. Given the curse of dimensionality, persistent homology at dimension beyond 2 is rarely computed. Our theoretical computation provides a rare window into high-dimensional topological behavior.

\item[Statistical physics] Finally, we consider the case of random coefficients $\lambda_i$ by proving laws of
large numbers for the resulting persistence diagrams
(\cref{prop:LLN_sublevel_persistence_dim_0_even,prop:LLN_superlevel_persistence_dim_0_even}). As the dimension grows, the diagrams concentrate on explicit deterministic
limits, giving the typical global topology of the landscape across the disorder.
This is, to our knowledge, a first description of the global topology of a
spin-glass-like energy landscape, complementing the extensive study of its critical
points.
\end{description}
Tensor optimization is a notoriously nonconvex problem, and it has been studied
mostly through local and pointwise information: eigenvectors, convergence of
iterations, counts of critical points. Here we use tools from algebraic topology
to describe its global structure, and in the orthogonally decomposable case we
determine that structure completely, laying a foundation for the topological
study of more general landscapes.

\subsection{Discussion}
\label{sec:discussion}

\paragraph{Topological Simplicity and the Importance of Global Topology} Our results show that typically persistence of topological features in $f_{k,D}$ are short, and hence $f_{k,D}$ is in fact topologically not complicated, despite its multitude of critical points.

This simplicity is illustrated in \cref{fig:persistence_digrams_across_dimensions_sublevel_odd,fig:persistence_digrams_across_dimensions_superlevel_odd,fig:persistence_digrams_across_dimensions_sublevel_even,fig:persistence_digrams_across_dimensions_superlevel_even}. From the closed form formulae in our main results, we see that the finite birth and death times only differ only by the contribution of one $\lambda_i$'s among several. We discuss in greater detail in \cref{sec:numerical_observations}.

While readily explainable with our closed form formulae, this topological simplicity is surprising. A priori, $f_{k,D}$ is known to have exponentially many critical points \citep{robeva16_symmetric_orthogonal_tensor_decomposition}, and from here it is tempting to infer that $f_{k,D}$ is very complicated. However, upon inspecting the flow lines that pair up critical points together, we have revealed that these critical points are in fact paired up with critical points with similar critical values, suggesting the nonconvexity resulting from these critical points are in fact mild.

More broadly, the current work highlights the importance of studying the global topology alongside local analysis. A lot of work on tensor analysis and spin glass models measure topological complexity by the count of critical points and distribution of critical values. Our result shows that while these statistics may be complicated (e.g. blowing up exponentially), the resultant topology could be simple (e.g. they pair up to critical points with similar critical values). Therefore, it is important to complement local analysis with the study of global topology.

\paragraph{Future Directions} Our result opens up several future directions.

\begin{description}
\item [General symmetric tensors]
Every symmetric order-$k$ tensor on $\mathbb{R}^D$ admits a symmetric decomposition, so its form can be
written as $g(x) = \tfrac{1}{k}\sum_{i=1}^{r} c_i \langle a_i, x\rangle^k$ for
some coefficients $c_1, ..., c_r$ and vectors $a_1, ..., a_r$. If the $a_i$'s are linearly independent, one may write $g(x) = f_{k,r}(Ax)$, where $A$ is the $r \times D$ matrix, so the relevant variational problem $\max_{x \in S^{D-1}} g(x)$ becomes
$\max_{y \in A(S^{D-1})} f_{k,r}(y)$, the same objective with an ellipsoidal feasible region. As such, understanding the orthogonally decomposable case is the first step towards understanding the general case.

\item[Other homological dimensions in the random model]
Our probabilistic results concern the persistence diagrams at dimensions 0 and $D-2$. For homology in a
fixed finite dimension $q$, or in fixed cofinite dimension $D-q$, the same argument generalizes. The
substantive open case is that of growing dimension, such as $q \sim D/2$, where
the relevant families of subsets grow combinatorially and the limiting behavior
is unclear. We expect their probabilistic behavior would be markedly different from random eigenvalues because of the absence of repulsion.

\item[Spin glass topology] There has been a growing interest in the topology of spin glass models \citep{auffinger13_spinGlass_criticalPoints,aros19_randomTensor}, but little is known about the global topology beyond statistics of critical points. While the topology of general spin glasses is expected to be more complicated than our specialized model, we speculate that the recurrence technique may help attack the topology of general spin glasses.

\end{description}

\subsection{Organization}

The rest of the paper is organized as follows. We review related work in \cref{sec:related_work}. We formally state our results in \cref{sec:main_results}, and present numerical examples in \cref{sec:simulations}. We review preliminaries for our proofs in \cref{sec:preliminaries}. We conduct the foundational Morse-theoretical computation in \cref{sec:morse_computation}. We prove our first main result, \cref{thm:persistence_diagram_sublevel_even}, in \cref{sec:morse_recurrence_even,sec:proof_sublevel_even}. We state Alexander duality in \cref{sec:proof_cor_persistence_diagram_superlevel_even}, and prove \cref{cor:persistence_diagram_superlevel_even} there as well. In \cref{sec:odd_k}, we prove our second main result, \cref{thm:persistence_diagram_sublevel_odd}, using Alexander duality. In \cref{sec:connected_components,sec:probabilistic_results}, we prove the corollaries of our main results. Facts from homological algebra are reviewed in \cref{sec:homological_algebra}.

\subsection*{Acknowledgement}

The author would like to thank Antonio Auffinger for inspiring the question, and Eduardo Paluzo-Hidalgo, Brice Huang, and Benjamin Thompson for insightful discussions. The author was supported by the Croucher Foundation.

Claude Opus 5.0 was used to draft the introduction, the literature review, polish the language, aid numerical computations, and Harmonic's Aristotle and Claude Opus 5.0 were used to fix typographical mistakes, and to conduct the verification that $f_{k, D}$ satisfies the Smale condition. The main results were proven \emph{without} the use of large language models.

\section{Related Work}
\label{sec:related_work}

The present work is the confluence of three lines of research.

\begin{description}
\item [Tensor analysis] The
variational theory of tensor eigenvalues and singular values was introduced by
\citep{lim05_tensor_eigenvalues} and \citep{qi05_symmetric_tensor_eigenvalues};
see \citep{kolda09_tensor_review} for a survey.
Orthogonally decomposable tensors, whose associated form is
diagonal~\eqref{eqn:intro_fkD}, are characterized in \citep{robeva16_symmetric_orthogonal_tensor_decomposition},
with their singular vectors analyzed in \citep{robeva17_orthogonallyDecomposableTensor_singularVector}; this structure
also underlies guaranteed tensor-decomposition methods for latent-variable
models \citep{anandkumar14_tensorDecomposition}.
These works describe the eigenvectors, singular vectors, and local optimization
dynamics; we complement them by computing the global topology of the landscape.
\item [Morse theory and persistent homology]
The idea of understanding a space through the changing topology of the sublevel
sets of a function on it originates in Morse theory
\citep{milnor63_morseTheory}.
Persistent homology, popularized by the rise of Topological Data Analysis (TDA), makes this idea quantitative and multiscale, recording the
births and deaths of homological features across all levels in a persistence
diagram \citep{carlsson09_topodata, edelsbrunner10comptopo}. The field of TDA, however, is predominantly computational, and persistence diagrams are
typically obtained algorithmically from raw data. We instead give the
diagrams, in closed form, of an important class of functions in numerical multilinear algebra and optimization theory.
\item[Statistical physics, random field theory, and random topology] For random tensors, the statistics of critical points, in particular their expected number via Kac-Rice formulae, have been
studied extensively in connection with spin glasses and the spiked-tensor model
\citep{auffinger13_spinGlass_criticalPoints,aros19_randomTensor}. The global topology of the landscape has received less
attention, and our probabilistic results address this gap. A separate
line of work studies the topology of random spaces, e.g.
simplicial complexes \citep{kahle09_randomCliqueComplex,hiraoka18_limiting_persistenceDiagram,bobrowski18_randomGeometricComplexes_survey,owada20_bettiNumber_processLevel,siu25_preferentialAttachment_homology} and percolation \citep{bobrowski20_percolation,duncan23_topologicalPercolation_torus}. Such spaces are often combinatorial in nature (\citep{perez23_persistentHomology_stochasticProcess} is a notable exception). Ours is a rare continuous and exactly solvable instance. Closely related to our work is the literature on random field theory, which studies smooth random
fields $f$ and their \emph{exceedance probability} $\mathbb{P}(\sup f \geq u)$
for large $u$. This probability is approximated, via the Gaussian kinematic formula, by the expectation of a topological quantity, namely the Euler characteristic of the superlevel sets (also known as excursion sets) \citep{adler81_randomFieldTheory,adler09_randomFieldTheory}. The Euler characteristic is a single aggregate invariant, examined in the extreme regime; we instead
exactly recover the full persistent homology, in every dimension and across all levels.
\end{description}

\section{Main Results}
\label{sec:main_results}

Let $k$ and $D$ be integers such that $k > 2$ and $D \geq 1$. Let $$\lambda_1 \geq \lambda_2 \geq ... \geq \lambda_D > 0.$$
Denote the $(D-1)$-dimensional unit sphere by
$$S^{D-1} = \{(x_1, ..., x_D) \in \mathbb{R}^D: \sum_{1 \leq i \leq D} x_i^2 = 1\}.$$
Consider $f_{k, D}: S^{D-1} \to \mathbb{R}$ defined by
$$f_{k, D}(x) = \frac{1}{k} \sum_{1 \leq i \leq D} \lambda_i x_i^k.$$
In this paper, we investigate the topology of $f_{k, D}$. We first set up the following notation.
\begin{align}
\nu_E &= \frac{1}{k} \left(\sum_{i \in E} \lambda_i^{-\frac{2}{k-2}} \right)^{-\frac{k-2}{2}} = \frac{1}{k} \|\lambda\|_{L^{-\frac{2}{k-2}}(E)} & \text{ for } E \subseteq \{1, ..., D\}, \label{eqn:nu_general_set}\\
\nu_\xi &= \nu_{\{i: \xi_i \neq 0\}} & \text{ for } \xi \in \{-1, 0, 1\}^D. \label{eqn:nu_signum_vector}
\end{align}
Note that the norm notation $\|\cdot\|$ in \cref{eqn:nu_general_set} is an abuse of notation, as $-\frac{2}{k-2} \in [-2, 0)$ for $k \geq 3$.

\begin{table}
\centering
\caption{Points in persistence diagrams of the persistent homology $H_q(X_t; \mathbb{Z}/2\mathbb{Z})$ of $f_{k,D}$'s sublevel and superlevel filtrations $X_t$ with coefficients in $\mathbb{Z}/2\mathbb{Z}$, where $D \geq 1$. See \cref{sec:persistent_homology,sec:homological_algebra} for definitions.}
\label{tab:persistence_diagrams_fkD}
\newcolumntype{C}[1]{>{\centering\arraybackslash}p{#1}}
\begin{tabular}{|C{0.8cm}|C{7.3cm}|C{1.7cm}||C{3cm}|C{1.1cm}|}
\hline
dim ($q$) & \centering points with finite death times & multi-plicities & points with infinite death times & multi-plicities
\\ \hline \hline
\multicolumn{5}{|c|}{Even $k$, sublevel filtration $X_t = \{x: f_{k,D}(x) \leq t \}$. See \cref{thm:persistence_diagram_sublevel_even}.} \\
\hline
$0$ & $(\nu_{\{1, ..., D\}}, \nu_{\{1, ..., D\} - \{i\}})$ for $1 \leq i \leq D$ & $2^{D-i}$ & $(\nu_{\{1, ..., D\}}, \infty)$ & 1
\\ \hline
$1$ to $D-3$ & $(\nu_E, \nu_{E - \{i\}})$, where $E \subseteq \{1, ..., D\}$, $|E| = D-q$, $\{1, ..., i\} \subseteq E$ & $2^{|E| - i}$ & none & N/A
\\ \hline
$D-2$ & $(\nu_{\{1, i\}}, \nu_{\{i\}})$ for $2 \leq i \leq D$; $(\nu_{\{1, 2\}}, \nu_{\{1\}})$ & 2; 1 (resp.) & none & N/A
\\ \hline
$D-1$ & none & N/A & $(\nu_{\{1\}}, \infty)$ & 1
\\ \hline
$\geq D$ & none & N/A & none & N/A
\\ \hline \hline
\multicolumn{5}{|c|}{Even $k$, superlevel filtration $X_t = \{x: f_{k,D}(x) \geq t \}$. See \cref{cor:persistence_diagram_superlevel_even}.} \\
\hline
$0$ & $(\nu_{\{1, i\}}, \nu_{\{i\}})$ for $2 \leq i \leq D$; $(\nu_{\{1, 2\}}, \nu_{\{1\}})$ & 2; 1 (resp.) & $(-\infty, \nu_{\{1\}})$ & 1
\\ \hline
$1$ to $D-3$ & $(\nu_E, \nu_{E - \{i\}})$, where $E \subseteq \{1, ..., D\}$, $|E| = q+2$, $\{1, ..., i\} \subseteq E$ & $2^{|E| - i}$ & none & N/A
\\ \hline
$D-2$ & $(\nu_{\{1, ..., D\}}, \nu_{\{1, ..., D\} - \{i\}})$ for $1 \leq i \leq D$ & $2^{D-i}$ & none & N/A
\\ \hline
$D-1$ & none & N/A & $(-\infty, \nu_{\{1, ..., D\}})$ & 1
\\ \hline
$\geq D$ & none & N/A & none & N/A
\\ \hline \hline
\multicolumn{5}{|c|}{Odd $k$, sublevel filtration $X_t = \{x: f_{k,D}(x) \leq t \}$. See \cref{thm:persistence_diagram_sublevel_odd}.} \\
\multicolumn{5}{|c|}{(The points for the superlevel filtration with odd $k$ are the same except that}\\
\multicolumn{5}{|c|}
{signs are flipped and birth and death are flipped.)}\\
\hline
$0$ & $(-\nu_{\{i\}}, -\nu_{\{1, i\}})$ for $2 \leq i \leq D$; $(\nu_{\{1, ..., D\}}, \nu_{\{2, ..., D\}})$ & 1; 1 & $(-\nu_{\{1\}}, \infty)$ & 1
\\ \hline
$1$ to $D-3$ & $(-\nu_{E^-}, -\nu_{{E^-} \cup \{1\}}), (\nu_{{E^+} \cup \{1\}}, \nu_{E^+})$, where $E^\pm \subseteq \{2, ..., D\}$, $|E^-| = q+1$, $|E^+| = D-q-1$ & 1, 1 & none & N/A
\\ \hline
$D-2$ & \parbox{7.5cm}{\centering $(-\nu_{\{2, ..., D\}}, -\nu_{\{1, ..., D\}})$; \\ \centering $(\nu_{\{1, i\}}, \nu_{\{i\}})$ for $2 \leq i \leq D$} & 1; 1 & none & N/A
\\ \hline
$D-1$ & none & N/A & $(\nu_{\{1\}}, \infty)$ & 1
\\ \hline
$\geq D$ & none & N/A & none & N/A
\\ \hline
\end{tabular}
\end{table}

Our first main result is as follows.

\begin{theorem}\label{thm:persistence_diagram_sublevel_even}
Consider the persistence diagrams of the sublevel filtration (defined in \cref{sec:persistent_homology}) of $f_{k, D}: S^{D-1} \to \mathbb{R}$ with homological coefficients in $\mathbb{Z}/2\mathbb{Z}$ (defined in \cref{def:singular_homology}). Suppose $k$ is even.
\begin{enumerate}
\item For points with infinite death times, their birth times are as follows:
\begin{align*}
& \nu_{\{1, ..., D\}} & \text{ at dimension } 0
\\
& \nu_{\{1\}} & \text{ at dimension } D-1,
\end{align*}
and their multiplicities are both $1$.
\item The points with finite death times at dimension $q$, where $0 \leq q \leq D-2$, are precisely points of the homogeneous form
$$(\nu_E, \nu_{E - \{i\}}),$$
where $E$ ranges over subsets of $\{1, ..., D\}$ with $D - q$ elements such that $1 \in E$, whereas $i$ ranges over elements of $E$ such that $\{1, ..., i\} \subseteq E$. Further, the multiplicity of $(\nu_E, \nu_{E - \{i\}})$ is $2^{|E| - i}$.
\item The persistence diagram at dimension $D - 1$ or higher has no point with finite death time.
\end{enumerate}
\end{theorem}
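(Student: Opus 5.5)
The plan is to treat $f_{k,D}$ as a Morse function on $S^{D-1}$ and compute its persistence diagram from the filtered Morse complex over $\mathbb{Z}/2\mathbb{Z}$. The matrix reduction is carried out inductively in $D$, using the nestedness $S^{D-2}=\{x_D=0\}\subseteq S^{D-1}$ and the fact that the gradient flow decouples across coordinates.

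\textbf{Step 1: critical points, values and indices.} Since $k$ is even, every critical point of $f_{k,D}$ is determined by a sign vector $\xi\in\{-1,0,1\}^D\setminus\{0\}$. On the support $E=\{i:\xi_i\neq0\}$ it satisfies $x_i=\xi_i\,(c/\lambda_i)^{1/(k-2)}$, with $c>0$ fixed by normalization. A direct computation gives critical value $\nu_\xi=\nu_E$ as in \cref{eqn:nu_general_set}.

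The Hessian is computed in two groups of directions. Tangent directions inside $\mathrm{span}\{e_i:i\in E\}$ are positive definite, because the restricted form is minimized at the interior point. Each direction $e_j$ with $j\notin E$ is strictly negative, since the $x_j^k$ term is of order $k>2$ while the sphere constraint lowers the other terms quadratically.

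Hence $\xi$ is nondegenerate, with Morse index $D-|E|$. This means the critical points of index $q$ are the $2^{D-q}$ sign patterns on each support of size $D-q$. Note also that $\nu_E$ is strictly decreasing under enlarging $E$. I would also check the Smale (transversality) condition, which the paper says is verified.

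\textbf{Step 2: the Morse complex.} Because the flow decouples, the descending manifold of $\xi$ can be written down explicitly. It consists of the points whose signs agree with $\xi$ on $E$ and which may turn on any further coordinates $j\notin E$. So the closures of the descending cells form a regular CW structure on $S^{D-1}$, namely the one dual to the boundary of the cross-polytope.

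The Morse boundary of a cell $\xi$ with support $E$ is then the sum over $j\notin E$ of the two cells $\xi\pm e_j$, with all coefficients equal to $1$ mod $2$. Each cell is filtered by its value $\nu_E$.

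\textbf{Step 3: recurrence in $D$ and reduction.} I would split the cells into two groups:
\begin{itemize}
\item those with $\xi_D=0$, which form the complex of $f_{k,D-1}$ on the equator with degrees shifted by one;
\item those with $\xi_D=\pm1$, which form two further copies of the same combinatorial type, indexed by the sign of $x_D$.
\end{itemize}
The full filtered complex is then a mapping-cone-type extension of these pieces. Because $\lambda_D$ is smallest, adjoining $D$ to a support lowers $\nu$ the least.

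I then run the standard column reduction with the elder rule. The claim to prove is the following. The index-$q$ cell $\xi$ with support $E$ is killed by the index-$(q+1)$ cell on $E-\{i\}$, where $i$ is chosen as follows: $i$ is the coordinate whose removal produces the earliest-born (largest-$\nu$) cell still available as a pivot, and this forces $\{1,\dots,i\}\subseteq E$. Counting the sign patterns that receive this pivot gives $2^{|E|-i}$: the signs on $1,\dots,i$ are absorbed, up to the global symmetries of the pairing.

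Finally, the two unpaired cells would give the infinite bars. These are the global minimum class at $\nu_{\{1,\dots,D\}}$ in degree $0$ and the top class born at $\nu_{\{1\}}$ in degree $D-1$, consistent with $H_*(S^{D-1})$. Part 3 of the statement follows because no cells exist in degree $\geq D$.

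\textbf{Main obstacle.} I expect the main difficulty to be Step 3: showing that the reduced pivots are exactly the stated pairs with the stated multiplicities. The cross-polytope complex has many equal-value ties, namely all $2^{|E|}$ sign patterns on a fixed support. I would handle this by fixing a consistent tie-breaking order, for instance lexicographic in the signs. I would then prove the pairing by induction on $D$, checking that reduced columns from the two signed copies $\xi_D=\pm1$ either cancel against each other or descend to pivots from the equatorial copy. As a sanity check, the multiplicities must sum correctly: each $E$ carries $2^{|E|}$ critical points, and all but the two essential classes must be paired.
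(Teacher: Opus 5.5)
\textbf{There is a genuine gap: Part 2 is asserted in Step 3 but not proved.} Your framework matches the paper's. You use the mod-$2$ Morse differential $\partial x^\xi=\sum_{\xi_j=0}(x^{\xi+e_j}+x^{\xi-e_j})$, which is the cellular complex of the boundary of the $D$-cube filtered by $\nu$. You split by $\xi_D\in\{+1,-1,0\}$ and do an inductive column reduction, and Parts 1 and 3 are indeed easy. But Step 3 states the answer without deriving it, and the heuristics you offer do not hold up:
\begin{itemize}
\item ``Earliest-born (largest-$\nu$)'' is self-contradictory in a sublevel filtration.
\item Nothing in your rule forces $\{1,\dots,i\}\subseteq E$, or explains why cells whose support misses $1$ are never births.
\item ``The signs on $1,\dots,i$ are absorbed, up to the global symmetries'' is not a count.
\end{itemize}

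The mechanism you plan to check also cannot occur as stated. By your own Step 2 formula, $\partial$ only enlarges supports. So the cells with $\xi_D=+1$ (resp.\ $-1$) span a subcomplex, and their columns reduce entirely inside their own sheet. That reduction is a copy of the $(D-1)$-reduction transported by $E\mapsto E\cup\{D\}$, which preserves the order of the $\nu$'s, so these columns never acquire equatorial pivots. The coupling runs the other way: an equatorial column is $\partial_{D-1}x^\xi+x^{\xi+e_D}+x^{\xi-e_D}$. Its two sheet entries lie in different sheets, so they cannot cancel each other.

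A minor point: the descending manifold of $x^\xi$ is not the $(D-1)$-dimensional open set you describe. It is the $(D-|E|)$-cell on which $\lambda_i|x_i|^{k-2}$ is constant on $E$ and strictly larger than $\lambda_j|x_j|^{k-2}$ for $j\notin E$. Your boundary formula is nonetheless correct.

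\textbf{What the inductive step needs.} The paper supplies the following through explicit matrices $R_D=\partial_D V_D$. Here $I^\pm$ sets $\xi_D=\pm1$.
\begin{itemize}
\item[(a)] Take an equatorial $x^\xi$ whose $(D-1)$-reduced column is nonzero (it equals $\partial_{D-1}x^\xi$). Its sheet entries $x^{\xi\pm e_D}$ precede its equatorial pivot $x^{\xi-e_i}$, where $i$ is the first zero of $\xi$. This holds because $\lambda_i\ge\lambda_D$ (with a consistent tie-break), so the old pivot survives.
\item[(b)] An equatorial column $x=\mathrm{low}(R_{D-1}y)$ must be reduced to zero by taking $V_Dx=R_{D-1}y+I^+V_{D-1}y+I^-V_{D-1}y$. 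The first term is an equatorial cycle with leading term $x$. The sheet terms cancel its sheet entries because $\partial_{D-1}V_{D-1}y=R_{D-1}y$. One must also check that $V_D$ stays unit upper triangular.
\item[(c)] The column of $x^{-e_1}$ is reduced to zero by $u_D=\sum_i(x^{e_i}+x^{-e_i})$, which is a mod-$2$ cycle.
\item[(d)] The only new pivot pairs are $(x^{\mathbf 1_{D-1}-e_D},x^{\mathbf 1_{D-1}})$ and $(x^{-e_1\pm e_D},x^{\pm e_D})$.
\end{itemize}
Together these give the recurrence $P_D=I^+P_{D-1}\cup I^-P_{D-1}\cup P_{D-1}\cup\{\text{three new pairs}\}$ on pivot pairs. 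Translated into values, this is the doubling recurrence that produces the formula.

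Unwinding the recurrence gives the rule you were reaching for, with lexicographic tie-break $+1\prec-1\prec0$. Suppose $\mathrm{supp}\,\zeta\neq\{1,\dots,D\}$, and let $m$ be maximal with $\{1,\dots,m\}\subseteq\mathrm{supp}\,\zeta$. Then $\zeta$ is a death cell exactly when $\zeta_1=\dots=\zeta_m=+1$, and it kills $\zeta-e_{m+1}$. This is what forces $\{1,\dots,i\}\subseteq E$. It also gives the multiplicity $2^{|E|-i}$, namely the free signs on $E\cap\{i+1,\dots,D\}$. Without (a)--(d), or a substitute argument, Part 2 remains unproved.
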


Note that $\nu_{\{1\}} = \lambda_1/k$. More generally, $\nu_{\{i\}} = \lambda_i/k$. The above result is summarized in the top block of \cref{tab:persistence_diagrams_fkD}, and its proof spans \cref{sec:morse_computation,sec:morse_recurrence_even,sec:proof_sublevel_even}.

Alexander duality (\cref{thm:alexander_duality}) gives the following corollary, which we prove in detail in \cref{sec:proof_cor_persistence_diagram_superlevel_even}. Note that in a superlevel set filtration, we call the larger coordinate the birth time to be consistent with the direction of the filtration. We maintain the convention that the first coordinate is less than or equal to the second coordinate.

\begin{corollary}\label{cor:persistence_diagram_superlevel_even}
Consider the persistence diagrams of the superlevel filtration of $f_{k, D}: S^{D-1} \to \mathbb{R}$ with homological coefficients in $\mathbb{Z}/2\mathbb{Z}$. Suppose $k$ is even.
\begin{enumerate}
\item For points with negative infinite death times, their birth times are as follows:
\begin{align*}
& \nu_{\{1, ..., D\}} & \text{ at dimension } D-1
\\
& \nu_{\{1\}} & \text{ at dimension } 0,
\end{align*}
and their multiplicities are both $1$.
\item The points with finite death times at dimension $q$, where $0 \leq q \leq D-2$, are precisely points with finite death time in the persistence diagram of the sublevel filtration at dimension $D-q-2$ of the form, i.e. points of the form
$$(\nu_E, \nu_{E - \{i\}}),$$
where $E$ ranges over subsets of $\{1, ..., D\}$ with $2 + q$ elements such that $1 \in E$, whereas $i$ ranges over elements of $E$ such that $\{1, ..., i\} \subseteq E$. Further, the multiplicity of $(\nu_E, \nu_{E - \{i\}})$ is $2^{|E| - i}$.
\item The persistence diagram at dimension $D - 1$ or higher has no point with finite death time.
\end{enumerate}
\end{corollary}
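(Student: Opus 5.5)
We derive the corollary from \cref{thm:persistence_diagram_sublevel_even} using Alexander duality (\cref{thm:alexander_duality}) on $S^{D-1}$, with $\mathbb{Z}/2\mathbb{Z}$ coefficients throughout. For each level $t$, write $A_t = \{x : f_{k,D}(x) \geq t\}$ and $B_t = \{x : f_{k,D}(x) < t\}$, so $S^{D-1} = A_t \sqcup B_t$. Alexander duality gives $\tilde H_q(A_t) \cong \tilde H^{D-q-2}(B_t)$. Over a field, cohomology is dual to homology, so $\tilde H_q(A_t) \cong \tilde H_{D-q-2}(B_t)^{\vee}$. The isomorphism is natural with respect to inclusions: as $t$ decreases, $A_t$ grows and $B_t$ shrinks, so the inclusion-induced maps on the superlevel side correspond to the duals of the inclusion-induced maps on the strict-sublevel side. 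The persistence module $\{\tilde H_q(A_t)\}$ is therefore the pointwise dual of $\{\tilde H_{D-q-2}(B_t)\}$, with the direction of the parameter reversed.

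\textbf{Step 1: strict versus non-strict sublevel sets.} $f_{k,D}$ is Morse--Smale with finitely many critical values $\nu_\xi$. Hence the modules for $\{f<t\}$ and $\{f\le t\}$ have the same barcode up to open/closed endpoint conventions, and the diagram points coincide. I will justify this by deformation retraction between critical values, using regular values and the gradient flow.

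\textbf{Step 2: dualizing a barcode.} The dual of an interval module is an interval module on the same interval, with the order reversed. So every finite bar $(b,d)$ of $\tilde H_{D-q-2}$ for the sublevel filtration becomes a finite bar $(b,d)$ of $\tilde H_q$ for the superlevel filtration. Under the paper's naming convention, its birth is the larger coordinate $d$. Multiplicities are preserved. Applying this to part 2 of \cref{thm:persistence_diagram_sublevel_even} with $D-q-2$ in place of $q$ gives $|E| = D-(D-q-2) = q+2$, which is exactly part 2 of the corollary.

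\textbf{Step 3: infinite bars and reduced versus unreduced homology.} This is the step needing the most care. Alexander duality uses reduced groups and is only valid for nonempty proper subsets. The essential classes have to be handled separately.

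\emph{Dimension $0$.} For $t > \nu_{\{1\}} = \max f$, the superlevel set is empty. It becomes nonempty exactly at $\nu_{\{1\}}$, giving the infinite bar $(-\infty, \nu_{\{1\}})$ in $H_0$. The remaining $H_0$ bars are the reduced ones, dual to the finite $H_{D-2}$ bars, as in Step 2.

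\emph{Dimension $D-1$.} $H_{D-1}(A_t)$ is nonzero only when $A_t = S^{D-1}$, that is, when $t \le \min f = \nu_{\{1,\dots,D\}}$. This gives the bar $(-\infty, \nu_{\{1,\dots,D\}})$.

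\emph{Consistency check.} These are precisely the duals of the sublevel infinite bars born at $\nu_{\{1\}}$ in dimension $D-1$ and at $\nu_{\{1,\dots,D\}}$ in dimension $0$. The reduced sublevel $\tilde H_0$ loses its infinite bar, so nothing extra is produced.

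\textbf{Step 4: top dimensions.} Part 3 follows because $A_t$ is a proper subset of $S^{D-1}$ whenever $t > \min f$. A proper subset of $S^{D-1}$ has no homology in degree $\ge D-1$; this also follows from duality with $\tilde H_{-1}$. At $t = \min f$ the only class that appears is the essential fundamental class already accounted for.

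The main obstacle is checking naturality of Alexander duality along the filtration, including behaviour at the critical levels. I plan to settle this by working with compact neighbourhoods (regular values $t \pm \epsilon$), where the standard natural form of the duality applies.
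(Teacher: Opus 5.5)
Your proposal is correct and follows essentially the same route as the paper: Alexander duality on $S^{D-1}$ together with the strict-versus-closed sublevel comparison (\cref{lem:strict_sublevel_homologous_to_sublevel}), then a transfer from cohomology back to homology, and a separate treatment of the base-point class in $H_0$ and the fundamental class in $H_{D-1}$ using the extreme levels $t \le \min f$ and $t > \max f$. The differences are only technical: you pass from cohomology to homology by pointwise linear duality of interval modules instead of citing the persistent duality theorem (\cref{thm:persistent_duality}), and you get vanishing in degrees $\ge D-1$ from duality instead of from Hatcher's Corollary 3.46.
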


The above corollary is summarized in the middle block of \cref{tab:persistence_diagrams_fkD}.

Our second main result is as follows.

\begin{theorem}\label{thm:persistence_diagram_sublevel_odd}
Consider the persistence diagrams of the sublevel filtration (defined in \cref{sec:persistent_homology}) of $f_{k, D}: S^{D-1} \to \mathbb{R}$ with homological coefficients in $\mathbb{Z}/2\mathbb{Z}$ (defined in \cref{def:singular_homology}). Suppose $k$ is odd.
\begin{enumerate}
\item For points with infinite death times, their birth times are as follows:
\begin{align*}
& -\nu_{\{1\}} & \text{ at dimension } 0
\\
& \nu_{\{1\}} & \text{ at dimension } D-1,
\end{align*}
and their multiplicities are both $1$.
\item The points with finite death times at dimension $q$, where $0 \leq q \leq D-2$, are precisely points of the form
$$(-\nu_{E^-}, -\nu_{E^- \cup \{1\}}), (\nu_{E^+ \cup \{1\}}, \nu_{E^+})$$
where $E^-$ and $E^+$ range over subsets of $\{2, ..., D\}$ (note that $1 \notin E^-, E^+$) such that $E^-$ has exactly $q + 1$ elements and $E^+$ has exactly $D - q - 1$ elements. Their multiplicities are all $1$.
\item The persistence diagram at dimension $D - 1$ or higher has no point with finite death time.
\end{enumerate}
\end{theorem}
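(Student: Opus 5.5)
We need a plan for the odd-$k$ theorem. The organization section says it is proved "using Alexander duality," and the even case is presumably available.

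Critical points of $f$ on the sphere: for a support $E$ and signs $\xi$, take $x_i = \xi_i c_i$ with $c_i \propto \lambda_i^{-1/(k-2)}$. The critical value is $\sum_{i \in E} \xi_i^k \lambda_i |x_i|^k/k$. For odd $k$, mixed signs give intermediate values. When all signs are positive the value is $\nu_E$, and when all are negative it is $-\nu_E$.

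The key observation for odd $k$ is the decomposition $S^{D-1} = P \cup N$, where $N$ is the region with all coordinates nonpositive, together with the rest. More useful is the sublevel set $\{f \le t\}$ for $t<0$. Since $f(x)<0$ forces some coordinate to be negative, we can compare with the even-type function $g(y)=\frac1k\sum \lambda_i |y_i|^k$. On the negative orthant, $f = -g$, and the homotopy type of $\{f\le t\}$ for $t<0$ should deformation retract onto $\{x \le 0 \text{ coordinatewise},\ g(x)\ge -t\}$. The gradient flow decouples, and the positive coordinates can be shrunk to zero while decreasing $f$.

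\medskip

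\noindent\textbf{Plan.}

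\textbf{Step 1 (negative levels).} First I prove that for $t<0$, the sublevel set $\{f_{k,D}\le t\}$ deformation retracts, monotonically in $t$, onto
\[
A_t=\{x\in S^{D-1}: x_i\le 0\ \forall i,\ g(x)\ge -t\}.
\]
To do this, I zero out the positive coordinates and renormalize. Scaling up the remaining negative coordinates only makes $f$ more negative. The retractions must be compatible across $t$, so that they induce isomorphisms of persistence modules.

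\textbf{Step 2 (the orthant as a simplex).} The closed negative orthant of the sphere is a $(D-1)$-simplex $\Delta$, and $A_t$ is a superlevel set of $g$ on $\Delta$. The even-$k$ computation on the sphere is sign-symmetric under the $2^D$ reflections, so restricting to one orthant quotients out the multiplicities $2^{|E|-i}$. Concretely, I run the Morse/recurrence argument of \cref{sec:morse_computation,sec:morse_recurrence_even} with boundary faces. The critical points of $g$ on $\Delta$ are the barycentric-type points $c_E$, with value $\nu_E$.

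The complement of $A_t$ in $\Delta$ is $\{g<-t\}\cap\Delta$. This is a neighborhood of the vertices $e_i$ with $\lambda_i/k<-t$, together with pieces of the faces spanned by them. Because $g$ is maximized at the vertices and decreases monotonically toward the interior along faces, I expect $\{g<-t\}\cap\Delta$ to be homotopy equivalent to the subcomplex of $\Delta$ spanned by the faces $E$ with $\nu_E<-t$. Then $A_t$ is homotopy equivalent to its complement in the simplex, namely the dual complex. From this I read off the persistence:
\begin{itemize}
\item the infinite class in dimension $0$ is born at $-\nu_{\{1\}}$;
\item a class in $H_q$ is born at $-\nu_{E^-}$ and killed at $-\nu_{E^-\cup\{1\}}$ for $|E^-|=q+1$ and $1\notin E^-$.
\end{itemize}
The pairing with the vertex $1$ comes from a discrete-Morse matching: the faces not containing $1$ are matched with those obtained by adding $1$. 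Since $\lambda_1$ is the largest, $\nu_{E\cup\{1\}}>\nu_E$ in the appropriate sense, and this makes the matching monotone. The result is exactly the first family of points.

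\textbf{Step 3 (positive levels).} Since $k$ is odd, $f(-x)=-f(x)$, so the superlevel filtration is the negation of the sublevel filtration. Applying Step 2 to $\{f\ge s\}$ for $s>0$ gives its persistence. Then Alexander duality on $S^{D-1}$ (\cref{thm:alexander_duality}), applied exactly as in the proof of \cref{cor:persistence_diagram_superlevel_even}, converts superlevel points in dimension $q'$ into sublevel points in dimension $D-2-q'$. This yields the second family $(\nu_{E^+\cup\{1\}},\nu_{E^+})$ with $|E^+|=D-q-1$, and the infinite class $(\nu_{\{1\}},\infty)$ in dimension $D-1$.

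\textbf{Step 4 (level zero).} The set $\{f\le 0\}$ should be contractible, or at least have no reduced homology that affects the diagram. Indeed, $\{f<0\}$ and $\{f>0\}$ each retract to an orthant simplex, and the zero level separates them. This shows that no bars cross $0$ apart from the two essential ones, so the two halves glue into a single diagram. I check the edge cases $q=0$ and $q=D-2$ directly against the formulas.

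\medskip

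The main obstacle is Step 2: proving that the superlevel set of $g$ on the simplex has the homotopy type of the dual complex with these exact thresholds, and that the matching with vertex $1$ yields exactly the stated bars with multiplicity one. I would handle this by induction on $D$ along faces, mirroring the even-case recurrence, using the decoupled gradient flow to push points toward faces.
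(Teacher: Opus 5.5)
Steps 1, 3 and 4 are sound. The homotopy $x\mapsto\bigl((1-s)x^{+}+x^{-}\bigr)/\bigl\|(1-s)x^{+}+x^{-}\bigr\|$ keeps $f\le t$ for every $t\le 0$, so $\{f\le t\}$ retracts onto $A_t$ compatibly in $t$, and $\{f\le 0\}$ is contractible. Step 3 is the paper's Alexander-duality step. Step 4 makes explicit the gluing at level $0$ that the paper leaves implicit.

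The gap is Step 2. It carries the entire computation of the negative half, and as written it does not go through. First, $\{g<-t\}\cap\Delta$ is not a union of neighborhoods of vertices. When nonempty it is a single region containing the interior minimizer $c_{\{1,\dots,D\}}$; in the coordinates $u_i=x_i^2$ it is convex, because $g=\frac1k\sum_i\lambda_iu_i^{k/2}$ is convex. Also, the family $\{E:\nu_E<-t\}$ is closed under supersets, so it does not span the subcomplex you want.

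More seriously, the homotopy type of $A_t$ cannot be read off from that of its complement. A small open ball about an interior point and a small open neighborhood of a vertex are both contractible, yet their complements in $\Delta$ are $\simeq S^{D-2}$ and contractible respectively. What you need is that the inclusion $K_{-t}\hookrightarrow A_t$ is a homotopy equivalence for each $t$, where $K_{-t}=\bigcup\{F_E:\nu_E\ge -t\}$ is the union of the closed faces on which $\min g=\nu_E\ge -t$. Naturality is then automatic, since all maps are inclusions.

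Pushing along the decoupled flow does not by itself give this: trajectories reach lower faces only asymptotically, and the endpoint map is discontinuous. Convexity in $u$ does give it. Take a face $F_E$ with $\nu_E<-t$. Every segment from $c_E$ to $\partial F_E$ meets $\{g<-t\}$ in an initial subsegment, so radial projection from $c_E\notin A_t$ retracts $A_t\cap F_E$ onto $A_t\cap\partial F_E$. Doing this in order of decreasing face dimension retracts $A_t$ onto $K_{-t}$.

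Reading off the bars also needs more than a monotone matching. A filtration-compatible acyclic matching whose paired cells enter at different times does not in general give the persistence pairs. Here it works because each $(E,E\cup\{1\})$ with $\emptyset\ne E\subseteq\{2,\dots,D\}$ is an apparent pair:
\begin{itemize}
\item $E$ is the last facet of $E\cup\{1\}$ to enter, since $\nu_E\le\nu_{(E\cup\{1\})\setminus\{j\}}$ for $j\in E$ by $\lambda_1\ge\lambda_j$;
\item $E\cup\{1\}$ is the first cofacet of $E$ to enter.
\end{itemize}
These pairs exhaust every face except $\{1\}$, which carries the essential class born at $-\nu_{\{1\}}$. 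The inequality $\nu_{E\cup\{1\}}>\nu_E$ you wrote is reversed: adding an index always lowers $\nu$.

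By comparison, the paper needs neither Step 1 nor any retraction. By \cref{prop:morse_differential_tensor}, the critical points $x^\xi$ with $\xi\le 0$ span a Morse subcomplex whose differential is exactly the mod-$2$ simplicial boundary of the $(D-1)$-simplex, with face $E$ entering at $-\nu_E$. \cref{prop:column_reduction_odd} then reduces it by an explicit inductive $R_D=\tilde\partial_DV_D$, yielding the same pairing $\xi\leftrightarrow\xi+e_1$. Once Step 2 is repaired as above, your route trades that Morse-homology machinery (flow-line counts, the Smale condition) for an elementary convexity argument. The heuristic about quotienting the even-case multiplicities plays no role and can be dropped.
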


By the symmetry of $f_{k,D}$ when $k$ is odd, the superlevel filtration has the same persistence diagrams. The above result is summarized in the bottom block of \cref{tab:persistence_diagrams_fkD}, and is proven in \cref{sec:odd_k}.

\subsection{Corollaries of Main Results}

For concreteness, we describe the connected components of the sublevel sets and superlevel sets of $f_{k, D}$ in \cref{prop:components_sublevel_even,prop:components_superlevel_even,prop:components_sublevel_odd}. These propositions will be proven in \cref{sec:connected_components}.

We first set up some notation. Fix $D$ and let 
\begin{equation}\label{eqn:Xi_set_of_signum_vectors}
\Xi = \{-1, 0, 1\}^D - \{(0, ..., 0)\}.
\end{equation}
We call elements of $\Xi$ signum vectors.
For each signum vector $\xi \in \Xi$, let
$$H^\xi = \{x \in S^{D-1} : x_i > 0 \text{ whenever } \xi_i > 0, x_i < 0 \text{ whenever } \xi_i < 0\}.$$

\begin{proposition}\label{prop:components_sublevel_even}
Let $i \in \{1, ..., D\}$. Suppose $\nu_{\{1, ..., D\} - \{i - 1\}} \leq t < \nu_{\{1, ..., D\} - \{i\}}$. Let $\Xi_i = \{\xi \in \Xi: \xi_1 = ... = \xi_{i-1} = 0, 0 \notin \{\xi_i, ..., \xi_D\}\}$.
Then each connected component of $\{x \in S^{D-1}: f_{k, D} \leq t\}$ lies in $H^\xi$ for some $\xi \in \Xi_i$, and each $H^\xi$ with $\xi \in \Xi_i$ contains exactly one component.
\end{proposition}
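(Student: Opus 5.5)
The plan is to argue directly from the geometry of $f_{k,D}$, using three facts: a convexity substitution, the explicit minima of $f_{k,D}$ on coordinate subspheres, and the monotonicity of $\nu$. Here I read $\nu_{\{1,\dots,D\}-\{0\}}$ as $\nu_{\{1,\dots,D\}}$.

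\emph{Preliminary facts.} First, I record that $\nu_E$ is strictly decreasing under enlarging $E$, since $\lambda_j^{-2/(k-2)}>0$. Because $\lambda_j$ is nonincreasing, $\lambda_j^{-2/(k-2)}$ is nondecreasing in $j$, so $j\mapsto \nu_{\{1,\dots,D\}-\{j\}}$ is nondecreasing. Hence the stated range of $t$ makes sense, and for every $j<i$ we have $\nu_{\{1,\dots,D\}-\{j\}}\le \nu_{\{1,\dots,D\}-\{i-1\}}\le t$. Second, I use the standard Lagrange computation (the Morse computation of \cref{sec:morse_computation}): on the subsphere $\{x\in S^{D-1}: x_j=0 \text{ for } j\notin E\}$, the minimum of $f_{k,D}$ (with $k$ even) equals $\nu_E$. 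It is attained at the points with $x_j^{k-2}\propto \lambda_j^{-1}$ for $j\in E$, and such a point exists in every sign pattern on $E$.

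\emph{Step 1: no sign change in coordinates $i,\dots,D$.} For $j\ge i$, the minimum of $f_{k,D}$ on $\{x_j=0\}$ is $\nu_{\{1,\dots,D\}-\{j\}}\ge \nu_{\{1,\dots,D\}-\{i\}}>t$. So $X_t=\{f_{k,D}\le t\}$ misses every hyperplane $x_j=0$ with $j\ge i$. Each connected component therefore has constant nonzero signs in coordinates $i,\dots,D$, and so lies in $H^\xi$ for a unique $\xi\in\Xi_i$.

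\emph{Step 2: each $H^\xi\cap X_t$ is nonempty and connected.} I decompose $H^\xi$ into closed pieces $P_\sigma$, one for each choice of signs $\sigma\in\{\pm1\}^{i-1}$ of the first $i-1$ coordinates (weak inequalities there). The map $x\mapsto y=(x_1^2,\dots,x_D^2)$ sends $P_\sigma$ homeomorphically onto the part of the standard simplex with $y_j>0$ for $j\ge i$, which is a convex set. On it, $f_{k,D}$ becomes $\frac1k\sum_j \lambda_j y_j^{k/2}$, a convex function since $k/2\ge 2$. Thus $P_\sigma\cap X_t$ is homeomorphic to a convex set, hence connected. It is nonempty because it contains the global minimizer in that orthant, of value $\nu_{\{1,\dots,D\}}\le t$.

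Two pieces $P_\sigma,P_{\sigma'}$ differing only in the sign of coordinate $j<i$ share the face $\{x_j=0\}\cap P_\sigma$. On that face, $f_{k,D}$ attains the value $\nu_{\{1,\dots,D\}-\{j\}}\le t$ with the correct signs elsewhere, by the preliminary facts. So the two pieces meet inside $X_t$. Since the graph on $\{\pm1\}^{i-1}$ with single-sign-flip edges is connected, $H^\xi\cap X_t$ is connected. Combined with Step 1, each $H^\xi$, $\xi\in\Xi_i$, contains exactly one component.

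\emph{Main obstacle.} I expect the hardest step to be the convexity argument in Step 2. Care is needed that the squaring map is a homeomorphism on each closed sign-piece, including its boundary faces where some $x_j=0$ for $j<i$, and that the convex-sublevel image is taken inside a convex domain and not merely inside the open simplex. The remaining inputs, the minima on coordinate subspheres and the monotonicity of $\nu$, are routine.
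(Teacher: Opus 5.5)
Your proof is correct, and for the second claim it takes a genuinely different route from the paper. Step~1 is the paper's own argument: the minimum of $f_{k,D}$ on $\{x_j=0\}$ is $\nu_{\{1,\dots,D\}-\{j\}}>t$ for $j\ge i$.

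For connectedness, the paper argues dynamically. Every point of the sublevel set descends inside it to a critical point and then to a local minimum, so it suffices to connect the local minima in $H^\xi$. This is done by induction on $i$, using \cref{prop:morse_differential_tensor}: the index-one critical point $x^\zeta$, with $\zeta=\xi+\sum_{j<i-1}e_j$, sits at level $\nu_{\{1,\dots,D\}-\{i-1\}}\le t$ and has descent flow lines to local minima in both $H^{\xi+e_{i-1}}$ and $H^{\xi-e_{i-1}}$.

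You replace the flow by the substitution $y_j=x_j^2$. This turns each closed sign piece of the sublevel set into a sublevel set of the convex function $\frac1k\sum_j\lambda_j y_j^{k/2}$ on a convex subset of the simplex. You then glue adjacent pieces through the minimizer on their common face $\{x_j=0\}$. That gluing point is itself an index-one critical point $x^\eta$ with $\eta_j=0$, so both proofs merge through the same saddles; you simply use all $j<i$ at once rather than inductively.

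What your version buys:
\begin{itemize}
\item It is elementary and self-contained. It needs no flow-line counts, no invariance arguments, and no reduction of arbitrary points to local minima.
\item It works at the single level $t$, using only the monotonicity of $j\mapsto\nu_{\{1,\dots,D\}-\{j\}}$. This avoids the paper's induction over level ranges, which needs a monotone-in-$t$ reformulation when ties among the $\lambda_j$ make some of those ranges empty.
\item It shows that every piece, and every intersection of pieces, is homeomorphic to a convex set. That is the kind of structure a nerve-lemma argument could exploit in higher homological dimensions.
\end{itemize}

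What the paper's version buys: it fits the Morse-complex machinery used for the persistence diagrams, and it makes explicit which critical pairs realize each merge.
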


\begin{proposition}\label{prop:components_superlevel_even}
For even $k$, the local maxima of $f_{k, D}$ are $e_1, ..., e_D, -e_1, ..., -e_D$, with $e_1, -e_1$ being the global maxima. In the superlevel filtration of $f_{k, D}$, the connected components of $e_i$ and $-e_i$ merge with that of $e_1$ at the level $\nu_{\{1, i\}}$. The connected component of $-e_1$ merges with that of $e_1$ at the level $\nu_{\{1, 2\}} \geq \nu_{\{1, j\}}$ for $j \geq 2$.
\end{proposition}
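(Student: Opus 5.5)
We prove \cref{prop:components_superlevel_even} directly from the critical-point structure of $f_{k,D}$ and the dimension-$0$ superlevel diagram in \cref{cor:persistence_diagram_superlevel_even}.

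\textbf{Step 1: critical points.} On the sphere, $x$ is critical iff $\lambda_i x_i^{k-1} = \mu x_i$ for all $i$. So on the support $E$ of $x$, $x_i^{k-2} = \mu/\lambda_i$. For even $k$ this gives $|x_i| = (\mu/\lambda_i)^{1/(k-2)}$. The constraint $\sum_{i\in E} x_i^2=1$ fixes $\mu$, and the critical value is $\mu/k = \nu_E$. Thus the critical points are indexed by signum vectors $\xi\in\Xi$ with value $\nu_\xi$.

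\textbf{Step 2: local maxima.} The Hessian of the Lagrangian at a point supported on $E$ is
\begin{itemize}
\item $(k-1)\lambda_i x_i^{k-2} - \mu = (k-2)\mu > 0$ in the directions $i\in E$, restricted to the tangent space;
\item $-\mu < 0$ in the directions $i\notin E$.
\end{itemize}
The tangent space within the coordinate plane of $E$ has dimension $|E|-1$. Hence the Morse index for maximization is $D-|E|$ (negative directions), and the number of ascending directions is $|E|-1$. A point is a local maximum iff $|E|=1$, i.e.\ the points $\pm e_i$, with values $\lambda_i/k$. Since $\lambda_1$ is the largest, $\pm e_1$ are the global maxima.

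\textbf{Step 3: merge levels.} \cref{cor:persistence_diagram_superlevel_even} says the finite dimension-$0$ bars are $(\nu_{\{1,i\}},\nu_{\{i\}})$ with multiplicity $2$ for $i\ge 2$, plus one bar $(\nu_{\{1,2\}},\nu_{\{1\}})$, and one infinite bar born at $\nu_{\{1\}}$. By the elder rule, each component born at a local maximum dies when it merges into an older one. The bars born at $\nu_{\{i\}}=\lambda_i/k$ correspond to $\pm e_i$, and the bar born at $\nu_{\{1\}}$ (besides the infinite one) corresponds to $-e_1$.

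This identification is ambiguous when eigenvalues tie, so we argue directly rather than rely on the bars alone. Consider the great circle in the $(x_1,x_i)$-plane. Along the arc from $e_1$ to $\pm e_i$ in the positive/sign quadrant, $f$ restricted there is the two-variable form, whose only interior critical point is the saddle with value $\nu_{\{1,i\}}$ (the minimum on that arc). Therefore $\pm e_i$ and $e_1$ lie in one component of $\{f\ge \nu_{\{1,i\}}\}$, so the merge happens at level $\ge \nu_{\{1,i\}}$. The same argument via the arcs through $\pm e_2$ connects $-e_1$ to $e_1$ at level $\nu_{\{1,2\}}$.

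Conversely, the death times in the diagram force the merges to be no earlier than these levels. By \cref{cor:persistence_diagram_superlevel_even}, the multiset of death levels $\{\nu_{\{1,i\}}\}$ (twice each) together with $\nu_{\{1,2\}}$ is exactly what the path construction produces. Since each death must be at most the merge level we exhibited, and the multisets agree, they match. Finally, $\nu_{\{1,2\}}\ge\nu_{\{1,j\}}$ holds because $\nu_E$ is increasing in each $\lambda_i$ and $\lambda_2\ge\lambda_j$.

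\textbf{Main obstacle.} The delicate step is showing that $\pm e_i$ do not merge earlier through some other route, for example via $e_j$ with $j<i$. This needs a lower-bound (upper on level) argument. I would first try the multiset-matching argument above. If that falls short, the fallback is a direct estimate: on $\{f>\nu_{\{1,i\}}\}$, the component containing $e_i$ stays inside the region where $|x_i|$ dominates. To show this, check that every index-one saddle adjacent to $e_i$ has value at most $\nu_{\{1,i\}}$.
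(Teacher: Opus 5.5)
Your proof is correct and is essentially the paper's. The great-circle arc from $e_1$ to $\pm e_i$ through $x^{e_1\pm e_i}$ is exactly the union of the two descending flow lines the paper takes from \cref{prop:morse_differential_tensor} to show the merge has happened by level $\nu_{\{1,i\}}$. As in the paper, the reverse inequality comes from the dimension-$0$ diagram of \cref{cor:persistence_diagram_superlevel_even}. Your elder-rule pairing, with $e_1$ placed first among tied maxima, combined with the fact that a pointwise inequality between two equal finite multisets forces equality, is a more explicit version of the paper's ``otherwise the corollary is contradicted,'' so the fallback in your last paragraph is not needed.
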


\begin{proposition}\label{prop:components_sublevel_odd}
For odd $k$, the local minima of $f_{k, D}$ are $-e_1, ..., -e_D$ and a point $x^{\mathbf{1}_D} = x^{\sum_{1 \leq i \leq D} e_i}$, precisely defined in \cref{eqn:critical_points}, whose coordinates are all strictly positive. The global minimum is $-e_1$. In the sublevel filtration of $f_{k,D}$, the connected component of $-e_i$ merges with that of $-e_1$ at the level $-\nu_{\{1, i\}}$. At the level of $f_{k,D}(x^{\mathbf{1}_D})$, all the connected components of the $-e_i$'s have already merged, and the connected component of $x^{\mathbf{1}_D}$ merges with this component at the level $\nu_{\{2, ..., D\}}$.
\end{proposition}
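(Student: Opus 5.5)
We prove \cref{prop:components_sublevel_odd} from the dimension-$0$ row of \cref{thm:persistence_diagram_sublevel_odd}, supplemented by a direct analysis of the critical points and of explicit paths on the sphere.

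\textbf{Step 1: critical points.} On the sphere the Lagrange condition reads $\lambda_i x_i^{k-1} = \mu x_i$ for all $i$. Hence every critical point has, on its support $E$, coordinates $x_i = \pm(\mu/\lambda_i)^{1/(k-2)}$. For odd $k$ the sign of $x_i^{k-2}$ equals the sign of $x_i$. Therefore if $\mu>0$ all support coordinates are positive, and if $\mu<0$ all are negative. Normalizing gives the critical points $x^{\xi}$ with $\xi\in\{0,1\}^D$ or $\xi\in\{0,-1\}^D$ nonzero, with critical values $\pm\nu_\xi$.

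\textbf{Step 2: which are minima.} We compute the Morse index, or simply check the sign of the Hessian of the Lagrangian on the tangent space.
\begin{itemize}
\item In a coordinate direction $j\notin E$, the term $\lambda_j x_j^k/k$ is of odd order $k\ge 3$, so it can be decreased. Hence a critical point is a local minimum only if the cubic-or-higher term does not matter. Since that term is odd, the point is not a local minimum unless $E$ is full.
\item Alternatively, we argue on the tangent directions inside the support. For the negative points $-x^E$ with $|E|\ge2$, moving mass between two coordinates decreases $f$, because $-\sum \lambda_i |x_i|^k$ restricted to the sphere has a strict local maximum of $\sum\lambda_i|x_i|^k$ only at the vertices. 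So among the negative points only $-e_i$ survive, and they are genuine local minima: nearby, $f\approx -\lambda_i/k + O(\text{quadratic})$, with positive quadratic coefficient coming from the $x_i^k$ term.
\item For the positive points, $x^{E}$ with $E\neq\{1,\dots,D\}$ admits descent along a missing coordinate $j$ in the negative direction, since $x_j^k<0$. The point $x^{\mathbf{1}_D}$ is a local minimum by a second-variation computation on its interior positive orthant: the Hessian of $\sum\lambda_i x_i^k$ restricted there is a positive diagonal matrix minus $\mu$ times the identity, positive on the tangent space.
\end{itemize}
The global minimum is $-e_1$, with value $-\lambda_1/k=-\nu_{\{1\}}$.

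\textbf{Step 3: merging levels via the persistence diagram.} The dimension-$0$ diagram from \cref{thm:persistence_diagram_sublevel_odd} has the following points:
\begin{itemize}
\item the essential class born at $-\nu_{\{1\}}$;
\item the finite bars $(-\nu_{\{i\}},-\nu_{\{1,i\}})$ for $i\ge2$;
\item the bar $(\nu_{\{1,\dots,D\}},\nu_{\{2,\dots,D\}})$.
\end{itemize}
These births are exactly the values of the local minima $-e_i$ and $x^{\mathbf{1}_D}$; indeed $f(x^{\mathbf{1}_D})=\nu_{\{1,\dots,D\}}$ from Step 1. By the elder rule, each bar records the level at which the component of the corresponding minimum merges into an older one. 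It remains to identify the partner in each merge.
\begin{itemize}
\item \textbf{The $-e_i$.} On the great circle through $-e_1$ and $-e_i$, the maximum of $f$ along the arc with both coordinates nonpositive is the critical point $-x^{\{1,i\}}$, with value $-\nu_{\{1,i\}}$. So $-e_i$ is joined to $-e_1$ at level $-\nu_{\{1,i\}}$, and by the diagram not earlier.
\item \textbf{The point $x^{\mathbf{1}_D}$.} Since $\nu_{\{1,\dots,D\}}>0>-\nu_{\{1,i\}}$, all the $-e_i$ components have merged by the level $\nu_{\{1,\dots,D\}}$. Hence the only possible partner of $x^{\mathbf{1}_D}$ is this single component, and the merge happens at the death time $\nu_{\{2,\dots,D\}}$.
\end{itemize}

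The main obstacle is the rigorous local-minimum classification in Step 2, specifically showing that no other critical point is a local minimum. Here the odd-degree directions, with vanishing Hessian, must be handled by a higher-order argument rather than the Hessian alone; we would do this by explicit one-parameter curves $x_j=-s$ with a renormalized support. If needed, the Morse computation of \cref{sec:morse_computation} can substitute for this.
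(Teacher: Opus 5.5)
Your architecture matches the paper's. The paper proves this ``similarly to'' \cref{prop:components_superlevel_even}: it reads the index-$0$ points off the fourth point of \cref{lem:critical_points_of_tensor}, bounds each merge level from above by an explicit connection (flow lines from \cref{prop:morse_differential_tensor}), and bounds it from below by the dimension-$0$ row of \cref{thm:persistence_diagram_sublevel_odd}.

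Your two variations are fine and slightly more elementary:
\begin{itemize}
\item The arc in $\text{span}\{e_1,e_i\}$, on which $\max f=-\nu_{\{1,i\}}$ is attained at $-x^{\{1,i\}}$, replaces the flow lines.
\item For $x^{\mathbf{1}_D}$, the observation that only one other component survives at levels $\geq 0$ replaces tracking the descending flow line of $x^{(0,1,\dots,1)}$ toward $-e_1$.
\end{itemize}
For ``not earlier'', you should spell out the elder-rule step. A bar born at $-\nu_{\{i\}}$ and dying at $-\nu_{\{1,i\}}$ means $-e_i$ is the eldest minimum in its component below that level, so $-e_1$ is not in it. This identification of bars with minima tacitly uses distinct birth values, as the paper's argument does too.

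Step 2, however, contains a false claim built on a wrong premise. The Hessian in a missing coordinate direction does not vanish. For $\xi_j=0$ and $\gamma(t)=\cos t\,x^\xi+\sin t\,e_j$ one gets $f(\gamma(t))=f(x^\xi)\cos^k t+\frac{\lambda_j}{k}\sin^k t$, whose second derivative at $0$ is $-k f(x^\xi)\neq 0$. This is the $-I$ term, coming from the sphere's curvature, in the third point of \cref{lem:critical_points_of_tensor}. It is negative at the positive critical points and positive at the negative ones.

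So your first bullet (``not a local minimum unless $E$ is full'') is wrong. It would rule out $-e_i$, contradicting your own correct expansion $f\approx -\lambda_i/k+\frac{\lambda_i}{2}|y|^2$ in the next bullet. Delete it. Your second and third bullets already give the classification, once you make the positivity at $x^{\mathbf{1}_D}$ explicit: with $\mu$ your Step 1 multiplier, the tangential Hessian is $(k-1)\mu-\mu=(k-2)\mu>0$. Alternatively, since $f_{k,D}$ is Morse, the index formula of \cref{lem:critical_points_of_tensor} gives the classification at once: index $0$ iff $\xi\geq 0$ with full support, or $\xi\leq 0$ with a single nonzero entry. The ``main obstacle'' you name does not exist, and no higher-order argument is needed.
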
 

Now, we consider random $f_{k, D}$, where the $\lambda_i$'s are random. Random tensors arise naturally in real data and in statistical physics \citep{aros19_randomTensor,bi21_tensor_in_statistics}. Specifically, we consider the random model where $\lambda_i$'s are eigenvalues of an oversampled / critical Wishart ensemble.

\begin{definition}[Wishart ensemble]\label{def:wishart}
Let $X$ be a $D \times n$ random matrix with independent normally distributed entries $X_{ij}$ with mean $0$ and variance $1$. The Wishart ensemble is defined as the random matrix
\begin{equation*}
W = \frac{1}{n} XX^T.
\end{equation*}
The aspect ratio of the ensemble is defined as $\gamma = n/D$. The ensemble is said to be oversampled, critical, and undersampled if, respectively, $\gamma > 1$, $\gamma = 1$ and $\gamma < 1$.
\end{definition}

To describe the limiting behavior of the Wishart ensemble, we recall the notion of weak convergence.
\begin{definition}\label{def:weak_convergence}
Recall that a sequence of measures $\mu_n$ on a topological space is said to converge weakly to a measure $\mu$ if $\int \varphi d\mu_n \to \int \varphi d\mu$ for every bounded continuous function $\varphi$.
\end{definition}

It is well known that the eigenvalues $\lambda_1 \geq ... \geq \lambda_D > 0$ of an oversampled Wishart ensemble have a limiting distribution (Theorem 3.6 of \citep{bai2010_randomMatrixTheory}), in the sense that the empirical measure $(1/D)\sum_{1 \leq  i \leq D} \delta_{\lambda_i}$ converges weakly to the Marchenko-Pastur distribution, whose density is
\begin{equation}
\rho_{\text{MP}}(\lambda; \gamma) = \frac{\gamma}{2\pi\lambda} \sqrt{(\lambda_+ - \lambda)(\lambda - \lambda_-)}, \quad \lambda \in [\lambda_-, \lambda_+],
\end{equation}
where $\lambda_\pm = (1 \pm 1/\sqrt{\gamma})^2$.

We consider the persistence diagram of the sublevel filtration at dimension 0. Since all birth times are the same ($\nu_{\{1, ..., D\}}$), we consider only the persistence (death time minus birth time, see \cref{def:persistence_diagram}).

\begin{proposition}[Law of Large Numbers for the Sublevel Persistence at Dimension 0 for Even $k$]\label{prop:LLN_sublevel_persistence_dim_0_even}
Let $k \geq 4$ be an even integer. Let $D \geq 2$ be an integer. Assume $\lambda_1, ..., \lambda_D$ are eigenvalues of an oversampled Wishart Ensemble with aspect ratio $\gamma > 1$. Denote by $\mathcal{D}_0^\text{finite}$ the persistence diagram of the sublevel filtration of $f_{k, D}$ (with $\lambda_i$'s as random eigenvalues as assumed) at dimension 0, with points with infinite endpoints dropped. Consider the scaled empirical measure of the persistence of points in this diagram
$$\mathcal{L}_D = \frac{1}{2^D} \sum_{(b, d) \in \mathcal{D}_0^\text{finite}} \delta_{D^{k/2}(d - b)}.$$
Then $\mathcal{L}_D$ weakly converges to the Dirac delta measure at $\frac{k-2}{2k} I_k^{-\frac{k}{2}} \lambda_+^{-\frac{2}{k-2}}$, where
$I_k = \int_{\lambda_-}^{\lambda_+} \lambda^{-\frac{2}{k-2}}\rho_{\text{MP}}(\lambda) d\lambda$.
\end{proposition}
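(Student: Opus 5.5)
The plan is to apply \cref{thm:persistence_diagram_sublevel_even} directly and then run an asymptotic analysis. By that theorem, the finite points of $\mathcal{D}_0^\text{finite}$ are $(\nu_{\{1,\dots,D\}}, \nu_{\{1,\dots,D\}-\{i\}})$ for $1 \le i \le D$, with multiplicity $2^{D-i}$. Hence
$$\mathcal{L}_D = \sum_{1 \le i \le D} 2^{-i}\, \delta_{x_i^{(D)}}, \qquad x_i^{(D)} = D^{k/2}\bigl(\nu_{\{1,\dots,D\}-\{i\}} - \nu_{\{1,\dots,D\}}\bigr).$$
Its total mass is $1 - 2^{-D} \to 1$, and it is dominated by the first few indices $i$. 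So it suffices to show that each fixed $i$ satisfies $x_i^{(D)} \to c := \frac{k-2}{2k} I_k^{-k/2}\lambda_+^{-2/(k-2)}$ almost surely. Then for bounded continuous $\varphi$ and any fixed $M$,
$$\Bigl|\int \varphi \, d\mathcal{L}_D - \varphi(c)\Bigr| \le \sum_{i \le M} 2^{-i}\,\bigl|\varphi(x_i^{(D)}) - \varphi(c)\bigr| + 3\|\varphi\|_\infty 2^{-M},$$
and letting $D \to \infty$ and then $M \to \infty$ gives weak convergence almost surely.

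For the pointwise limit, set $p = 2/(k-2)$ and $S_D = \sum_{j \le D} \lambda_j^{-p}$, so that $\nu_E = \frac1k S_E^{-1/p}$ with $1/p = (k-2)/2$. By the mean value theorem applied to $s \mapsto s^{-1/p}$,
$$\nu_{\{1,\dots,D\}-\{i\}} - \nu_{\{1,\dots,D\}} = \frac{1}{kp}\, \lambda_i^{-p}\, \sigma^{-1/p-1}$$
for some $\sigma \in [S_D - \lambda_i^{-p}, S_D]$, where $1/p + 1 = k/2$. Hence
$$x_i^{(D)} = \frac{k-2}{2k}\, \lambda_i^{-p}\, (\sigma/D)^{-k/2}.$$
Two limits remain.
\begin{enumerate}
\item $S_D/D \to I_k$ almost surely. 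Since $\gamma > 1$, the Bai--Yin theorem gives $\lambda_D \to \lambda_- > 0$ and $\lambda_1 \to \lambda_+$ almost surely. So eventually all eigenvalues lie in a compact subinterval of $(0,\infty)$, on which $\lambda \mapsto \lambda^{-p}$ agrees with a bounded continuous function. Almost sure weak convergence of the empirical spectral measure to $\rho_{\text{MP}}$ then gives $S_D/D \to I_k$. Since $\lambda_i^{-p}/D \to 0$, the same holds for $\sigma/D$.
\item $\lambda_i \to \lambda_+$ for each fixed $i$. We have $\lambda_i \le \lambda_1 \to \lambda_+$. For the lower bound, the Marchenko--Pastur law puts positive mass in $[\lambda_+ - \epsilon, \lambda_+]$, so a number of eigenvalues proportional to $D$ lies there eventually; in particular at least $i$ of them do, so $\lambda_i \ge \lambda_+ - \epsilon$.
\end{enumerate}
Combining the two limits gives $x_i^{(D)} \to \frac{k-2}{2k} I_k^{-k/2}\lambda_+^{-p} = c$ almost surely.

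The main obstacle is step 1: the test function $\lambda^{-p}$ is unbounded near $0$, so weak convergence alone does not suffice. It has to be supplemented by the almost sure lower edge bound $\lambda_D \to \lambda_- > 0$, and this is exactly where oversampling $\gamma > 1$ is used. If the intended mode of convergence is only "in probability", the same argument works with convergence in probability throughout, using a subsequence argument.
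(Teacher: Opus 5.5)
Your proposal is correct and follows the paper's proof. Both read off the dimension-$0$ diagram from \cref{thm:persistence_diagram_sublevel_even}, show $D^{k/2}(\nu_{\{1,\dots,D\}-\{i\}}-\nu_{\{1,\dots,D\}}) = \frac{k-2}{2k}I_k^{-k/2}\lambda_i^{-2/(k-2)}+o(1)$, and truncate the geometric weights $2^{-i}$ using $\lambda_i \to \lambda_+$ for each fixed $i$. The differences are minor: your mean value theorem computation replaces the paper's Taylor expansion, and your use of the edge limit $\lambda_D \to \lambda_- > 0$ (needed for the unbounded test function $\lambda^{-2/(k-2)}$) and of the positive Marchenko--Pastur mass near $\lambda_+$ supplies justifications the paper leaves implicit.
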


This proposition is proven in \cref{sec:probabilistic_results}.

\begin{proposition}[Law of Large Numbers for the Superlevel Persistence Diagram at Dimension 0 for Even $k$]\label{prop:LLN_superlevel_persistence_dim_0_even}
Let $k \geq 4$ be an even integer. Let $D \geq 2$ be an integer. Assume $\lambda_1, ..., \lambda_D$ are eigenvalues of an oversampled Wishart Ensemble with aspect ratio $\gamma > 1$. Denote by $\mathcal{D}_0^\text{finite}$ the persistence diagram of the superlevel filtration of $f_{k, D}$ (with $\lambda_i$'s as random eigenvalues as assumed) at dimension 0, with points with infinite coordinates dropped. Consider the scaled empirical measure of the points in this diagram
$$\mathcal{L}_D = \frac{1}{D} \sum_{(d, b) \in \mathcal{D}_0^\text{finite}} \delta_{(d, b)}.$$
Note that $\mathcal{L}_D$ is a measure on $\mathbb{R}^2$.
Denote by $\nu^* \rho_{MP}(\lambda) d\lambda$ the pushforward measure of $\rho_{MP}(\lambda) d\lambda$ on the curve $\lambda \mapsto \left(\frac{1}{k} \left(\lambda^{-\frac{2}{k-2}} + \lambda_+^{-\frac{2}{k-2}}\right)^{-\frac{k-2}{2}}, \lambda/k \right)$, where $\lambda \in [\lambda_-, \lambda_+]$. Note that $\nu^* \rho_{MP}(\lambda) d\lambda$ is singular with respect to the Lebesgue measure on $\mathbb{R}^2$.
Then $\mathcal{L}_D$ weakly converges to $2 \nu^* \rho_{MP}(\lambda) d\lambda$.
\end{proposition}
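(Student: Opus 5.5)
By \cref{cor:persistence_diagram_superlevel_even}, the finite points of the dimension-$0$ superlevel diagram are $(\nu_{\{1,i\}}, \nu_{\{i\}})$ with multiplicity $2$ for each $2 \le i \le D$, together with one extra copy of $(\nu_{\{1,2\}}, \nu_{\{1\}})$. The plan is to show that, after discarding the single exceptional point, $\mathcal{L}_D$ is asymptotically the pushforward of the empirical spectral measure under a fixed continuous curve.

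We write
\[
\mathcal{L}_D = \frac{2}{D}\sum_{2 \le i \le D} \delta_{(\nu_{\{1,i\}}, \lambda_i/k)} + \frac{1}{D}\delta_{(\nu_{\{1,2\}}, \nu_{\{1\}})}.
\]
For a bounded continuous test function $\varphi$, the last term contributes at most $\|\varphi\|_\infty/D \to 0$. Adding back the $i=1$ term to the sum costs another $O(1/D)$. Set $\alpha = \frac{2}{k-2}$ and define
\[
g_s(\lambda) = \left(\tfrac1k(\lambda^{-\alpha}+s^{-\alpha})^{-1/\alpha}, \lambda/k\right),
\]
where $-(k-2)/2 = -1/\alpha$. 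Since $\nu_{\{1,i\}} = \frac1k(\lambda_i^{-\alpha} + \lambda_1^{-\alpha})^{-1/\alpha}$, the main term is
\[
2\int \varphi\circ g_{\lambda_1}\, d\mu_D, \qquad \mu_D = \frac1D\sum_i \delta_{\lambda_i}.
\]
We must show this converges to $2\int \varphi \circ g_{\lambda_+}\,\rho_{MP}\,d\lambda$.

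This splits into two steps.

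\textbf{Step 1: replace $\lambda_1$ by $\lambda_+$.} It is standard (the Bai--Yin type result, e.g.\ in \citep{bai2010_randomMatrixTheory}) that $\lambda_1 \to \lambda_+$ almost surely, and in particular in probability. Also $\lambda_D \to \lambda_- > 0$ since $\gamma>1$. Hence, with probability tending to one, all $\lambda_i$ and $\lambda_1$ lie in a compact set $K \subset (0,\infty)$, say $[\lambda_-/2, 2\lambda_+]$. The map $(\lambda,s)\mapsto g_s(\lambda)$ is uniformly continuous on $K\times K$. However, $\varphi$ is only bounded continuous, so we cannot transfer uniform continuity to $\varphi\circ g$ directly. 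The fix is that $g(K\times K)$ is compact, so $\varphi$ is uniformly continuous there. It follows that $\sup_{\lambda\in K}|\varphi(g_{\lambda_1}(\lambda)) - \varphi(g_{\lambda_+}(\lambda))| \to 0$ in probability.

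\textbf{Step 2: weak convergence of $\mu_D$.} The function $\psi = \varphi\circ g_{\lambda_+}$ is bounded and continuous on $(0,\infty)$. The Marchenko--Pastur theorem cited above gives $\int \psi\, d\mu_D \to \int \psi \rho_{MP}\,d\lambda$ almost surely, since it holds for bounded continuous $\psi$. If needed, we extend $\psi$ continuously and boundedly to $\mathbb{R}$, which is harmless because all eigenvalues are positive.

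Combining the two steps, $\int\varphi\,d\mathcal{L}_D \to 2\int \varphi\, d(\nu^*\rho_{MP})$ in probability, which also holds almost surely along the full sequence. The mode of convergence should be stated to match the paper's convention for random measures. Weak convergence of random measures follows either by using a countable convergence-determining class of test functions or by interpreting the statement as convergence in probability for each $\varphi$.

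The main obstacle is Step 1, namely controlling the dependence on the random edge eigenvalue $\lambda_1$. This requires both almost-sure edge convergence $\lambda_1\to\lambda_+$ and a lower bound on $\lambda_D$ keeping the spectrum away from $0$, where $\lambda^{-\alpha}$ blows up. Both edge facts must be imported as known results, because weak convergence of $\mu_D$ alone says nothing about the extreme eigenvalue. Everything else is bookkeeping with the explicit diagram from \cref{cor:persistence_diagram_superlevel_even}.
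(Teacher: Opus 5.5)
Your proposal is correct and follows the paper's proof essentially step for step. Both drop the exceptional point $(\nu_{\{1,2\}},\nu_{\{1\}})$, add back the $i=1$ term at cost $O(1/D)$, replace $\lambda_1$ by $\lambda_+$ using edge convergence and uniform continuity on a compact box, and then apply Marchenko--Pastur weak convergence. Your added care about the uniform continuity of $\varphi\circ g$ and about the lower spectral edge spells out what the paper leaves implicit in its choice of the box $[\frac12\lambda_-,2\lambda_+]^2$. The lower-edge input is actually dispensable: with $\alpha=\frac{2}{k-2}$ one has $(\lambda^{-\alpha}+s^{-\alpha})^{-1/\alpha}\le\min(\lambda,s)$, so the curve extends continuously to $\lambda=0$.
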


This proposition is also proven in \cref{sec:probabilistic_results}.

\begin{remark}
Observing the similarity between the first rows of the middle and bottom blocks of \cref{tab:persistence_diagrams_fkD}, the Laws of Large Numbers for the super- and sub-level persistence diagrams at dimension 0 for \emph{odd} $k$ are the same as the one above in \cref{prop:LLN_superlevel_persistence_dim_0_even}, except that the weak limits are $\nu_* \rho_{MP}(\lambda) d\lambda$ and $(-\nu)_* \rho_{MP}(\lambda) d\lambda$ respectively, where $-\nu$ denotes the transformation of $\lambda$ in \cref{prop:LLN_superlevel_persistence_dim_0_even} post-composed by $(x, y) \mapsto (-y, -x)$.
\end{remark}

\section{Numerical Computations and Simulations}
\label{sec:simulations}

\subsection{Setup and Figure Layout}
We illustrate our main results with numerical computations and simulations. For deterministic persistent diagrams \cref{thm:persistence_diagram_sublevel_even,thm:persistence_diagram_sublevel_odd}, and
\cref{cor:persistence_diagram_superlevel_even}, we consider the cases when the ambient dimension is $D = 9$ and the tensor orders are $k = 3$ and $k = 4$. We consider several choices of $\lambda_i$'s:
\begin{description}
\item[Exponential] $\lambda_i = 0.9^i$
\item[Linear] $\lambda_i = 1 - i/D$
\item[Wishart] $\lambda_i$'s are eigenvalues of an instance of the Wishart ensemble (the random matrix defined in \cref{def:wishart}) with $\gamma = 2$
\item[Cliff]$\lambda_1 = 1$, $\lambda_2 = 0.99$, $\lambda_3 = 0.98$ (here comes the cliff), $\lambda_4 = 0.1$, $\lambda_5 = 0.19$, $\lambda_6 = 0.18$, ..., $\lambda_9 = 0.15$.
\end{description}
The exponential and linear cases illustrate the situation with smoothly decaying coefficients. The third choice illustrates a the behavior of random coefficients. The cliff case illustrates the situation when a few (in this case, the first three) coefficients dominate.

For $k = 3$, their persistence diagrams are shown in \cref{fig:persistence_digrams_across_dimensions_sublevel_odd} (sublevel) and \cref{fig:persistence_digrams_across_dimensions_superlevel_odd} (superlevel). For $k = 4$, their persistence diagrams are shown in \cref{fig:persistence_digrams_across_dimensions_sublevel_even} (sublevel) and \cref{fig:persistence_digrams_across_dimensions_superlevel_even} (superlevel).

\subsection{Observations}
\label{sec:numerical_observations}

\paragraph{Duality} From these figure, we observe duality manifesting in two ways.

\begin{description}
\item[Duality between sub- and super-level filtration] The superlevel persistence diagrams and sublevel persistence diagrams are mirror images of each other. For instance, the blue dots (homological dimension $q = 0$) in the first column of \cref{fig:persistence_digrams_across_dimensions_sublevel_odd} are the same as the orange dots (homological dimension $q = 8$) in the last column of \cref{fig:persistence_digrams_across_dimensions_superlevel_odd}. This is a simple corollary of Alexander duality (\cref{thm:alexander_duality}).
\item[Duality for odd tensor order] In \cref{fig:persistence_digrams_across_dimensions_sublevel_odd}, the persistent diagrams are mirror images across dimensions as well. For instance The blue dots (homological dimension $q = 0$) in the first column is the same as the orange dots (homological dimension $q = 8$) in the last column with a sign flip. This is due to the oddity of $f_{k,D}$ for odd $k$ and Alexander duality (\cref{thm:alexander_duality}).
\end{description}

\paragraph{Short persistence} We also observe that most points lie very close to the diagonal (the line where birth = death), especially for interior dimensions (far from both 0 and $D-1$). This is because each point in the persistence diagrams are of the form $(\nu_E, \nu_{E - \{i\}})$ (or reversed) for some set $E$ and some $i \in E$, so the persistence is smaller whenever $i$'s contribution to $\nu_E$ is not big.

\begin{figure}[t]
\centering
\includegraphics[width = 0.9\linewidth]{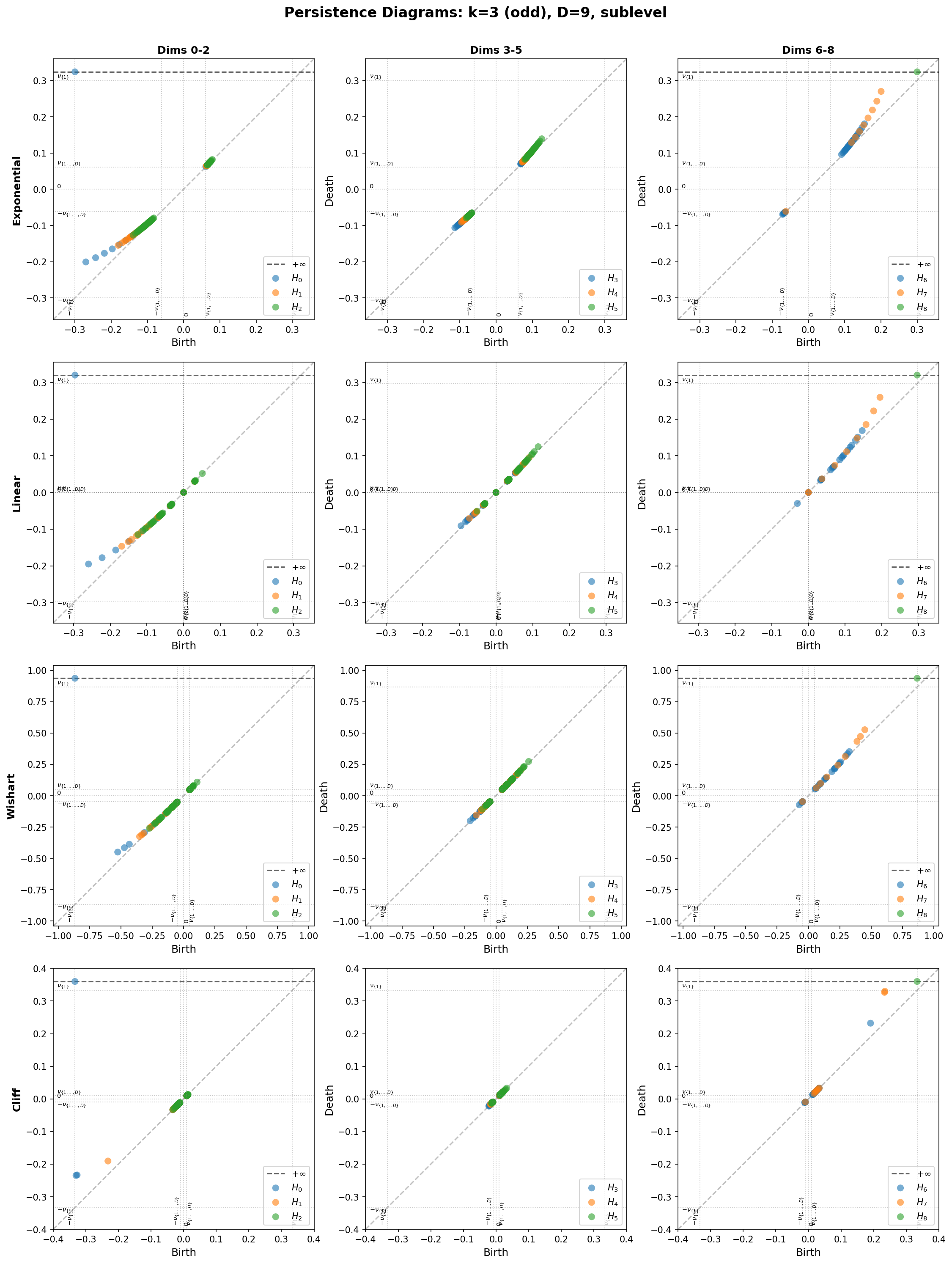}
\caption{Persistence diagrams of the persistent homology $H_q(X_t; \mathbb{Z}/2\mathbb{Z})$ of $f_{k,D}$'s, with $k = \mathbf{3}$, \textbf{sublevel} filtrations $X_t$ with coefficients in $\mathbb{Z}/2\mathbb{Z}$, where $D = 9$ and the coefficients $\lambda_i$'s are specified in \cref{sec:simulations}. The first columns correspond to the persistence diagrams at homological dimensions $0-2$, the other columns, $3-5$ and $6-8$. See \cref{sec:persistent_homology,sec:homological_algebra} for definitions.}
\label{fig:persistence_digrams_across_dimensions_sublevel_odd}
\end{figure}

\begin{figure}[t]
\centering
\includegraphics[width = 0.9\linewidth]{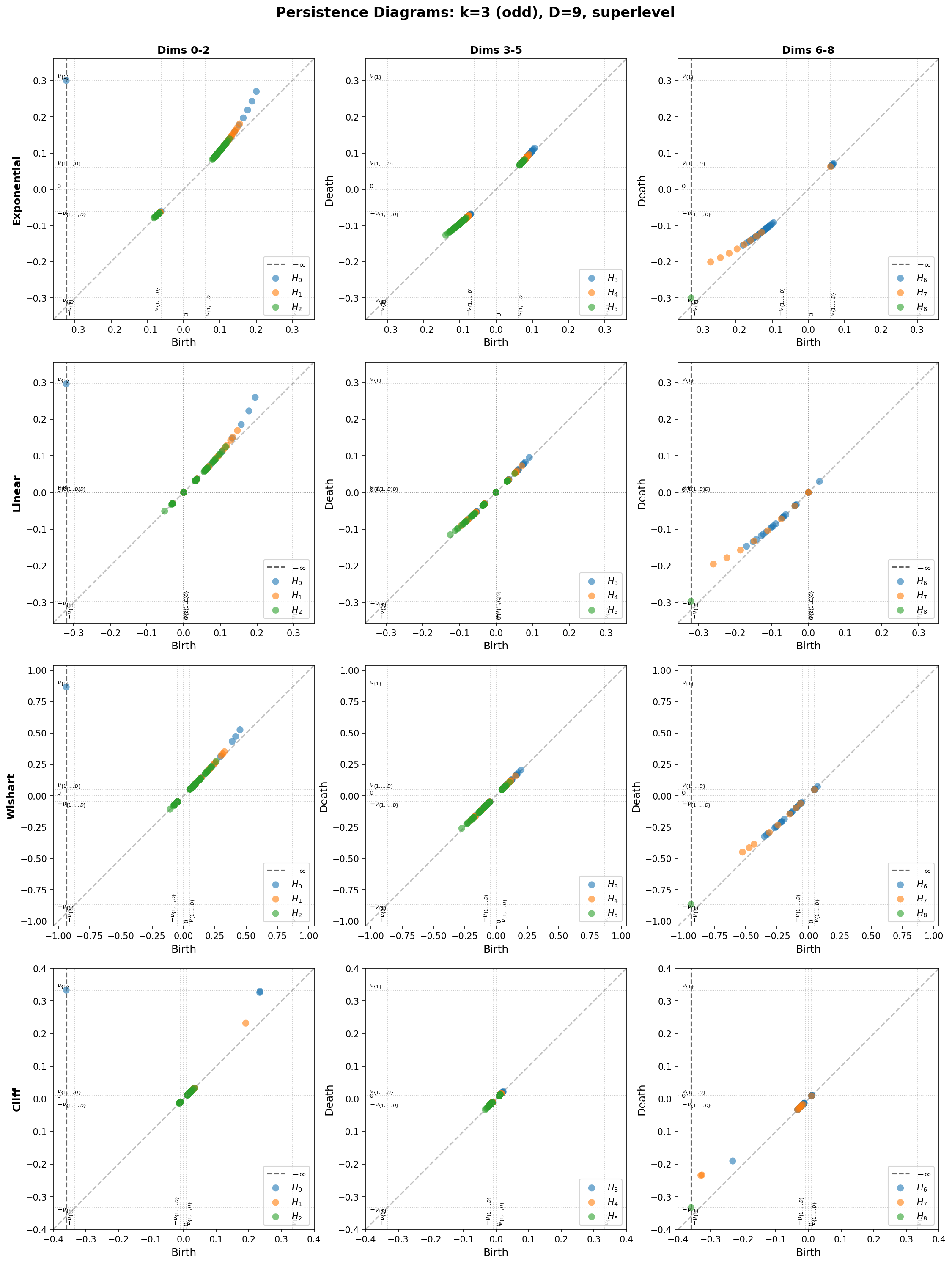}
\caption{Persistence diagrams of the persistent homology $H_q(X_t; \mathbb{Z}/2\mathbb{Z})$ of $f_{k,D}$'s, with $k = \mathbf{3}$, \textbf{superlevel} filtrations $X_t$ with coefficients in $\mathbb{Z}/2\mathbb{Z}$, where $D = 9$ and the coefficients $\lambda_i$'s are specified in \cref{sec:simulations}. The first columns correspond to the persistence diagrams at homological dimensions $0-2$, the other columns, $3-5$ and $6-8$. See \cref{sec:persistent_homology,sec:homological_algebra} for definitions.}
\label{fig:persistence_digrams_across_dimensions_superlevel_odd}
\end{figure}

\begin{figure}[t]
\centering
\includegraphics[width = 0.9\linewidth]{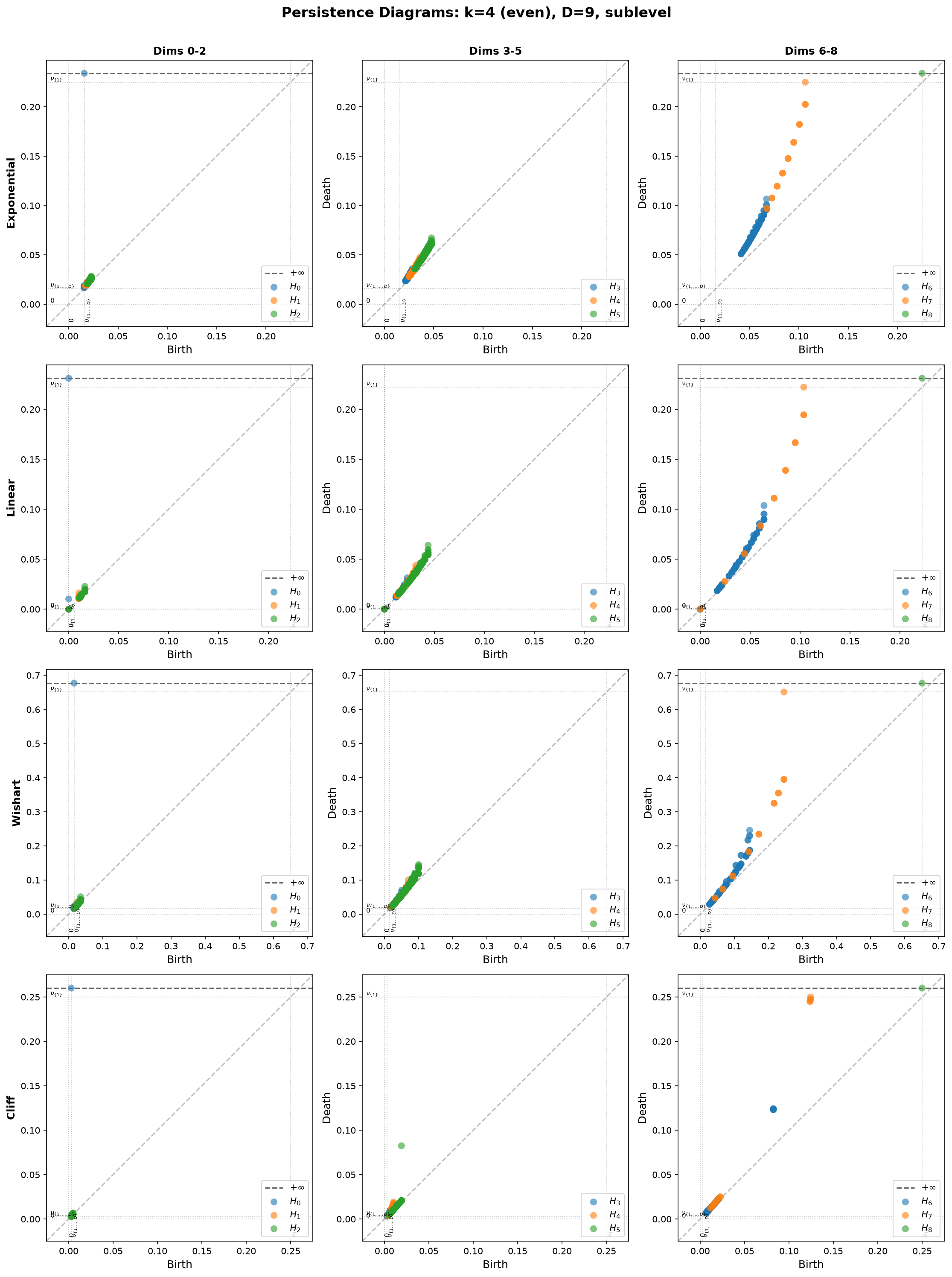}
\caption{Persistence diagrams of the persistent homology $H_q(X_t; \mathbb{Z}/2\mathbb{Z})$ of $f_{k,D}$'s, with $k = \mathbf{4}$, \textbf{sublevel} filtrations $X_t$ with coefficients in $\mathbb{Z}/2\mathbb{Z}$, where $D = 9$ and the coefficients $\lambda_i$'s are specified in \cref{sec:simulations}. The first columns correspond to the persistence diagrams at homological dimensions $0-2$, the other columns, $3-5$ and $6-8$. See \cref{sec:persistent_homology,sec:homological_algebra} for definitions.}
\label{fig:persistence_digrams_across_dimensions_sublevel_even}
\end{figure}

\begin{figure}[t]
\centering
\includegraphics[width = 0.9\linewidth]{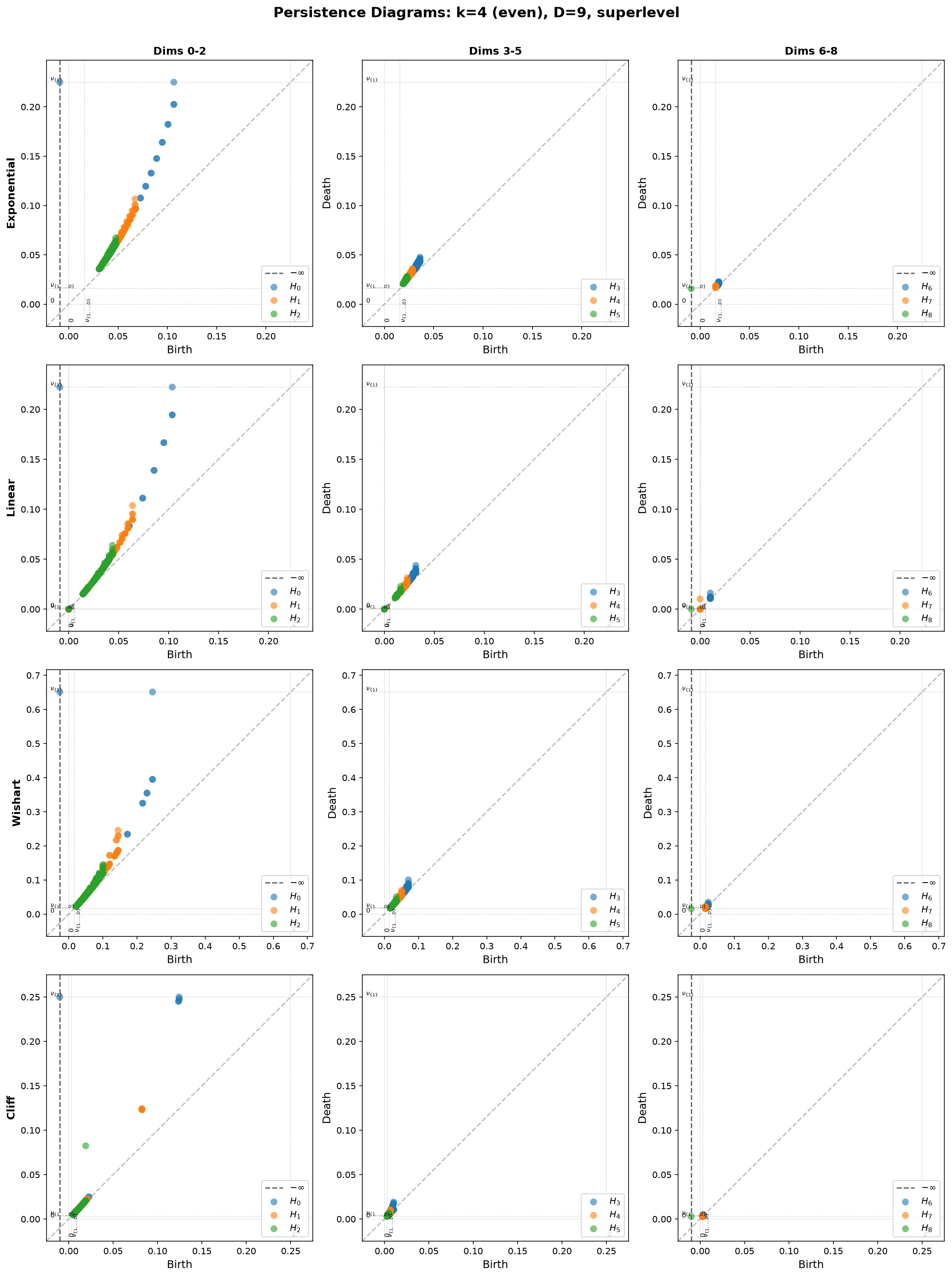}
\caption{Persistence diagrams of the persistent homology $H_q(X_t; \mathbb{Z}/2\mathbb{Z})$ of $f_{k,D}$'s, with $k = \mathbf{4}$, \textbf{superlevel} filtrations $X_t$ with coefficients in $\mathbb{Z}/2\mathbb{Z}$, where $D = 9$ and the coefficients $\lambda_i$'s are specified in \cref{sec:simulations}. The first columns correspond to the persistence diagrams at homological dimensions $0-2$, the other columns, $3-5$ and $6-8$. See \cref{sec:persistent_homology,sec:homological_algebra} for definitions.}
\label{fig:persistence_digrams_across_dimensions_superlevel_even}
\end{figure}

\subsection{Simulations for Probabilistic Results}

To illustrate our probabilistic result for the sublevel filtration \cref{prop:LLN_sublevel_persistence_dim_0_even}, we consider the case where the tensor order is $k = 4$, the coefficients $\lambda_i$'s are Wishart eigenvalues with $\gamma = 2$, and the homological dimension is $q = 0$. On the left panel of \cref{fig:persistence_digrams_random}, we show the cumulative distributions of the scaled persistences and the limiting distribution for the sublevel filtration with the ambient dimensions $D$ increasing from 10 to 160. On the right panel, we show the empirical persistence diagram and its limiting curve with ambient dimension $D = 80$.

\begin{figure}[h]
\centering
\includegraphics[width = 0.45\linewidth]{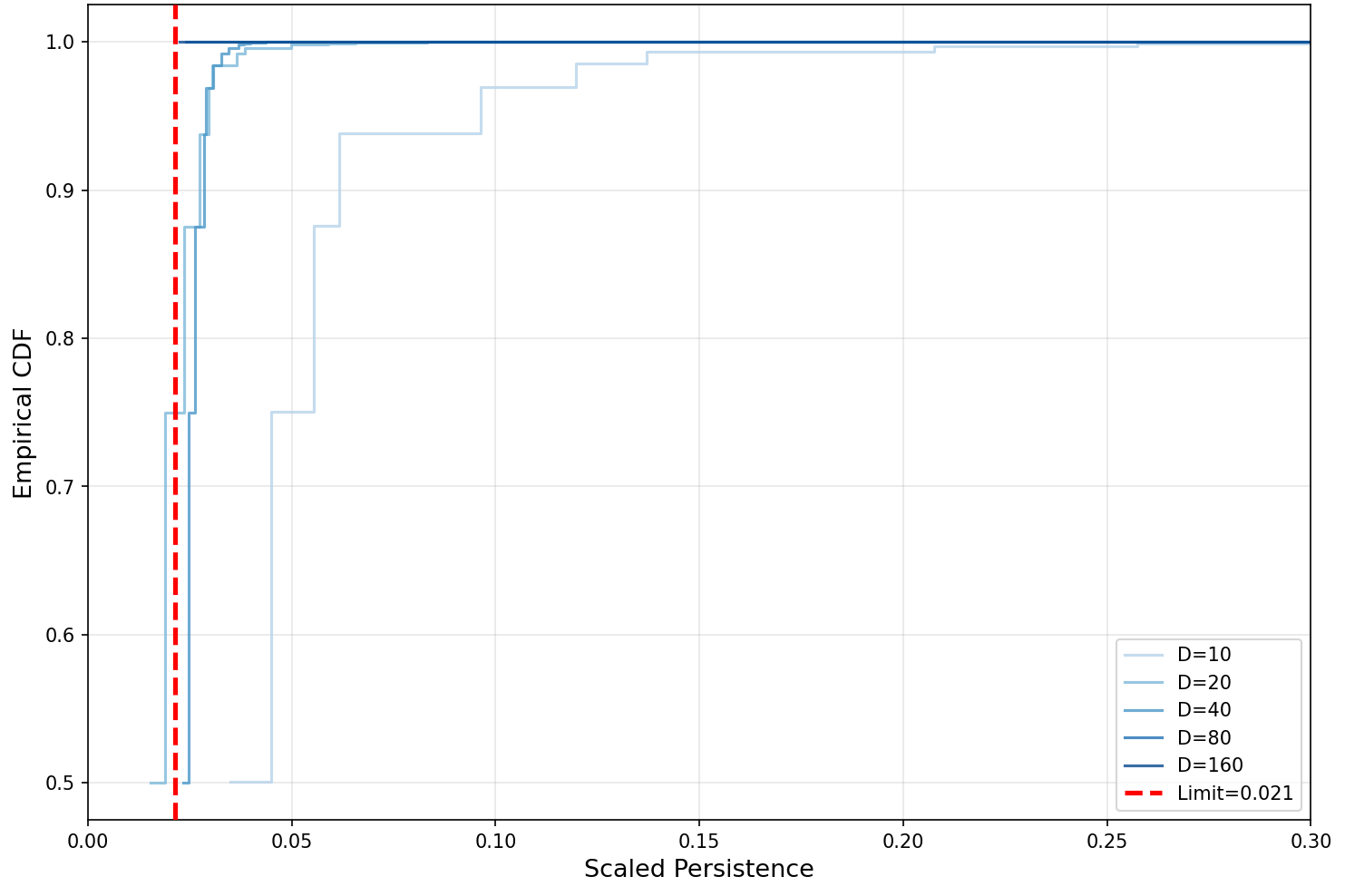}
\includegraphics[width = 0.45\linewidth]{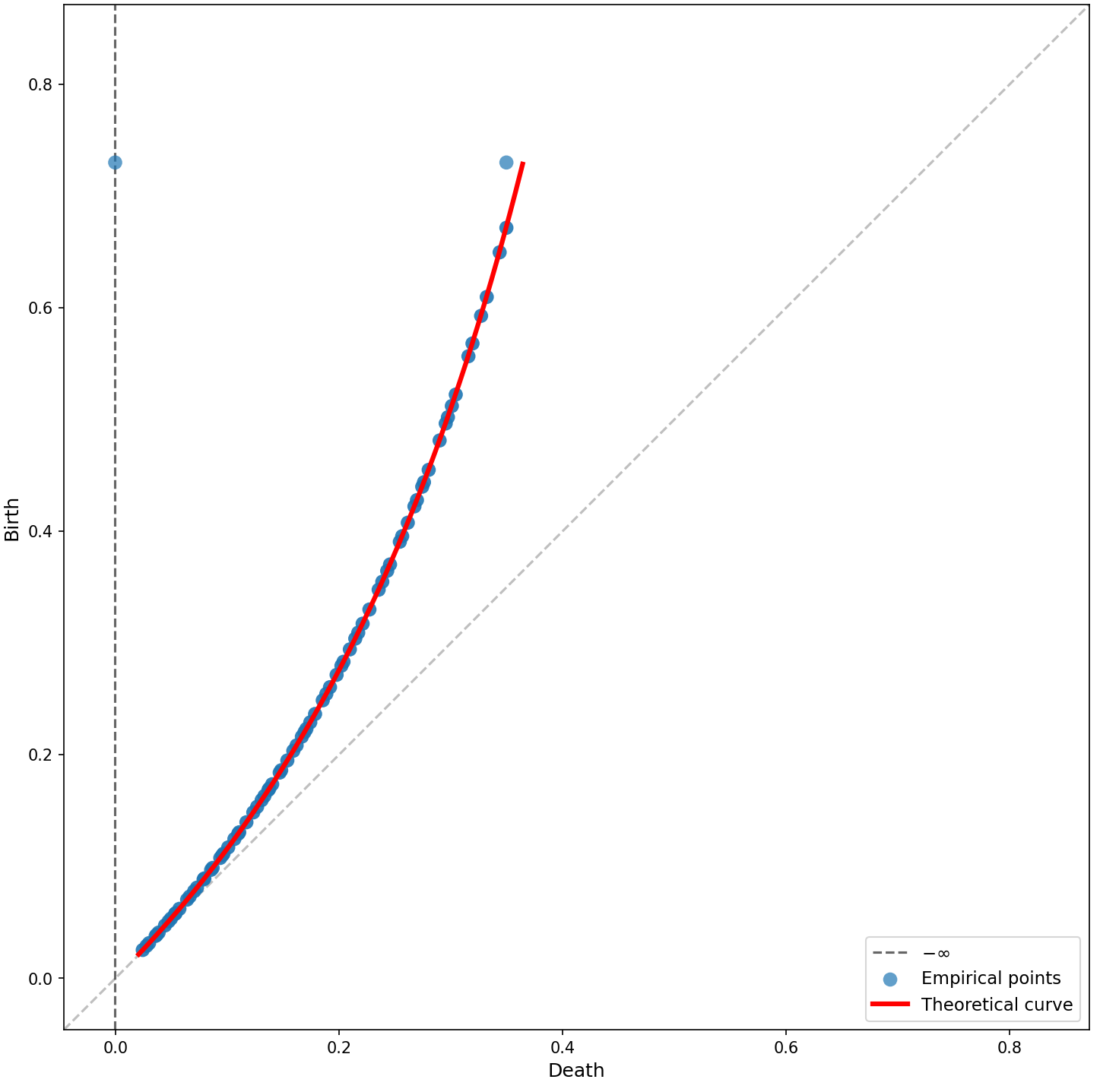}
\caption{Convergence of persistent homology of $f_{k,D}$'s, where $k = \mathbf{4}$ and the coefficients $\lambda_i$'s are Wishart eigenvalues with $\gamma = 2$. On the left panel, we show the convergence of the cumulative distribution functions of scaled persistence as the ambient dimension $D$ increases from $10$ to $160$. On the left, we compare the persistence diagram with ambient dimension $D = 80$ with the theoretical limit.}
\label{fig:persistence_digrams_random}
\end{figure}

\section{Preliminaries}
\label{sec:preliminaries}

In this section, we review major definitions and results from persistent homology, Morse theory, and computational topology to make precise our main results and to prepare for their proofs. We defer a review on homological algebra and algebraic topology (e.g. chain complex, commutative diagram, homology, cohomology, and reduced (co-) homology) to the \cref{sec:homological_algebra}.

\subsection{Persistent Homology}\label{sec:persistent_homology}
We follow the exposition in \citep{chazal16_persistenceModules}. See also Chapter 3.3 of \citep{siu24_thesis}.

\begin{definition}[Filtration]
A filtration is a family $(X_t)_{t \in T}$ of topological spaces, where $t$ ranges over a subset $T$ of $\mathbb{R}$ (possibly the whole of $\mathbb{R}$), such that $X_s \subseteq X_t$ whenever $s, t \in T$ and $s \leq t$.
\end{definition}

\begin{example}[Sublevel Filtration]
Given a continuous function $f: X \to \mathbb{R}$ on a topological space $X$, its \emph{sublevel filtration} consists of the \emph{sublevel sets}
$$X_t = f^{-1}(-\infty, t] = \{x : f(x) \leq t\}$$
of $f$.
\end{example}

We will define persistent homology in terms of a persistence module.
\begin{definition}[Persistence Module; p.15 and p.17 of \citep{chazal16_persistenceModules}]
A family of vector spaces $(V_t)_{t \in T}$ parametrized over a subset $T$ of $\mathbb{R}$ with maps $\{L_{st}: V_s \to V_t | s, t \in T, s \leq t\}$ is said to be a persistence module if
\begin{itemize}
\item $L_{ss}$ is the identity map on $V_s$ for every $s$ and
\item $L_{rt} = L_{st}L_{rs}$ whenever $r < s < t$.
\end{itemize}
\end{definition}

\begin{definition}[Persistent Homology]
Let $q \geq 0$ and $\mathbb{F}$ be a field. The $q^{th}$ persistent homology of a filtration $(X_t)$ is the persistence module where $V_t = H_q(X_t; \mathbb{F})$, i.e. $V_t$ is the homology of $X_t$ with coefficients in $\mathbb{F}$ at dimension $q$ (defined in \cref{def:singular_homology}), and $L_{st}$ is the linear map induced by the inclusion map $X_s \to X_t$.
\end{definition}

We are now in the position to define the persistence diagrams of \emph{interval-decomposable} persistence modules, which includes the persistent homology of the sublevel filtrations of Morse functions on compact manifolds (Cf. \cref{prop:persistent_homology_of_morse_function_is_interval_decomposable}), and hence that of $f_{k,D}$ (Cf. the last point of \cref{lem:critical_points_of_tensor}).

Fix a field $\mathbb{F}$ and a subset $T$ of $\mathbb{R}$. A subset $J$ of $T$ is said to be an \emph{interval} if for every triple $r \leq s \leq t$ in $T$, $s \in J$ whenever $r, t \in J$. For an interval $J$, let 
$$
V(J)_t = \begin{cases} \mathbb{F} &\text{ if } t \in J\\
0 &\text{ otherwise,}
\end{cases}
\qquad \text{ and } \qquad
L(J)_{st} = \begin{cases} \text{identity on }\mathbb{F} & kj\text{ if } s, t \in J\\
0 &\text{ otherwise.}
\end{cases}
$$
The \emph{$J$-interval module} is the persistence module with vector spaces $V(J)_t$'s and linear maps $L(J)_{st}$'s.

Let $(V^i_t)$ be a family of persistence modules indexed by $i$ with maps $L^i_{st}$. The \emph{direct sum} $(\Oplus_i V^i_t)$ is the persistence module with vector spaces $(\Oplus_i V^i)_t$ and maps $\Oplus_i L^i_{st}$.

\begin{definition}[Interval Decomposition and Persistence Diagram; Section 2.5 -- 2.6 of \citep{chazal16_persistenceModules}]\label{def:persistence_diagram}
A persistence module $(V_t)$ is said to admit an \emph{interval decomposition} if it is isomorphic to the direct sum of interval modules $J_i$'s. In this case, the persistence diagram of $(V_t)$ is the multiset
$$\mathcal{D} = \{(\inf J_i, \sup J_i) \in [-\infty, \infty]^2: \inf J_i < \sup J_i\}.$$
A point in this multiset is called a persistence point (also known as homological feature). The coordinates of a persistence point $(b, d)$ are called its birth time ($b$) and the death time ($d$), and its persistence is defined as $d - b$.
\end{definition}
\begin{remark} $\quad$
\begin{enumerate}
\item A multiset is a set that can contain duplicate elements, and the duplicity is encoded by the multiplicity of the element, e.g. $\{0, 0, 1\}$ and $\{0, 1\}$ are different as multisets, because the multiplicity of $0$ in $\{0, 0, 1\}$ is 2. This can be formalized by defining a multiset as a set equipped with a multiplicity function, whose codomain is $\mathbb{N}_0 \cup \{+\infty\}$.
\item The Krull-Remak-Schmidt-Azumaya theorem (Theorem 2.7 of \citep{chazal16_persistenceModules}) guarantees that the persistence diagrams of interval-decomposable modules are well-defined.
\item For simplicity, we refer to the persistence diagrams of the persistent homology of the sublevel filtration of $f$ as the persistence diagrams of the sublevel filtration of $f$.
\end{enumerate}
\end{remark}

\begin{example}
Consider the following persistence module indexed by $\{1, 2, 3, 4, 5, 6\}$:

\begin{tikzcd}[ampersand replacement=\&]
V_1 \arrow[d, Rightarrow, no head]                         \& V_2 \arrow[d, Rightarrow, no head]                                   \& V_3 \arrow[d, Rightarrow, no head]                           \& V_4 \arrow[d, Rightarrow, no head]                  \& V_5 \arrow[d, Rightarrow, no head]                  \& V_6 \arrow[d, Rightarrow, no head]                  \\
\text{span }\{u_1\} \arrow[d, Rightarrow, no head]         \& {\text{span } \{v_1, v_2\}} \arrow[d, Rightarrow, no head]           \& {\text{span } \{w_1, w_2\}} \arrow[d, Rightarrow, no head]   \& \text{span } \{x_1\} \arrow[d, Rightarrow, no head] \& \text{span } \{y_3\} \arrow[d, Rightarrow, no head] \& \text{span } \{z_3\} \arrow[d, Rightarrow, no head] \\
\mathbb{F} \arrow[r, "\begin{bmatrix} 1\\0 \end{bmatrix}"] \& \mathbb{F}^2 \arrow[r, "{\begin{bmatrix} 1&0\\0&1 \end{bmatrix}}"] \& \mathbb{F}^2 \arrow[r, "{\begin{bmatrix} 1&0 \end{bmatrix}}"] \& \mathbb{F} \arrow[r, "0"]                           \& \mathbb{F} \arrow[r, "1"]                           \& \mathbb{F}                                          \\
u_1 \arrow[r, maps to]                                     \& v_1 \arrow[r, maps to]                                               \& w_1 \arrow[r, maps to]                                       \& x_1 \arrow[r, maps to]                              \& 0                                                   \&                                                     \\
                                                           \& v_2 \arrow[r, maps to]                                               \& w_2 \arrow[r, maps to]                                       \& 0                                                   \&                                                     \&                                                     \\
                                                           \&                                                                      \&                                                              \&                                                     \& y_3 \arrow[r, maps to]                              \& z_3                                                
\end{tikzcd}

Its persistence diagram is $\{(1, 4), (2, 3), (5, 6)\}$. The birth time of the point $(2, 3)$ is 2, and its death time is 3. Its persistence is 1.
\end{example}

We conclude with the discussion of filtrations and persistence modules that go in reverse direction. Such filtrations and persistence modules conform to the standard definitions above once we replace the parameter $t$ with $-t$, but we refrain from doing that for easier comparison between different filtrations that share the same parametrization. Respecting the direction of the reversed persistence module, the \emph{second} coordinate is called the birth time, and first is called the death time. The persistence is still their absolute difference.

\begin{example}
Consider the following reversed persistence module indexed by $\{1, 2, 3, 4, 5, 6\}$:
$$
\begin{tikzcd}[ampersand replacement=\&]
V_1 \arrow[d, Rightarrow, no head]                         \& V_2 \arrow[d, Rightarrow, no head]                                   \& V_3 \arrow[d, Rightarrow, no head]                           \& V_4 \arrow[d, Rightarrow, no head]                  \& V_5 \arrow[d, Rightarrow, no head]                  \& V_6 \arrow[d, Rightarrow, no head]                  \\
\text{span }\{u_1\} \arrow[d, Rightarrow, no head]         \& {\text{span } \{v_1, v_2\}} \arrow[d, Rightarrow, no head]           \& {\text{span } \{w_1, w_2\}} \arrow[d, Rightarrow, no head]   \& \text{span } \{x_1\} \arrow[d, Rightarrow, no head] \& \text{span } \{y_3\} \arrow[d, Rightarrow, no head] \& \text{span } \{z_3\} \arrow[d, Rightarrow, no head] \\
\mathbb{F} \& \arrow[l, "\begin{bmatrix} 1\\0 \end{bmatrix}"'] \mathbb{F}^2 \& \mathbb{F}^2 \arrow[l, "{\begin{bmatrix} 1&0\\0&1 \end{bmatrix}}"'] \& \mathbb{F} \arrow[l, "{\begin{bmatrix} 1&0 \end{bmatrix}}"']                           \& \mathbb{F} \arrow[l, "0"']                          \& \mathbb{F}  \arrow[l, "1"']                                          \\
u_1                                     \& v_1 \arrow[l, maps to]                                               \& w_1 \arrow[l, maps to]                                       \& x_1 \arrow[l, maps to]                              \&                                                   \&                                                     \\
                                                           0 \& v_2 \arrow[l, maps to]                                               \& w_2 \arrow[l, maps to]                                       \&                                                    \&                                                     \&                                                     \\
                                                           \&                                                                      \&                                                              \& 0                                                    \& y_3 \arrow[l, maps to]                              \& z_3 \arrow[l, maps to]                                                
\end{tikzcd}
$$

Its persistence diagram is $\{(1, 4), (2, 3), (5, 6)\}$. The birth time of the point $(2, 3)$ is 3, and its death time is 2. Its persistence is 1.
\end{example}

\begin{example}[Superlevel Filtration]
Given a continuous function $f: X \to \mathbb{R}$ on a topological space $X$, its \emph{superlevel filtration} consists of the \emph{superlevel sets} $$f^{-1}[t, \infty) = \{x : f(x) \geq t\}$$
of $f$. Note that for $s \leq t$, $f^{-1}[s, \infty)\supseteq f^{-1}[t, \infty)$.
\end{example}

\begin{example}[Persistent Cohomology]
Let $q \geq 0$ and $\mathbb{F}$ be a field. The $q^{th}$ persistent cohomology of a filtration $(X_t)$ is the persistence module where $V_t = H^q(X_t; \mathbb{F})$ (superscript denotes cohomology, cohomology is defined in \cref{def:cohomology}) and $L_{ts}: H^q(X_t; \mathbb{F}) \to H^q(X_s; \mathbb{F})$ is the linear map induced by the inclusion map $X_s \to X_t$. Note the reversed direction of $L_{st}$.
\end{example}

\subsection{Morse Theory}

We first recall the basic terminology of Morse theory.
Let $M$ be a compact Riemannian manifold of dimension $D < \infty$. Let $p \in M$ and $f: M \to \mathbb{R}$ be a smooth function. Let $(x_1, ..., x_D)$ be a local coordinate system at $p$. The point $p$ is said to be a \emph{critical point} if the differential $df_p: T_p M \to \mathbb{R}$ of $f$ at $p$ vanishes, i.e. $df_p = 0$, or equivalently, if $\frac{\partial f}{\partial x_1} = ... = \frac{\partial f}{\partial x_D} = 0$. If the point $p$ is critical, it is said to be \emph{nondegenerate} if the Hessian matrix $\frac{\partial^2 f}{\partial x_i \partial x_j}$ is nonsingular. Note that at a critical point, the Hessian is a well-defined bilinear form on the tangent space, and hence its nondegeneracy is independent of the choice of the coordinate system (Cf. Chapter 2 of \citep{milnor63_morseTheory}). If the point $p$ is a nondegenerate critical point, its \emph{index} is the number of negative eigenvalues of the Hessian matrix. The function $f$ is said to be a \emph{Morse function} if all its critical points are nondegenerate.

A critical value is a function value of a critical point. A regular value is a real number that is not a critical value.

We now define the Morse complex, which is a chain complex (Cf. \cref{def:chain_complex}). It is the main tool to compute the homology of sublevel sets of a Morse function. We denote by $M^t = \{p \in M: f(p) \leq t\}$ the sublevel sets of $f$.

\begin{definition}[Morse Complex with Coefficients in $\mathbb{Z}/2\mathbb{Z}$; Cf. Section 2.4 of \citep{hutchings02_morseTheory} and Chapter 3.1 of \citep{audin14_morseTheory}]\label{def:morse_complex_mod_2}
The Morse complex $C^\text{Morse}_\bullet(f, t; \mathbb{Z}/2\mathbb{Z})$ of the sublevel set $M^t$ with coefficients in $\mathbb{Z}/2\mathbb{Z}$ is defined as follows.
$$C^\text{Morse}_q(f, t; \mathbb{Z}/2\mathbb{Z}) = \left\{\sum a_i p_i: a_i \in \mathbb{Z}/2\mathbb{Z}, p_i \text{ critical with index }q, f(p_i) \leq t\right\},$$ and
$$\partial_q p = \sum_{p' \text{ critical with index } q-1} n(p, p') p',$$
where $n(p, p')$ is the number of gradient descent flow lines from $p$ to $p'$ modulo 2.
\end{definition}
\begin{remark}
Since a Morse function on a compact manifold has finitely many critical points (by Corollary 2.3 of \cite{milnor63_morseTheory}), the sum in the definition of $C^\text{Morse}_q(f, t; \mathbb{Z}/2\mathbb{Z})$ is always a finite sum.
\end{remark}

\begin{theorem}\label{thm:morse_homology_naturally_isomorphic_to_singular_homology}
Let $f: M \to \mathbb{R}$ be a Morse function satisfying the Smale condition (defined in \cref{def:smale_condition}). Denote by $M^t = \{p \in M: f(p) \leq t\}$ and $M^t_s = \{p \in M: s \leq f(p)\leq t\}$. Denote by $H_*$ and $H^\text{Morse}_*$ the singular (Cf. p.108 of \citep{hatcher02_algtopo}) and Morse homology. Suppose $s < t$, and they are both regular values of $f$. Then there exist canonical isomorphisms between singular and Morse homology such that following diagram
\begin{equation*}
\begin{tikzcd}
H_*(M^s; \mathbb{Z}/2\mathbb{Z}) \ar[r] \ar[d, "\cong"]
& H_*(M^t; \mathbb{Z}/2\mathbb{Z}) \ar[d, "\cong"]\\
H^\text{Morse}_*(M^s; \mathbb{Z}/2\mathbb{Z}) \ar[r]
& H^\text{Morse}_*(M^t; \mathbb{Z}/2\mathbb{Z}),
\end{tikzcd}
\end{equation*}
where horizontal maps are induced by inclusion, commutes. Further, the analogous theorem for reduced homology holds.
\end{theorem}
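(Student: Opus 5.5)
We would prove the theorem by realizing the Morse complex of $M^t$ as the cellular-type chain complex of a filtration of $M^t$ by sublevel sets at regular values. Naturality then reduces to the observation that the filtration of $M^s$ is a truncation of the filtration of $M^t$.

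\textbf{Step 1: choose the filtration.} Since $M$ is compact and $f$ is Morse, there are finitely many critical values. After a small perturbation of $f$ near critical points, which changes neither the Morse complex nor the sublevel sets up to homotopy, we may assume each critical value carries critical points of one index only. Choose regular values $c_0 < c_1 < \dots < c_m$ with $c_0 < \min f$, $c_m = t$, and $s = c_j$ for some $j$, such that each interval $(c_{i-1}, c_i)$ contains exactly one critical value. This is possible because $s$ and $t$ are regular.

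\textbf{Step 2: homology of the successive pairs.} By the standard handle-attachment theorem of Morse theory (Milnor, Theorems 3.1--3.2), $M^{c_i}$ deformation retracts onto $M^{c_{i-1}}$ with one $\lambda$-cell attached per critical point of index $\lambda$ in $(c_{i-1}, c_i)$. Excision then gives
\[
H_*(M^{c_i}, M^{c_{i-1}}; \mathbb{Z}/2\mathbb{Z}) \cong \bigoplus_{p} \mathbb{Z}/2\mathbb{Z}\,[p],
\]
concentrated in the degree equal to the common index. We regroup the filtration by index, setting $F_q$ to be the union of $M^{c_0}$ and the unstable-manifold cells of index at most $q$. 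This is a CW-like filtration of $M^t$: the cells are attached along descending flow lines, which stay in $M^t$ because $-\nabla f$ decreases $f$ and points inward along $\partial M^t$.

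With this filtration, $H_q(F_q, F_{q-1})$ is the free $\mathbb{Z}/2\mathbb{Z}$-module on the index-$q$ critical points with value at most $t$, i.e.\ it equals $C^\text{Morse}_q(f,t;\mathbb{Z}/2\mathbb{Z})$. The standard cellular homology argument shows that the complex formed by the connecting maps of the triples $(F_q, F_{q-1}, F_{q-2})$ computes $H_*(M^t)$.

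\textbf{Step 3: identify the boundary (main obstacle).} We must show that the connecting map $H_q(F_q,F_{q-1}) \to H_{q-1}(F_{q-1},F_{q-2})$ equals the Morse differential, i.e.\ that the coefficient of $p'$ in the image of $p$ is $n(p,p')$ mod 2. This is exactly where the Smale condition enters. Transversality of stable and unstable manifolds makes $W^u(p) \cap W^s(p')$ a finite union of flow lines when $\operatorname{ind} p - \operatorname{ind} p' = 1$. It also lets one compute the degree mod 2 of the attaching map of the cell of $p$ onto the sphere $F_{q-1}/F_{q-2}$ near $p'$ by counting preimages of a point of $W^s(p')$, each preimage being one flow line. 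I would carry out this local-degree computation following Hutchings, Section 2.4, and Audin--Damian, Chapter 3, rather than re-derive the gluing analysis. Working with $\mathbb{Z}/2\mathbb{Z}$ coefficients removes all orientation issues.

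\textbf{Step 4: naturality and the reduced version.} Because $f$ decreases along flow lines, the Morse complex of $M^s$ is the subcomplex of that of $M^t$ spanned by critical points of value at most $s$. This subcomplex is the truncation of the filtration at $c_j$. The inclusion $M^s \hookrightarrow M^t$ is filtration-preserving and cellular, so the isomorphisms of Step 2, which come from naturality of long exact sequences of triples, commute with the maps induced by inclusion; this gives the commutative square.

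For reduced homology, augment the Morse complex by sending each index-$0$ critical point to $1 \in \mathbb{Z}/2\mathbb{Z}$. This matches the augmentation of the cellular chain complex, since index-$0$ cells are points, and it is compatible with the inclusion $M^s \hookrightarrow M^t$. The same argument therefore yields the reduced statement.
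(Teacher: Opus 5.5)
Your route builds a cellular, Thom--Smale-type complex, which is genuinely different from the paper's. As written, though, Step 2 has a gap exactly where that approach is hard. The cellular argument needs two facts about $F_q=\bigcup\{W^u(p): \text{ind}\,p\le q,\ f(p)\le t\}$:
\begin{enumerate}
\item $H_*(F_q,F_{q-1};\mathbb{Z}/2\mathbb{Z})$ is concentrated in degree $q$ and free on the index-$q$ critical points;
\item the top stage is $M^t$, or a deformation retract of it.
\end{enumerate}
Neither follows from what you cite.

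For the first fact, Milnor's Theorems 3.1--3.2 attach cells in order of critical value. Those cells are small descending disks in Morse coordinates, produced by an ad hoc deformation. They are not the global unstable manifolds of the given metric. Moreover, ``regrouping by index'' is not an operation on the sublevel filtration, because $F_q$ is not a sublevel set of $f$. The statement that each $\overline{W^u(p)}$ is a closed $q$-cell with boundary in $F_{q-1}$ is the Thom--Smale CW theorem (Laudenbach). That theorem needs extra hypotheses (e.g.\ the gradient being standard in Morse charts) or a compactification of unstable manifolds by broken trajectories. This is at least as much work as the flow-line count you defer in Step 3.

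The second fact is false as stated. A point of $M^t$ can lie on the unstable manifold of a critical point above level $t$, so the top stage is in general a proper subset of $M^t$. For example, take the height function on $S^1$ and a regular $t$ between the extrema: the top stage is the minimum point, while $M^t$ is an arc. You need a deformation retraction of $M^t$ onto the top stage, which you do not provide. Step 4 inherits this problem: $M^s$ is not filtered by the $F^s_q$ either. Also, $F^s$ is the subcomplex of cells with $f(p)\le s$, not a truncation of the filtration at $c_j$. Finally, the perturbation in Step 1 is unused once you pass to $F_q$, and as stated it changes the flow whose counts define the Morse complex; drop it.

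With those inputs supplied, your argument closes and naturality becomes transparent. $F^s\subseteq F^t$ is a subcomplex, and both composites in the square are induced by the single inclusion $F^s\hookrightarrow M^t$. You also work directly over $\mathbb{Z}/2\mathbb{Z}$, with no change of coefficients.

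The paper builds no cell structure at all. Its steps are:
\begin{itemize}
\item It applies Hutchings's Theorem 3.2 to the rescaled function $g_s=(f-\min f+1/2)/(s-\min f+1/2)$, for which $X_0=\emptyset$ and $X_1=f^{-1}(s)$.
\item It gets naturality from the chain-level formula of the canonical map: a singular simplex is sent to the critical points that catch it under gradient descent, by the same formula on $M^s$ and $M^t$.
\item It proves the integral statement and passes to $\mathbb{Z}/2\mathbb{Z}$ by the universal coefficient theorem and the five lemma.
\item It handles the reduced case as homology relative to a global minimum.
\end{itemize}
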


\begin{proof}[Sketch of Proof]
This is a straight-forward consequence of Theorem 3.2 of \citep{hutchings02_morseTheory}. We first show that Theorem 3.2 of \citep{hutchings02_morseTheory} implies the integral version of the theorem (with $\mathbb{Z}/2\mathbb{Z}$ replaced by $\mathbb{Z}$ throughout). To see this, first, fixing $s$ and applying Theorem 3.2 of \citep{hutchings02_morseTheory} to $g_s = (f - \min f + 1/2)/(s - \min f + 1/2)$ shows that $H_*(M^s) \cong H^\text{Morse}_*(M^s)$ through a canonical map ($X_0 = g_s^{-1}\{0\}$ is empty, $X_1 = g_s^{-1}\{1\} = \{p \in M: f(p) = s\}$). The canonical map, explicitly defined in the proof, sends a simplex to the critical points that ``catch" the simplex under gradient descent flow, and this map commutes with inclusion. The claim for absolute homology then follows.

For the reduced case, one may define the reduced homology as the relative homology with a global minimum, and the canonical isomorphism follows by taking the quotient with the subspace generated by the global minimum.

The $\mathbb{Z}/2\mathbb{Z}$ version follows from a standard application of the universal coefficient theorem for homology and the five lemma (Theorem 3A.3 and p.129 of \citep{hatcher02_algtopo}).
\end{proof}

In light of this theorem, we will not distinguish between singular homology and Morse homology from this point onwards.

\begin{proposition}\label{prop:persistent_homology_of_morse_function_is_interval_decomposable}
Let $f: M \to \mathbb{R}$ be a Morse function on a smooth Riemannian manifold $M$. Then the persistent homology with coefficients in $\mathbb{Z}/2\mathbb{Z}$ of the sublevel filtration of $f$ is interval-decomposable.
\end{proposition}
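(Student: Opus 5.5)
The plan is to reduce the statement to a standard structure theorem for persistence modules, namely Crawley-Boevey's theorem that every pointwise finite-dimensional (pfd) persistence module indexed by a subset of $\mathbb{R}$ is interval-decomposable (see Theorem 2.8 of \citep{chazal16_persistenceModules}). Throughout I take $M$ to be compact, as in the standing assumptions of this subsection; this is the case needed for $f_{k,D}$ on $S^{D-1}$. With this reduction, the proof amounts to checking that $V_t = H_q(M^t; \mathbb{Z}/2\mathbb{Z})$ is finite-dimensional for every $t$ and every $q$.

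\textbf{Step 1 (finitely many critical values).} Since $f$ is Morse and $M$ is compact, the critical points are isolated. Hence there are only finitely many of them, by Corollary 2.3 of \citep{milnor63_morseTheory}. Denote the critical values by $c_1 < \dots < c_m$.

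\textbf{Step 2 (finite-dimensionality of $V_t$).} For $t < \min f$ the sublevel set $M^t$ is empty, so $V_t = 0$. For $t$ a regular value, Milnor's handle attachment theorems (Theorems 3.1 and 3.5 of \citep{milnor63_morseTheory}) show that $M^t$ is homotopy equivalent to a finite CW complex, with one cell of dimension $\lambda$ for each critical point of index $\lambda$ lying below $t$. Its homology is therefore finite-dimensional; equivalently, the Morse complex of \cref{def:morse_complex_mod_2} is finite-dimensional, and \cref{thm:morse_homology_naturally_isomorphic_to_singular_homology} identifies its homology with $V_t$.

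\textbf{Step 3 (critical levels).} When $t = c_j$ is a critical value, I would use the fact that $M^{c_j}$ is a deformation retract of $M^{c_j+\epsilon}$ for all small $\epsilon > 0$, where $c_j + \epsilon$ is a regular value. This follows from the Morse lemma and the local model of a nondegenerate critical point: in coordinates $-\sum_{i \le \lambda} x_i^2 + \sum_{i > \lambda} x_i^2$, the region $\{c_j \le f \le c_j+\epsilon\}$ retracts onto $\{f = c_j\}$ near each critical point, and the gradient flow does this away from the critical points. An alternative is to invoke the fact that compact sublevel sets of a Morse function are ENRs with finitely generated homology. Either way $\dim V_{c_j} < \infty$, so the module is pfd.

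\textbf{Step 4 (conclusion).} Crawley-Boevey's theorem now gives the interval decomposition directly. As a more elementary alternative, Steps 1--3 also show that the maps $L_{st}$ are isomorphisms whenever $[s,t]$ contains no critical value. One could then restrict to a finite index set consisting of one level in each of the intervals $(c_j, c_{j+1})$ together with the $c_j$ themselves, decompose there by Gabriel's theorem for type-$A_n$ quivers, and extend the resulting intervals to $\mathbb{R}$ as constant pieces.

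I expect the main obstacle to be Step 3, the behaviour exactly at critical values, because \cref{thm:morse_homology_naturally_isomorphic_to_singular_homology} is only stated for regular values. Since the filtration consists of closed sublevel sets, the relevant check is right-continuity at each $c_j$: the isomorphism $V_{c_j} \cong V_{c_j+\epsilon}$ is what makes the intervals closed on the left, with birth times attained at the $c_j$. I would verify this carefully using the deformation retraction in Step 3.
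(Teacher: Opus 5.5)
Your proof is correct and follows the paper's route: reduce, via Theorem 2.8 of \citep{chazal16_persistenceModules}, to pointwise finite-dimensionality of $H_q(M^t;\mathbb{Z}/2\mathbb{Z})$, which comes from the finiteness of the critical set on compact $M$. Your version is more careful than the paper's. The paper applies \cref{thm:morse_homology_naturally_isomorphic_to_singular_homology} at every $t$, although that theorem is stated only for regular values and assumes the Smale condition. Your handle-attachment argument at regular levels, together with the deformation retraction of $M^{c_j+\epsilon}$ onto $M^{c_j}$ at critical levels, closes that small gap.
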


\begin{proof}
By Theorem 2.8 of Chazal, it suffices to show that $H_*(M^t; \mathbb{Z}/2\mathbb{Z})$ is finitely generated for every $t$. By 
\cref{thm:morse_homology_naturally_isomorphic_to_singular_homology}, $H_*(M^t; \mathbb{Z}/2\mathbb{Z}) \cong H^\text{Morse}_*(M^t; \mathbb{Z}/2\mathbb{Z})$. Since each $C^\text{Morse}_*(M^t)$ is finitely generated (Cf. the remark after \cref{def:morse_complex_mod_2}), so is each of $H^\text{Morse}_*(M^t)$. The proposition then follows.
\end{proof}

\subsection{Computational Topology}

In this subsection, we present the main algorithm to compute persistence diagrams. We deviate from standard notation in the literature to explicitly highlight the role of the ordering of basis elements in chain groups, so that we can carry out careful bookkeeping in our main induction argument in \cref{prop:column_reduce_boundary}.

\begin{definition}[Concatenated Complex and Concatenated Differential]
Let $(C_\bullet, \partial_\bullet)$ be a chain complex of vector spaces. The concatenated complex is defined as $C = \oplus_{p \in \mathbb{Z}} C_p$. The concatenated differential $\partial: C \to C$ is defined by
$$\partial c = \partial_p c$$ whenever $c \in C_p$.
\end{definition}

\begin{definition}[Lowest Generator and Reduced Matrix]
Let $\{w_1, ..., w_n\}$ be a basis of $(\mathbb{Z}/2\mathbb{Z})^n$. Suppose this basis is totally ordered by $\prec$.
\begin{enumerate}
\item For a nonzero vector $v \in (\mathbb{Z}/2\mathbb{Z})^n$, define
$\text{low}_\prec(v)$ as the first $w_j$ (with respect to $\prec$) such that $v \in \text{span } \{w_i: w_i \prec w_j \text{ or } w_i = w_j\}$. We call $\text{low}_\prec(v)$ the \emph{lowest generator} of $v$.
\item Let $R: (\mathbb{Z}/2\mathbb{Z})^n \to (\mathbb{Z}/2\mathbb{Z})^n$ be a linear map. $R$ is said to be reduced with respect to $\prec$ if $\text{low}_\prec(R w_i) \neq \text{low}_\prec(R w_j)$ whenever $i \neq j$ and $R w_i$ and $R w_j$ are both nonzero.
\end{enumerate}
\end{definition}

\begin{definition}[Upper Triangular]
Let $\{w_1, ..., w_n\}$ be a basis of $(\mathbb{Z}/2\mathbb{Z})^n$ that is totally ordered by $\prec$. A linear map $U:(\mathbb{Z}/2\mathbb{Z})^n \to (\mathbb{Z}/2\mathbb{Z})^n$ is said to be upper triangular with respect to $\prec$ if $Uw_i \in \text{span } (\{w_i\} \cup \{w_j: w_j \prec w_i \})$. It is said to be unit upper triangular if $Uw_i \in w_i + \text{span } \{w_j: w_j \prec w_i \}$.
\end{definition}

\begin{definition}[Column Reduced Matrix]
Let $\{w_1, ..., w_n\}$ be a basis of $(\mathbb{Z}/2\mathbb{Z})^n$ that is totally ordered by $\prec$, and $\partial, V, R: (\mathbb{Z}/2\mathbb{Z})^n \to (\mathbb{Z}/2\mathbb{Z})^n$ be linear maps. Suppose $\partial^2 = 0$. $\partial$ is said to be column reduced via 
\begin{equation}\label{eqn:column_reduced}
R = \partial V
\end{equation}
with respect to $\prec$ if the followings hold.
\begin{description}
\item[Factorization] \cref{eqn:column_reduced} holds when the right-hand side is interpreted as composition of linear maps (in matrix form: as matrix multiplication).
\item[Unit-Upper-Triangular Condition] $V$ is unit upper triangular with respect to $\prec$.
\item[Reduction] $R$ is reduced with respect to $\prec$.
\end{description}
\end{definition}

\begin{theorem}\label{thm:persistence_diagram_from_reduced_matrix}
Let $f: M \to \mathbb{R}$ be a smooth Morse function on a compact Riemannian manifold $M$. Let $C_p$ be the $p^{th}$ Morse chain group of $f$ and $\partial: \oplus_p C_p \to \oplus_p C_p$ be the concatenated differential. Let $\prec$ be a total order on the critical points of $f$ such that $c \prec c'$ whenever $f(c) < f(c')$. Suppose $\partial$ is column reduced via $R = \partial V$. Then the persistence diagram of the sublevel filtration of $f$ at dimension $p$ is given by $\mathcal{D}_p^\text{finite} \cup \mathcal{D}_p^\text{infinite}$, where
\begin{align*}
\mathcal{D}_p^\text{finite} &= \{(f(\text{low}_\prec (Rc)), f(c)) : \text{ind } c = p+1, Rc \neq 0\}
\\
\mathcal{D}_p^\text{infinite} &= \{(f(c), \infty) : \text{ind } c = p, Rc = 0, c \neq \text{low}_\prec(Rc') \text{ for every critical point } c'\}.
\end{align*}
\end{theorem}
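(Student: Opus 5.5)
This is the standard persistence algorithm (Edelsbrunner--Letscher--Zomorodian, in the form of Cohen-Steiner--Edelsbrunner--Morozov), so I will prove it in the usual way. The argument has two stages: reduce to a finite filtration of chain complexes, then read the barcode off the column reduction.

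\textbf{Reduction to a chain filtration.} List the critical points as $c_1 \prec c_2 \prec \cdots \prec c_n$, and let $K_j = \operatorname{span}\{c_1, \ldots, c_j\}$. Since $\prec$ refines the order by $f$, $K_j$ is spanned by critical points with value $\le f(c_j)$. Because the Morse differential strictly decreases $f$ (gradient flow lines descend), each $K_j$ is a subcomplex.

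For a regular value $t$, the Morse complex $C^{\mathrm{Morse}}_\bullet(f,t)$ equals $K_j$, where $j$ is the number of critical points with value $\le t$. By \cref{thm:morse_homology_naturally_isomorphic_to_singular_homology}, the persistence module $H_*(M^t)$, taken at regular values, is isomorphic to $j \mapsto H_*(K_j)$, compatibly with inclusions.

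Between consecutive critical values the sublevel sets deform-retract onto each other, so the module is constant there. At a critical value the closed sublevel set $M^t$ already contains the critical point, so it agrees with the module just above. Ties among equal critical values only create zero-length intervals in the refined index, and these are discarded by the condition $\inf J < \sup J$. Hence it suffices to compute the barcode of the finite filtration $K_1 \subseteq \cdots \subseteq K_n$ and relabel the index $j$ by $f(c_j)$.

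\textbf{Barcode from the reduction $R = \partial V$.} Since $V$ is unit upper triangular, the vectors $z_j := V c_j$ form a basis, and $z_1, \ldots, z_j$ span $K_j$ for every $j$. Moreover $\partial z_j = R c_j$.

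Let $Z = \{\, j : R c_j = 0 \,\}$ and $P = \{\, (\mathrm{low}(R c_j), c_j) : R c_j \ne 0 \,\}$. Because $R$ is reduced, the lows are distinct, so $P$ is a partial matching.

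Fix a pair with $\mathrm{low}(R c_j) = c_i$. Then $R c_j$ is a cycle, because $\partial R = \partial^2 V = 0$. It lies in $K_i$, since its lowest generator is $c_i$, and not in $K_{i-1}$. Also $i < j$: $\partial c_j$ has $f$-value strictly below $f(c_j)$, and unit upper triangularity keeps everything in $K_j$. Similarly, if $c_i$ is a low, then $R c_i = 0$: otherwise $\partial$ of a cycle with low $c_i$ would contradict $\partial^2 = 0$ together with the triangular structure. This is the standard lemma that a column cannot be both positive and negative; I would prove it by showing that $\dim H(K_i) - \dim H(K_{i-1}) = \pm 1$ according to the type of $c_i$.

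Now take the basis $\{z_j : j \in Z \text{ unpaired}\} \cup \{R c_j, z_j : (c_i, c_j) \in P\}$, where $R c_j$ replaces $z_i$. This is again compatible with the filtration, since $R c_j \in K_i \setminus K_{i-1}$.

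For each $j$, the homology $H(K_j)$ then has a basis consisting of the classes of
\begin{itemize}
\item the cycles $z_m$, $m \le j$, that are unpaired, and
\item the cycles $R c_{m'}$ with low $c_i$, $i \le j < m'$.
\end{itemize}
These cycles are independent modulo the boundaries, which are spanned by the $R c_{m'}$ with $m' \le j$; independence follows because the lowest generators are distinct.

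This basis is compatible with all the inclusion maps. The module therefore splits into interval modules $[i, j)$ for $(c_i, c_j) \in P$ and $[m, \infty)$ for unpaired $m$. Grading by index, a generator $R c$ with $\operatorname{ind} c = p+1$ lies in degree $p$. Converting indices to values via $f$ gives exactly $\mathcal{D}^{\mathrm{finite}}_p$ and $\mathcal{D}^{\mathrm{infinite}}_p$ as stated.

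\textbf{Main obstacle.} The delicate step is the independence-and-spanning claim in the second stage. The key is an induction on $j$: adding $c_j$ either creates a class (when $R c_j = 0$, represented by $z_j$) or kills exactly the class of $R c_j$, whose low is the youngest generator it involves.
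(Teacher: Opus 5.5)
Your route is the paper's. You pass through the natural Morse--singular isomorphism (\cref{thm:morse_homology_naturally_isomorphic_to_singular_homology}) to the finite filtration $K_1 \subseteq \cdots \subseteq K_n$ of the concatenated Morse complex, then read the barcode off $R = \partial V$. The paper just cites Dey's textbook for the second stage, while you reprove it. Your ``a low has a zero column'' step is the paper's \cref{lem:R_of_lowest_one_must_vanish}. The spanning half of what you call the main obstacle needs no induction. In your triangular basis $\{b_m\}$ one has $\partial(\sum_m a_m b_m) = \sum_{m \text{ a death}} a_m Rc_m$, and these columns have distinct lows. So the cycles of $K_j$ are exactly the span of the non-death $b_m$ with $m \le j$.

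One step does not follow from the hypotheses as stated: ``a generator $Rc$ with $\text{ind}\, c = p+1$ lies in degree $p$.'' The theorem only asks that $V$ be unit upper triangular for $\prec$ on $\bigoplus_p C_p$, so nothing forces $V$ to preserve the index grading. For instance, add to column $c$ an earlier column $c'$ of a different index with $\text{low}(Rc') \prec \text{low}(Rc)$. This keeps $R$ reduced but makes $Vc$ and $Rc$ inhomogeneous. Your basis then only splits the ungraded module $\bigoplus_p H_p(K_j)$, and does not say which $H_p$ each interval belongs to.

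The repair is short.
\begin{itemize}
\item Write $Vc_j = c_j + w$ with $w \in K_{j-1}$. Since $\partial c_j$ has degree $\text{ind}\, c_j - 1$, for $q \neq \text{ind}\, c_j - 1$ the degree-$q$ component of $Rc_j$ is $\partial w_{q+1}$. This lies in $\partial K_{j-1} = R K_{j-1} = \text{span}\{Rc_m : m<j\}$, using $VK_{j-1} = K_{j-1}$.
\item Suppose $\text{low}(Rc_j)$ had such a degree $q$. Then that component has the same low. Since the nonzero $Rc_m$ have distinct lows, that low would equal $\text{low}(Rc_m)$ for some $m<j$, contradicting reducedness. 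Hence $\text{ind}\,\text{low}(Rc_j) = \text{ind}\, c_j - 1$.
\item Now replace each $z_m$ and each $Rc_j$ in your basis by its homogeneous component in the degree of its low. Replacements of cycles are still cycles, since $\partial$ is homogeneous. Every replacement keeps its low and filtration position. For a death $c_j$, $\partial\,(z_j)_{\text{ind}\, c_j} = (Rc_j)_{\text{ind}\, c_j - 1}$, which is the replacement of the paired birth vector.
\end{itemize}
Your argument then runs verbatim degree by degree. Alternatively, invoke pairing uniqueness: the lows of any such reduction are determined by ranks of lower-left submatrices of $\partial$. You may then assume $V$ is degree-preserving, as the standard algorithm and the reductions built later in the paper are.
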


\begin{remark}
Note that $\prec$ is not necessarily unique.
\end{remark}

\begin{proof}[Sketch of Proof]
By \cref{thm:morse_homology_naturally_isomorphic_to_singular_homology}, it suffices to compute the persistent homology of the Morse complex. Since the Morse complex is finitely generated, it is amenable to the algorithmic computation described in Chapter 3.3 of \citep{dey22_computationalToplogy}. By the paragraph above Fact 3.9 of \citep{dey22_computationalToplogy}, it suffices to column reduce $\partial$ to obtain the so-called persistence pairs from the lowest generators. This gives the claim about $\mathcal{D}_p^\text{finite}$. $\mathcal{D}_p^\text{infinite}$ consists of points of the form $(f(c), \infty)$, where $c$ is unpaired. By Theorem 3.6 of \citep{dey22_computationalToplogy}, if $Rc \neq 0$, then $c$ is necessarily paired with $\text{low}(Rc)$. Therefore, if $c$ is unpaired, $Rc = 0$. The claim about $\mathcal{D}_p^\text{infinite}$ then follows.
\end{proof}

\begin{lemma}\label{lem:R_of_lowest_one_must_vanish}
Let $\{w_1, ..., w_n\}$ be a basis of $(\mathbb{Z}/2\mathbb{Z})^n$ that is totally ordered by $\prec$, and $\partial: (\mathbb{Z}/2\mathbb{Z})^n \to (\mathbb{Z}/2\mathbb{Z})^n$ be a linear map. Suppose $\partial^2 = 0$, and $\partial$ is column reduced via $R = \partial V$ with respect to $\prec$. If $w_i = \text{low}_\prec(Rw_j)$, then $Rw_i = 0$.
\end{lemma}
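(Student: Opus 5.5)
The plan is to argue by contradiction, using $\partial^2 = 0$, the factorization $R = \partial V$, and the fact that $V$ is invertible. Suppose $w_i = \text{low}_\prec(Rw_j)$ but $Rw_i \neq 0$. Since $V$ is unit upper triangular, it is invertible, and $V w_i \in w_i + \text{span}\{w_l : w_l \prec w_i\}$. Then
$$\partial R w_j = \partial \partial V w_j = 0.$$
Write $Rw_j = w_i + \sum_{w_l \prec w_i} a_l w_l$, which is possible because its lowest generator is $w_i$.

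The key step is to express $\partial$ applied to lower-ordered basis vectors in terms of $R$. From $R = \partial V$ we get $\partial = R V^{-1}$, and $V^{-1}$ is also unit upper triangular. Hence for every $w$ the vector $\partial w$ is a linear combination of $R w_l$ over $w_l \preceq w$, with coefficient $1$ on $Rw$ itself. Substituting this into $0 = \partial R w_j$ gives
$$0 = Rw_i + \sum_{w_l \prec w_i} b_l\, R w_l$$
for suitable coefficients $b_l \in \mathbb{Z}/2\mathbb{Z}$.

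We now use reducedness. The nonzero vectors among $\{R w_l : w_l \preceq w_i\}$ have pairwise distinct lowest generators. A nontrivial linear combination of vectors with distinct lowest generators is nonzero: its lowest generator is the maximal one appearing, because that basis element cannot be cancelled by the others. The combination above contains $Rw_i \neq 0$ with coefficient $1$, so it is nontrivial, and it cannot vanish. This contradicts $0 = \partial R w_j$, so $Rw_i = 0$.

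The only delicate point is the bookkeeping that $V^{-1}$ is unit upper triangular with respect to $\prec$. This is the standard fact that unit upper triangular maps form a group, and can be checked by inverting via the Neumann series of the nilpotent part $V - I$. The distinct-lowest-generators independence argument is routine.
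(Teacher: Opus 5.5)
Your proof is correct and follows essentially the same route as the paper. Both use $\partial^2 = 0$ to get $\partial(Rw_j) = 0$, rewrite $\partial = RV^{-1}$ with $V^{-1}$ unit upper triangular to obtain $Rw_i + \sum_{w_l \prec w_i} b_l R w_l = 0$, and conclude via the linear independence of nonzero columns of $R$ that have distinct lowest generators. Two of your steps are slightly more careful than the paper's wording: you apply that independence to the set that includes $Rw_i$ itself, and you explicitly justify that $V^{-1}$ (not just $V$) is unit upper triangular.
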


\begin{proof}
If $w_i = \text{low}_\prec(Rw_j)$, then $R w_j \in w_i + w$ for some $w \in \text{span } \{w_\ell: w_\ell \prec w_i\}$. This means $w_i + w \in \text{im } R \subseteq \text{im } \partial \subseteq \ker \partial$, where the first inclusion follows from $R = \partial V$, and the second follows from $\partial^2 = 0$.
Then we have the following sequence of equations:
\begin{align*}
\partial (w_i + w) &= 0
\\R V^{-1}(w_i + w) &= 0
\\R (w_i + v' + V^{-1}w) &= 0 \text{ for some } v' \in \text{span } \{w_\ell: w_\ell \prec w_i\}
\\R (w_i + v) &= 0 \text{ for some } v \in \text{span } \{w_\ell: w_\ell \prec w_i\},
\end{align*}
where the second last line follows from the fact that $V$ is upper triangular, and the last line follows by letting $v = v' + V^{-1}w$. The last line implies
$$Rw_i \in \text{span} \{Rw_\ell: w_\ell \prec w_i\} = \text{span} \{Rw_\ell: w_\ell \prec w_i, \quad Rw_\ell \neq 0\}.$$ Since $R$ is reduced, distinct nonzero images have distinct lowest generators, and hence $\{Rw_\ell: w_\ell \prec w_i, Rw_\ell \neq 0\}$ is linearly independent. This forces that $Rw_i$ must be 0.
\end{proof}

\section{Morse Theoretic Computations}
\label{sec:morse_computation}


First, we set up the notation to describe the critical points of $f_{k,D}$.

For each signum vector $\xi \in \Xi = \{-1, 0, 1\}^D - \{(0, ..., 0)\}$, define $x^\xi \in S^{D-1}$ by 
\begin{align}\label{eqn:critical_points}
x^\xi_i &\propto \xi_i \lambda_i^{-\frac{1}{k-2}}
\end{align}
For $\xi \in \Xi$, if $\{\xi_i^k : \xi_i \neq 0\}$ is a singleton, we denote the element of the singleton by $\xi^k$; otherwise, $\xi^k$ is undefined.

\begin{lemma}\label{lem:critical_points_of_tensor} $\quad$
\begin{enumerate}
\item The set of critical points of $f_{k,D}$ is
\begin{equation*}
\mathcal{C} = \begin{cases}
\{x^\xi : \xi \in \Xi\} & \text{ if $k$ is even,}
\\
\{x^\xi : \xi \in \Xi, \xi_i \geq 0 \text{ for all $i$, or }\xi_i \leq 0 \text{ for all $i$}\} & \text{ if $k$ is odd}.
\end{cases}
\end{equation*}
\item Whenever $x^\xi \in \mathcal{C}$, $\xi^k$ is well-defined.
\item For $x^\xi \in \mathcal{C}$, the Hessian of $f_{k,D}$ at $x^\xi$, in ambient coordinates, is $$\frac{\xi^k}{\|\lambda^{-\frac{1}{k-2}}\|_{L^2(|\xi|)}^{k-2}} [(k-1) P^\xi - I],$$ where $I$ is the identity matrix, and $P^\xi$ is the linear operator that projects $\mathbb{R}^D$ to $\text{span } \{e_i : \xi_i \neq 0\}$.
\item Every critical point $x^\xi \in \mathcal{C}$ is nondegenerate, and its index is
$$
\begin{cases}
D - |\{i: \xi_i \neq 0\}| & \text{ if } \xi^k = 1
\\
|\{i: \xi_i \neq 0\}| - 1 & \text{ if } \xi^k = -1
\end{cases}
$$
\item $f_{k,D}$ is a Morse function.
\end{enumerate}
\end{lemma}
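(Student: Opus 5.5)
I would prove the lemma by a direct Lagrange-multiplier computation in ambient coordinates, treating $f_{k,D}$ as the restriction of $F(x) = \frac1k\sum_i \lambda_i x_i^k$ to $S^{D-1}$.

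\emph{Critical points.} The first step is to characterize them. A point $x \in S^{D-1}$ is critical if and only if $\nabla F(x) = \mu x$ for some $\mu \in \mathbb{R}$, that is, $\lambda_i x_i^{k-1} = \mu x_i$ for every $i$. Pairing with $x$ gives $\mu = \sum_i \lambda_i x_i^k = k f_{k,D}(x)$. For each $i$, either $x_i = 0$ or $x_i^{k-2} = \mu/\lambda_i$.
\begin{itemize}
\item If $k$ is even, $k-2$ is even, so $\mu > 0$ and $x_i = \pm(\mu/\lambda_i)^{1/(k-2)} \propto \xi_i \lambda_i^{-1/(k-2)}$ with arbitrary signs. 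Normalization fixes $\mu$, so every $x^\xi$ with $\xi \in \Xi$ is critical.
\item If $k$ is odd, $x_i^{k-2}$ carries the sign of $x_i$. Since $\lambda_i > 0$, every nonzero $x_i$ must have the sign of $\mu$, which gives the stated sign restriction. Note $\mu \neq 0$, since $x \neq 0$.
\end{itemize}
Part 2 then follows immediately. For even $k$, $\xi_i^k = 1$ for all nonzero $\xi_i$. For odd $k$, all nonzero $\xi_i$ share a common sign, so $\xi_i^k$ is constant.

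\emph{Hessian.} For part 3 I would use the standard formula for the Riemannian Hessian of a restriction to the sphere at a critical point:
\[
\mathrm{Hess}\, f = \nabla^2 F(x) - \mu I,
\]
restricted to $T_x S^{D-1} = x^\perp$. Here $\nabla^2 F = \mathrm{diag}((k-1)\lambda_i x_i^{k-2})$. On the support of $\xi$ we have $\lambda_i x_i^{k-2} = \mu$, and off the support this entry is $0$. Hence
\[
\nabla^2 F - \mu I = \mu\,[(k-1)P^\xi - I].
\]
It remains to compute $\mu$. Write $x_i = c\,\xi_i \lambda_i^{-1/(k-2)}$ with $c = 1/\|\lambda^{-1/(k-2)}\|_{L^2(|\xi|)}$. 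Then
\[
\mu = \lambda_i x_i^{k-2} = c^{k-2}\xi_i^{k-2}.
\]
This equals $\xi^k c^{k-2}$, since $\xi_i^{k-2} = \xi_i^k$ for $\xi_i = \pm1$. That is the stated formula. Computing $\mu$ and $f = \mu/k$ this way also gives $f(x^\xi) = \xi^k \nu_\xi$ as a byproduct, which is useful later.

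\emph{Index and nondegeneracy.} Part 4 comes from restricting to $x^\perp$. Note $x \in \mathrm{span}\{e_i : \xi_i \neq 0\}$, and write $m = |\{i : \xi_i \neq 0\}|$.
\begin{itemize}
\item The operator $(k-1)P^\xi - I$ has eigenvalue $k-2 > 0$ on the $(m-1)$-dimensional space $\mathrm{im}\,P^\xi \cap x^\perp$.
\item It has eigenvalue $-1$ on the $(D-m)$-dimensional space $\ker P^\xi$.
\end{itemize}
Both eigenvalues are nonzero because $k > 2$, so the critical point is nondegenerate. Multiplying by the sign $\xi^k$ of $\mu$ gives index $D-m$ when $\xi^k = 1$ and $m-1$ when $\xi^k = -1$. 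Part 5 follows from part 4.

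\emph{Main obstacle.} The only delicate step is justifying the Hessian formula. The ambient Hessian matrix is only meaningful as a bilinear form on the tangent space, and the $-\mu I$ correction comes from the second fundamental form of the sphere, or equivalently from the Hessian of the Lagrangian $F - \frac{\mu}{2}(|x|^2-1)$. I would verify this by differentiating $F(\gamma(t))$ twice along a great circle $\gamma(t) = x\cos t + v\sin t$ with $v \perp x$. This gives $v^T\nabla^2F\,v - \langle \nabla F, x\rangle = v^T(\nabla^2F - \mu I)v$, which avoids any need for a chart.
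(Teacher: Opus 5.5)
Your proposal is correct and takes essentially the same route as the paper. You use a Lagrange-multiplier argument for the critical points, a second-order expansion along the great circle $\gamma(t) = x\cos t + v\sin t$ to identify the Hessian as $\mu[(k-1)P^\xi - I]$ on $x^\perp$, and the splitting $x^\perp = (\text{im}\,P^\xi \cap x^\perp) \oplus \ker P^\xi$ to read off the index. You simply spell out a few details the paper leaves implicit, such as $\mu = k f_{k,D}(x) \neq 0$ and $f_{k,D}(x^\xi) = \xi^k \nu_\xi$.
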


\begin{remark}
We prove in \cref{thm:f_is_smale} that $f_{k,D}$ satisfies the Smale condition.
\end{remark}

\begin{proof}
The proof of the first point is a straight-forward Lagrange multiplier argument. The parity of $k$ matters because $t \mapsto t^{k-2}$ is bijective for odd $k$ but not for even $k$.

For the second point, if $k$ is odd, the $\xi^k$ is well-defined by the construction of $\mathcal{C}$; if $k$ is even, then $1^k = (-1)^k = 1$, and hence $(x^\xi_i)^k = 1$. The remainder of the second point is a straight-forward computation.

For the third point, consider a unit tangent vector $v$ of $S^{D-1}$ at $x^\xi$, and consider the curve $\gamma(t) = (\cos t)x + (\sin t) v$ on $S^{D-1}$ so that $\gamma(0) = x$ and $\gamma'(0) = v$.
Then Taylor expansion gives
\begin{align*}
f_{k,D}(\gamma(t))
&=\frac{1}{k}\sum \lambda_i (x^\xi_i \cos t + v_i \sin t )^k
\\&=\frac{1}{k} \sum \lambda_i (x^\xi_i + v_i t - \frac{x^\xi_i}{2}t^2 + O(t^3) )^k
\\&= f_{k,D}(x^\xi) + t \nabla f_{k,D}(x^\xi) \cdot v + \frac{t^2}{2} \sum \lambda_i (x^\xi_i)^{k-2}[(k-1) v_i^2 - (x^\xi_i)^2] + O(t^3).
\end{align*}
By the definition of $x^\xi$, if $\xi_i \neq 0$, then $\lambda_i (x^\xi_i)^{k-2} = \frac{\xi^{k-2}}{\|\lambda^{-\frac{1}{k-2}}\|_{L^2(|\xi|)}^{k-2}} = \frac{\xi^{k}}{\|\lambda^{-\frac{1}{k-2}}\|_{L^2(|\xi|)}^{k-2}}$. Thus,
$$f_{k,D}(\gamma(t)) = f_{k,D}(x^\xi) + t \nabla f_{k,D}(x^\xi) \cdot v + \frac{t^2}{2} \frac{\xi^{k}}{\|\lambda^{-\frac{1}{k-2}}\|_{L^2(|\xi|)}^{k-2}} [(k-1)\sum_{\xi_i \neq 0} v_i^2 - 1].$$
The third point then follows from the fact that the expression of the Hessian therein is the unique bilinear form that gives rise to the quadratic term above.
For the fourth point, recall that the tangent plane of the unit sphere at $x^\xi$ is $(x^\xi)^\perp$. If $\xi^k = 1$, then the Hessian is positive-definite on $(x^\xi)^\perp \cap \text{span }\{e_i : \xi_i \neq 0\}$ and it is negative-definite on $\text{span }\{e_i: \xi_i = 0\} \subseteq (x^\xi)^\perp$. The dimensions of these two spaces are $|\{i: \xi_i \neq 0\}| - 1$ and $D - |\{i: \xi_i \neq 0\}|$ respectively. The claim then follows.
The last point follows directly from the fourth point.
\end{proof}

Next, we describe the critical values of $f_{k,D}$ and their ordering on the real line.

\begin{lemma} \label{lem:critical_values_and_ordering}$\quad$
\begin{enumerate}
\item For each critical point $x^\xi \in \mathcal{C}$ of $f_{k,D}$, $f_{k,D}(x^\xi) = \xi^k \nu_\xi$.
\item For critical points $x^\xi, x^\zeta \in \mathcal{C}$, if $\xi^k = 1$ and $\zeta^k = -1$, then $f_{k,D}(x^\xi) > 0 > f_{k,D}(x^\zeta)$.
\end{enumerate}
\end{lemma}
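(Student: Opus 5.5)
The plan is to compute $f_{k,D}(x^\xi)$ directly from the definition of $x^\xi$ in \cref{eqn:critical_points}. Write $E = \{i : \xi_i \neq 0\}$, $\alpha = \frac{1}{k-2}$, and set
$$N = \|\lambda^{-\alpha}\|_{L^2(E)} = \Big(\sum_{i \in E} \lambda_i^{-2\alpha}\Big)^{1/2}.$$
Normalizing to the unit sphere gives $x^\xi_i = \xi_i \lambda_i^{-\alpha}/N$ for $i \in E$ and $x^\xi_i = 0$ otherwise. By the second point of \cref{lem:critical_points_of_tensor}, $\xi_i^k = \xi^k$ for every $i \in E$.

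Substituting, for $i \in E$ we get
$$\lambda_i (x^\xi_i)^k = \xi^k \lambda_i^{1 - k\alpha}/N^k.$$
Since $1 - k\alpha = \frac{k-2-k}{k-2} = -2\alpha$, summing over $E$ yields
$$f_{k,D}(x^\xi) = \frac{\xi^k}{k}\, N^{-k} \sum_{i \in E} \lambda_i^{-2\alpha} = \frac{\xi^k}{k}\, N^{2-k}.$$
Writing $N^{2-k} = \big(\sum_{i\in E}\lambda_i^{-\frac{2}{k-2}}\big)^{-\frac{k-2}{2}}$, this is exactly $\xi^k \nu_E = \xi^k \nu_\xi$ by \cref{eqn:nu_general_set,eqn:nu_signum_vector}. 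That proves the first point.

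The second point then follows immediately. Because all $\lambda_i > 0$ and $E \neq \emptyset$ (since $\xi \neq 0$), the sum $\sum_{i\in E}\lambda_i^{-2\alpha}$ is a finite positive number, so $\nu_\xi > 0$ and likewise $\nu_\zeta > 0$. Hence $f_{k,D}(x^\xi) = \nu_\xi > 0 > -\nu_\zeta = f_{k,D}(x^\zeta)$.

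There is no real obstacle here. The only step needing care is the exponent bookkeeping $1 - k/(k-2) = -2/(k-2)$, together with the observation that $\xi_i^k$ is the same constant $\xi^k$ across the support. For odd $k$ this constancy relies on the sign restriction in the definition of $\mathcal{C}$.
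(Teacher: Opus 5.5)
Your proposal is correct: normalizing $x^\xi$ onto the sphere, using the exponent identity $1 - \frac{k}{k-2} = -\frac{2}{k-2}$, and using that $\xi_i^k = \xi^k$ on the support gives $f_{k,D}(x^\xi) = \xi^k \nu_\xi$, and $\nu_\xi > 0$ then gives the second point. The paper states this lemma without a written proof and treats it as exactly this direct substitution; the same normalization and the same use of $\xi^k$ appear in its Hessian computation in \cref{lem:critical_points_of_tensor}, so your argument matches the intended one.
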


We now describe the differential of the Morse complex of $f_{k,D}$ with coefficients in $\mathbb{Z}/2\mathbb{Z}$ for even $k$ (Cf \cref{def:morse_complex_mod_2}). 

\begin{proposition}\label{prop:morse_differential_tensor}
If $\xi^k = \zeta^k = 1$, then
$$n(x^\xi, x^\zeta) = \begin{cases}
1 & \text{ if } \zeta = \xi + s e_i \text{ for some } s \in \{1, -1\} \text{ and some $i$ such that } \xi_i = 0 \\
0 & \text{ otherwise.}
\end{cases}$$
If $\xi^k = -1$, then
$$n(x^\xi, x^\zeta) = \begin{cases}
1 & \text{ if } \xi = \zeta - e_i \text{ for some $i$ such that } \zeta_i = 0 \\
0 & \text{ otherwise.}
\end{cases}$$
Hence, if $k$ is even, then
$$\partial x^\xi  = \sum_{\substack{1 \leq i \leq D \\ \xi_i = 0}} (x^{e_i + \xi} + x^{-e_i + \xi});$$
if $k$ is odd, $\xi^k = -1$, then
\begin{equation}\label{eqn:morse_differential_odd}
\partial x^\xi  = 
\begin{cases}
\sum_{\substack{1 \leq i \leq D \\ \xi_i = -1}} x^{e_i + \xi} & \text{ if } \{i: \xi_i \neq 0\} \text{ has more than one element,}\\
0 &\text{ otherwise.}
\end{cases}
\end{equation}
\end{proposition}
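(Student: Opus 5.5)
The plan is to exploit the fact that the gradient flow of $f_{k,D}$ on $S^{D-1}$ is coordinatewise multiplicative. Write the descent flow as
$$\dot x_i = -\lambda_i x_i^{k-1} + k f_{k,D}(x)\, x_i ,$$
so that $\frac{d}{dt}\log|x_i| = -\lambda_i x_i^{k-2} + k f_{k,D}(x)$ wherever $x_i\neq 0$.

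Two invariance facts follow at once. First, a coordinate that is zero stays zero along the flow. Second, a nonzero coordinate never changes sign. Consequently every coordinate subsphere $S_F=\{x: x_j=0 \text{ for } j\notin F\}$ is invariant, and so is every open orthant-face $H^\eta\cap S_{\operatorname{supp}\eta}$. A flow line from $x^\xi$ to $x^\zeta$ therefore lies in $S_{\operatorname{supp}\zeta}$ and never leaves its open orthant-face. This forces $\operatorname{supp}\xi\subseteq\operatorname{supp}\zeta$ in the first case, with the signs of $\zeta$ extending those of $\xi$. In the second case it forces the reverse inclusion, with the signs of $\xi$ extending those of $\zeta$. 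I will use these constraints together with the index formula of \cref{lem:critical_points_of_tensor}, the ordering of critical values in \cref{lem:critical_values_and_ordering}, and the monotonicity of $E\mapsto \nu_E$. Note that $\nu_E$ strictly decreases as $E$ grows, since $-\frac{k-2}{2}<0$.

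\textbf{Case $\xi^k=\zeta^k=1$.} The index is $D-|\operatorname{supp}|$, so an index drop of one forces $|\operatorname{supp}\zeta|=|\operatorname{supp}\xi|+1$. Combined with the support and sign constraints, this gives $\zeta=\xi+se_i$ with $\xi_i=0$, and hence $n=0$ for every other $\zeta$.

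For such a $\zeta$, set $F=\operatorname{supp}\zeta$. The Hessian in \cref{lem:critical_points_of_tensor} is negative definite exactly on the span of the $e_j$ with $\xi_j=0$, so within $S_F$ the unstable manifold of $x^\xi$ is one-dimensional. It consists of two branches, one with $x_i>0$ and one with $x_i<0$. The branch of sign $s$ stays in the open face $H^{\zeta}\cap S_F$, and $f$ strictly decreases along it. Its $\omega$-limit is a critical point $x^\eta$ in the closure of that face with $\operatorname{supp}\xi\subseteq\operatorname{supp}\eta\subseteq F$. The support of $\eta$ contains $\operatorname{supp}\xi$ because coordinates in $\operatorname{supp}\xi$ cannot reach a critical point with fewer support elements without raising the value. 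Since $\nu_\eta<\nu_\xi$ is required, $\eta$ must have support strictly larger than $\operatorname{supp}\xi$, so $\eta=\zeta$. Thus exactly one flow line goes from $x^\xi$ to $x^{\xi+se_i}$ for each $s$.

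A complementary way to see uniqueness of the interior target is the substitution $y_j=x_j^2$. For even $k$, $f$ becomes the convex function $\frac1k\sum\lambda_j y_j^{k/2}$ on a simplex, which has a unique interior minimizer.

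\textbf{Case $\xi^k=-1$.} This case occurs only for odd $k$, with $\xi\le 0$. The unstable directions now lie inside $\operatorname{supp}\xi$, and the index is $|\operatorname{supp}\xi|-1$. Since $f(x^\xi)<0$ and $f$ decreases along the flow, by \cref{lem:critical_values_and_ordering} the target satisfies $\zeta^k=-1$ and $\zeta\le 0$. The flow lies in $S_{\operatorname{supp}\xi}$, so $\operatorname{supp}\zeta\subseteq\operatorname{supp}\xi$. The index drop of one then gives $\operatorname{supp}\zeta=\operatorname{supp}\xi\setminus\{i\}$, that is, $\xi=\zeta-e_i$.

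It remains to show that exactly one flow line goes from $x^\xi$ to each facet point $x^{\xi+e_i}$, counted mod 2. Work in the closed negative orthant-cell $Q$ of $S_{\operatorname{supp}\xi}$, which is a topological $(m-1)$-disc where $m=|\operatorname{supp}\xi|$. Here $x^\xi$ is the unique interior critical point, and it is the maximum of $f$ on $Q$. Its unstable manifold is open and dense in $Q$. Each facet of $Q$ is invariant, and on its interior $x^{\xi+e_i}$ is the facet's maximum. Moreover $x^{\xi+e_i}$ has exactly one direction transverse to the facet, and that direction is stable because its index is one lower. The flow lines from $x^\xi$ to $x^{\xi+e_i}$ are therefore the intersection of the $(m-1)$-dimensional unstable manifold of $x^\xi$ with the one-dimensional stable manifold of $x^{\xi+e_i}$ transverse to the facet. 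That stable curve is a single trajectory entering the facet point from the interior of $Q$, and it must emanate from the only interior source, $x^\xi$. This gives exactly one flow line.

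When $m=1$, the point $x^\xi=-e_j$ has index $0$, so $\partial x^\xi=0$. Assembling these counts into \cref{def:morse_complex_mod_2} yields both displayed formulas for $\partial$.

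The main obstacle is rigor in the limit arguments. I need to show that interior trajectories cannot accumulate on lower-support critical points, and that the stable curve of $x^{\xi+e_i}$ really comes from $x^\xi$ rather than from the boundary. I would first try to control this with the explicit ratio ODE
$$\frac{d}{dt}\log\left|\frac{x_a}{x_b}\right| = -\bigl(\lambda_a x_a^{k-2}-\lambda_b x_b^{k-2}\bigr),$$
and the strict decrease of $\nu$. Transversality of these intersections is supplied by \cref{thm:f_is_smale}.
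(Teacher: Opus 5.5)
There is a genuine gap. Your outline has the same architecture as the paper's proof: support and sign constraints plus the index formula for the vanishing cases, then a one-dimensional unstable or stable manifold inside a coordinate subsphere, with sign invariance selecting the branch. Restricting to $S_F$ plays the role of the paper's induction on $D$. What is missing is the analytic fact that makes this work.

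Invariance of $\{x_j=0\}$ and of signs only places a connecting orbit in one open orthant-face whose \emph{closure} contains both endpoints. A priori an endpoint may have strictly smaller support than that face. So ``a flow line from $x^\xi$ to $x^\zeta$ therefore lies in $S_{\mathrm{supp}\,\zeta}$'' does not follow. Neither do the inclusions $\mathrm{supp}\,\xi\subseteq\mathrm{supp}\,\zeta$ (first case) and $\mathrm{supp}\,\zeta\subseteq\mathrm{supp}\,\xi$ (second case), and every ``otherwise'' statement rests on them.

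Your substitute for identifying the $\omega$-limit is that coordinates in $\mathrm{supp}\,\xi$ cannot be lost ``without raising the value''. This is false. Take $\xi=e_1+e_2$, $i=3$, $F=\{1,2,3\}$. The saddle $x^{e_1+e_3}$ lies in the closure of the face $H^{\zeta}\cap S_F$, and $\nu_{\{1,3\}}<\nu_{\{1,2\}}$ whenever $\lambda_3<\lambda_2$. Its index in $S_F$ is $1$, so neither value nor index rules it out as the endpoint of the branch. The two items you list as ``the main obstacle'' are exactly the unproved steps.

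The missing idea is the paper's \cref{lem:never_descent_to_zero}. It follows in one line from the log-derivative you already wrote down:
\begin{itemize}
\item If $x(t)\to x^\zeta$ as $t\to+\infty$ with $\zeta^k=1$ and $\zeta_j=0$, then $\frac{d}{dt}\log|x_j| = kf_{k,D}(x)-\lambda_j x_j^{k-2}\to k\nu_\zeta>0$. So $|x_j|$ grows exponentially unless $x_j\equiv 0$.
\item Symmetrically, if $x(t)\to x^\xi$ as $t\to-\infty$ with $\xi^k=-1$ and $\xi_j=0$, the rate tends to $-k\nu_\xi<0$. So $|x_j|$ blows up backward in time unless $x_j\equiv 0$.
\end{itemize}
What closes the argument is the sign of the limiting critical value, not the ratio ODE or the monotonicity of $\nu$.

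This lemma repairs each step:
\begin{itemize}
\item It gives the support inclusions and confines every connecting orbit to $S_{\mathrm{supp}\,\zeta}$ (resp.\ $S_{\mathrm{supp}\,\xi}$).
\item Apply it to the $\omega$-limit $x^\eta$ of the sign-$s$ branch. Here $\eta^k=1$: automatically for even $k$, and for odd $k$ because the face lies in the positive orthant. It forces $\mathrm{supp}\,\eta=F$, hence $\eta=\zeta$.
\item Apply it to the $\alpha$-limit of the stable curve of $x^{\xi+e_i}$ entering $Q$, all of whose coordinates in $\mathrm{supp}\,\xi$ are nonzero. It forces that limit to be $x^\xi$ rather than a boundary critical point.
\end{itemize}
With this lemma inserted, your argument becomes the paper's proof.
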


We need the following lemma to prove \cref{prop:morse_differential_tensor}.
\begin{lemma}\label{lem:never_descent_to_zero}
Consider the gradient descent of $f_{k,D}$ on $S^{D-1}$:
$$x' = - [\nabla f_{k,D}(x) - (\nabla f_{k,D}(x) \cdot x) x],$$
where the last term projects the gradient back to the tangent space.
Let $x^\zeta = \lim_{t \to +\infty} x(t)$ and $x^\xi = \lim_{t \to -\infty} x(t)$. 
\begin{enumerate}
\item If, for some $i$, $\xi_i$ and $\zeta_i$ are both nonzero, then they have the same sign.
\item If $\zeta^k = 1$ and $\zeta_i = 0$, then $\xi_i = 0$. Further, $x_i(t) \equiv 0$.
\item If $\xi^k = -1$ and $\xi_i = 0$, then $\zeta_i = 0$. Further, $x_i(t) \equiv 0$.
\end{enumerate}
\end{lemma}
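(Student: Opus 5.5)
The plan is to exploit that the gradient flow decouples coordinatewise. Write $g_i = \partial_i f_{k,D}(x) = \lambda_i x_i^{k-1}$ and $\mu(x) = \nabla f_{k,D}(x)\cdot x = k f_{k,D}(x)$. The flow then reads
$$x_i' = -\lambda_i x_i^{k-1} + k f_{k,D}(x)\, x_i = x_i\,\bigl(k f_{k,D}(x) - \lambda_i x_i^{k-2}\bigr).$$
This is of the form $x_i' = x_i\, h_i(t)$ with $h_i$ continuous along the trajectory. Hence $x_i(t) = x_i(0)\exp\bigl(\int_0^t h_i\bigr)$. Consequently, if $x_i$ vanishes at some time it vanishes identically, and otherwise it never changes sign. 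Every other part of the argument comes from this one observation, applied to the limits $t\to\pm\infty$.

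For the first point, I would suppose $\xi_i$ and $\zeta_i$ are both nonzero. Then $x_i(t)$ is nonzero near both ends, so $x_i \not\equiv 0$. By the observation it never vanishes, so it has constant sign. Since the limits $x^\xi_i$ and $x^\zeta_i$ are nonzero and carry the signs of $\xi_i$ and $\zeta_i$, these signs must agree.

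For the second point, suppose $\zeta^k = 1$, $\zeta_i = 0$, and, for contradiction, $x_i \not\equiv 0$. Then $x_i(t)\to 0$ as $t\to+\infty$ with $x_i$ never zero. Near $t=+\infty$ the growth rate is
$$h_i(t) \to k f_{k,D}(x^\zeta) - 0 = k\nu_\zeta > 0$$
by \cref{lem:critical_values_and_ordering}, using $\zeta^k = 1$ and $x^\zeta_i = 0$ with $k>2$. So $|x_i|$ is eventually strictly increasing, which contradicts $x_i\to 0$. Thus $x_i\equiv 0$, and in particular $\xi_i = \lim_{t\to-\infty} x_i = 0$.

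The third point is symmetric, run backwards in time. If $\xi^k=-1$, $\xi_i=0$ and $x_i\not\equiv 0$, then as $t\to-\infty$ we have $h_i \to k f_{k,D}(x^\xi) = -k\nu_\xi < 0$. Hence $|x_i|$ is eventually decreasing in forward time near $-\infty$, meaning it grows as $t\to-\infty$. This is incompatible with $x_i\to 0$ as $t\to-\infty$. Therefore $x_i\equiv 0$ and $\zeta_i=0$.

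The only delicate step is justifying the limits of $h_i$ and the eventual monotonicity rigorously. I would handle it by noting that the convergence $x(t)\to x^\zeta$ (respectively $x^\xi$) and continuity give $h_i(t)\to k f_{k,D}(x^\zeta)$. Then I would choose $T$ with $h_i > k\nu_\zeta/2$ for $t>T$, so that $|x_i(t)| \ge |x_i(T)| e^{k\nu_\zeta (t-T)/2}$. This is a clean exponential bound and gives the contradictions directly.
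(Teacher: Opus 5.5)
Your proposal is correct and follows essentially the same route as the paper. Both arguments rest on the coordinatewise form $x_i' = x_i\,(k f_{k,D}(x) - \lambda_i x_i^{k-2})$, which gives invariance of $\{x_i = 0\}$ and sign preservation, and on the growth rate tending to $k f_{k,D}(x^\zeta) > 0$ (resp.\ $k f_{k,D}(x^\xi) < 0$ backwards in time), which forces exponential growth. The differences are cosmetic: you make the integrating-factor argument explicit where the paper only asserts invariance, and you derive the contradiction from $x_i \to 0$ rather than from boundedness on the sphere.
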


\begin{proof}
The first claim follows from the fact that each $\{x \in S^{D-1} : x_i = 0\}$ is invariant under the gradient flow of $f_{k,D}$.



For the second claim, evaluating $\nabla f_{k,D}$ gives
$$x_i' = (k f_{k,D}(x) - \lambda_i x_i^{k-2}) x_i.$$
Since $x^\zeta_i = 0$, we have
$$x_i' = (k f_{k,D} (x^\zeta) + o(1)) x_i$$
as $t \to +\infty$. Since $f_{k,D}(x^\zeta) > 0$, either $x_i \equiv 0$, or $x_i$ contradictorily grows exponentially on the unit sphere. The second claim then follows from the invariance of $\{x \in S^{D-1} : x_i = 0\}$ under the gradient flow.

The third claim is similar.
\end{proof}

We now prove \cref{prop:morse_differential_tensor}.
\begin{proof}[Proof of \cref{prop:morse_differential_tensor}]

We first prove the first claim (assuming $\xi^k = \zeta^k = 1$).

We first show the ``otherwise" statement. Suppose $x^\xi$ and $x^\zeta$ are in adjacent chain vector spaces, and there is at least one flow line from $x^\xi$ to $x^\zeta$. We will show that $\zeta$ and $\xi$ have the desired form.

Let $\text{supp }\zeta = \{i: \zeta_i \neq 0\}$ and $\text{supp }\xi = \{i: \xi_i \neq 0\}$. Then the second point of \cref{lem:never_descent_to_zero} shows that $\text{supp } \xi \subseteq \text{supp } \zeta $. By the fourth point in \cref{lem:critical_points_of_tensor}, for $x^\xi$ and $x^\zeta$ to be in adjacent chain vector spaces, $\zeta$ must have exactly one more nonzero entry than $\xi$. Hence there is a unique $i \in \text{supp } \zeta - \text{supp } \xi$. The first point of \cref{lem:never_descent_to_zero} implies that for $j \in \text{supp }\xi$, $\xi_j$ and $\zeta_j$ in fact have the same sign, and hence they must be equal (because they are $\pm 1$). The ``otherwise" statement then follows.

To prove the proposition, it remains to establish that there is exactly one flow line from $x^\xi$ to $x^\zeta$ if they are of the form prescribed by the proposition. We prove this by induction on the ambient dimension $D$. If $D = 1$, the proposition is vacuously true. Suppose the proposition is true for dimension $D-1$. If $\zeta_i = 0$ for some $i$, then $x^\xi, x^\zeta$ lie in the $(D-2)$-dimensional unit sphere $S^{D-2}$ in $\text{span } \{e_1, ..., e_{i-1}, e_{i+1}, ..., e_D\}$, and the second point of \cref{lem:never_descent_to_zero} implies that any gradient flow line between them lies in $S^{D-2}$ as well. Therefore, the number of downward gradient flow lines between them on $S^{D-1}$ is given by the induction hypothesis.  

It remains to consider the case when $\zeta_i \neq 0$ for every $i$. Then $\xi_i = 0$ for exactly one $i$. By the fourth point in \cref{lem:critical_points_of_tensor}, $x^\zeta$ is a local minimum and the index of $x^\xi$ is 1. The form of the Hessian at $x^\xi$ (Cf. the third point in \cref{lem:critical_points_of_tensor}) implies that there are two descent flow lines from $x^\xi$, one tangent to $e_i$ and the other one tangent to $-e_i$. Since
$$\{x \in S^{D-1}: x_i \geq 0, \text{ and for $j \neq i$, $x_j$ and $\xi_j$ have the same (non-strict) sign}\}$$
is invariant under gradient descent, the flow line tangent to $e_i$ converges to a critical point $\hat \zeta$ in this set. By the two points in \cref{lem:never_descent_to_zero}, $\hat \zeta_j = \xi_j$ for $j \neq i$. Since this limit cannot be $\xi$ itself, it can only be $\xi + e_i$. The case for $\xi - e_i$ is similar.

The second claim is largely similar with the first claim. The only difference is in considering the case when $\xi_j \neq 0$ for every $j$ with $\zeta = \xi + e_i$, parallel to the previous paragraph. In this case, $\zeta$ is index $D-2$ and has one ascent flow line into $\{x \in S^{D-1}: x_i \leq 0\}$ and one out of it. By the first claim of \cref{lem:never_descent_to_zero}, the latter cannot flow back to $\xi$. The former flows to $\xi$ by a sign invariance argument analogous to the one in the previous paragraph.

The remaining claims of the proposition directly follow from the first two claims.
\end{proof}

\section{Recurrence on Morse Homology for Even $k$}
\label{sec:morse_recurrence_even}

Let $C_D$ be the concatenated complex of the Morse complex of $ f_{k,D}$ on $S^{D-1}$ for an even $k$, with concatenated differential $\partial_D: C_D \to C_D$. Let 
\begin{align*}
C_{D-1}^+ &= \{x^\xi \in C_D: \xi_D = 1\}\\
C_{D-1}^0 &= \{x^\xi \in C_D: \xi_D = 0\}\\
C_{D-1}^- &= \{x^\xi \in C_D: \xi_D = -1\}.
\end{align*}
Then $C_D$ admits the recurrence $C_D = C_{D-1}^+ \cup C_{D-1}^- \cup C_{D-1}^0 \cup \{x^{e_D}, x^{-e_D}\}$, where $C_{D-1}$ is in bijective correspondence with $C_{D-1}^+$ through $x^\xi \xmapsto{I_{D-1}^+} x^{\xi + e_D}$. The bijection $I_{D-1}^-$ is defined analogously, and these bijections extend linearly to isomorphisms between the spans of their domains and codomains.

\begin{proposition}\label{prop:morse_differential_block_structure}
Let $u_D = \sum_{1 \leq i \leq D} (x^{e_i} + x^{-e_i})$, and $u^+_{D-1} = I^+_{D-1} u_{D-1}$ and $u^-_{D-1} = I^-_{D-1} u_{D-1}$. Then the concatenated differential $\partial_D$ has the following block structure:
\[
\begin{blockarray}{cccccc}
  & C_{D-1}^+ & C_{D-1}^- & C_{D-1}^0 & x^{e_D} & x^{-e_D} \\
\begin{block}{c[ccccc]}
  C_{D-1}^+    & I^+_{D-1} \partial_{D-1} (I^+_{D-1})^{-1} &                                   & I^+_{D-1}          & u^+_{D-1} &     \\
  C_{D-1}^-    &                                   & I^-_{D-1} \partial_{D-1} (I^-_{D-1})^{-1} & I^-_{D-1}          &     & u^-_{D-1} \\
  C_{D-1}^0    &                                   &                                   & \partial_{D-1} &     &       \\
  x^{e_D}      &                                   &                                   &                &     &       \\
  x^{-e_D}     &                                   &                                   &                &     &       \\
  \end{block}
\end{blockarray},
\]
where empty entries are zero, and we identify $C_{D-1}^0$ with $C_{D-1}$.
\end{proposition}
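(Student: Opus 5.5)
The block structure follows by reading off the even-$k$ differential formula in \cref{prop:morse_differential_tensor},
$$\partial_D x^\xi = \sum_{\xi_i = 0}\left(x^{\xi+e_i} + x^{\xi - e_i}\right),$$
column by column. The columns are indexed by the five pieces of the decomposition $C_D = C_{D-1}^+ \cup C_{D-1}^- \cup C_{D-1}^0 \cup \{x^{e_D}, x^{-e_D}\}$. For each type of generator $x^\xi$ we split the sum according to whether $i < D$ or $i = D$, and we record into which piece each resulting term $x^{\xi \pm e_i}$ falls. Throughout, we identify a signum vector of length $D-1$ with its extension by $\xi_D = 0$. Under this identification, $I^\pm_{D-1} x^\eta = x^{\eta \pm e_D}$.

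\textbf{Column $C_{D-1}^0$.} Take $\xi$ with $\xi_D = 0$ and $\xi \neq 0$ on the first $D-1$ coordinates.
\begin{itemize}
\item The terms with $i < D$ again have last coordinate $0$. Their sum is exactly $\partial_{D-1} x^\xi$ computed on $S^{D-2}$, since the formula for $\partial_{D-1}$ is the same sum over the first $D-1$ coordinates. This gives the $\partial_{D-1}$ block.
\item The term with $i = D$ is $x^{\xi + e_D} + x^{\xi - e_D} = I^+_{D-1} x^\xi + I^-_{D-1} x^\xi$. This gives the identity-type blocks $I^\pm_{D-1}$ in the $C^\pm_{D-1}$ rows.
\end{itemize}

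\textbf{Column $C_{D-1}^+$.} Take $\xi = \eta + e_D$ with $\eta$ a nonzero signum vector of length $D-1$. Since $\xi_D \neq 0$, only $i < D$ contributes. Each term $x^{\xi \pm e_i} = I^+_{D-1} x^{\eta \pm e_i}$ lies in $C^+_{D-1}$, so the column equals $I^+_{D-1}\partial_{D-1}\eta = I^+_{D-1}\partial_{D-1}(I^+_{D-1})^{-1}\xi$. The column $C_{D-1}^-$ is handled symmetrically.

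\textbf{Columns $x^{\pm e_D}$.} For $x^{e_D}$, every $i < D$ contributes $x^{e_D + e_i} + x^{e_D - e_i} = I^+_{D-1}(x^{e_i} + x^{-e_i})$. Summing over $i < D$ gives $I^+_{D-1} u_{D-1} = u^+_{D-1}$, and there is no $i = D$ term. The column for $x^{-e_D}$ similarly gives $u^-_{D-1}$.

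\textbf{Rows $x^{\pm e_D}$.} These rows vanish because no $\partial x^\xi$ produces $x^{\pm e_D}$. Indeed, every term $x^{\xi \pm e_i}$ in $\partial x^\xi$ has strictly larger support than $x^\xi$, so its support has size at least $2$, whereas $x^{\pm e_D}$ has support of size $1$.

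The main thing to handle carefully is the bookkeeping convention: the identification of $C^0_{D-1}$ with $C_{D-1}$, and the fact that $\partial_{D-1}$ on $S^{D-2}$ is given by the same formula restricted to indices $i \le D-1$. There is no analytic difficulty, since all the dynamics has already been absorbed into \cref{prop:morse_differential_tensor}.
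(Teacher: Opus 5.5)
Your proposal is correct and follows the paper's proof essentially verbatim: a column-by-column reading of the even-$k$ differential formula from \cref{prop:morse_differential_tensor}, splitting off the $i = D$ term for the $C^0_{D-1}$ column and noting that only $i < D$ contributes for the $C^\pm_{D-1}$ and $x^{\pm e_D}$ columns. Your support-size check that the $x^{\pm e_D}$ rows vanish is redundant once each column is computed, but harmless. Your restriction to nonzero $\eta$ in $C^+_{D-1}$ (so that $x^{\pm e_D}$ are not double-counted) is a clarification that the paper leaves implicit.
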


\begin{proof}
We check the block structure holds column by column.
For the first column, let $x^\xi \in C_{D-1}^+$ and $\zeta = (I_{D-1}^+)^{-1} x^\xi = \xi - e_D$. Then
\begin{align*}
\partial_D x^\xi
&= \sum_{\substack{1 \leq i \leq D \\ \xi_i = 0}} x^{e_i + \xi} + x^{-e_i + \xi}
\\&= \sum_{\substack{1 \leq i \leq D-1 \\ \xi_i = 0}} x^{e_i + \xi} + x^{-e_i + \xi} \qquad \text{(because $\xi_D = 1 \neq 0$)}
\\I^+_{D-1} \partial_{D-1} (I^+_{D-1})^{-1} x^\xi
&= I^+_{D-1} \partial_{D-1} \zeta
\\&= I^+_{D-1} \sum_{\substack{1 \leq i \leq D-1 \\ \zeta_i = 0}} x^{e_i + \zeta} + x^{-e_i + \zeta}
\\&= \sum_{\substack{1 \leq i \leq D-1 \\ \zeta_i = 0}} x^{e_i + \zeta + e_D} + x^{-e_i + \zeta + e_D}
\\&= \sum_{\substack{1 \leq i \leq D-1 \\ \zeta_i = 0}} x^{e_i + \xi} + x^{-e_i + \xi},
\end{align*}
which is the same as $\partial_D x^\xi$ because for $1 \leq i \leq D-1$, $\xi_i = 0$ if and only if $\zeta_i = 0$.
The second column is analogous. For the third column, let $\xi_D = 0$. Then
\begin{align*}
\partial_D x^\xi
&= \sum_{\substack{1 \leq i \leq D \\ \xi_i = 0}} x^{e_i + \xi} + x^{-e_i + \xi}
\\&= x^{e_D + \xi} + x^{-e_D + \xi} + \sum_{\substack{1 \leq i \leq D-1 \\ \xi_i = 0}} x^{e_i + \xi} + x^{-e_i + \xi}
\\&= I_{D-1}^+ x^\xi + I_{D-1}^- x^\xi + \partial_{D-1} x^\xi.
\end{align*}
For the fourth column, $$\partial_D x^{e_D} = \sum_{1 \leq i \leq D-1} (x^{e_i + e_D} + x^{- e_i + e_D}) = \sum_{1 \leq i \leq D-1} (I_{D-1}^+ x^{e_i} + I_{D-1}^+ x^{- e_i}) = I_{D-1}^+ u_{D-1}.$$
The fifth column is analogous.
\end{proof}

Before column reducing $\partial_D$, we first order $C_D$, and we establish a lemma about lowest generators.

\begin{definition}\label{def:order_on_chain_group}
The total order $\prec$ on $C_D \cup \{-1, 0, 1\}$ is defined as follows.
\begin{enumerate}
\item $x^\xi \prec x^\zeta$ whenever $f_{k,D}(x^\xi) < f_{k,D}(x^\zeta)$,
\item $1 \prec -1 \prec 0$, 
\item If $\nu_\xi = \nu_\zeta$, then the two are ordered lexicographically, i.e. $x^\xi \prec x^\zeta$ if $\xi_i \prec \zeta_i$, where $i$ is the first entry (in the usual ordering of $1, ..., D$) where $x^\xi$ and $x^\zeta$ differ.
\end{enumerate}
\end{definition}

The lemma below summarizes the only properties we need about $\prec$ defined above.
\begin{lemma} The following statements are true regarding $\prec$.
\begin{enumerate}
\item $e_1 \prec -e_1$
\item $I^+_D$ and $I^-_D$ are monotone with respect to $\prec$ for every $D \geq 1$.
\item For each $x^\xi \in C_{D-1}$, $I^+_{D-1}(x^\xi) \prec I^-_{D-1}(x^\xi)$.
\item $x^\xi \prec x^\zeta$ whenever $f_{k,D}(x^\xi) < f_{k,D}(x^\zeta)$.
\end{enumerate}
\end{lemma}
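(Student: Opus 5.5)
The plan is to check each of the four statements directly against \cref{def:order_on_chain_group}. Every step reduces to one observation. For even $k$, \cref{lem:critical_values_and_ordering} gives $f_{k,D}(x^\xi) = \nu_\xi$, and $\nu_\xi$ depends only on the support $\{i : \xi_i \neq 0\}$ of $\xi$, not on the signs. So the comparison of $x^\xi$ and $x^\zeta$ first compares $\nu_\xi$ and $\nu_\zeta$. Only when these are equal does it fall back on the lexicographic order of the entries, using the order $1 \prec -1 \prec 0$.

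\textbf{Points 1 and 4.} Point 4 is clause 1 of the definition verbatim. For point 1, read $e_1 \prec -e_1$ as $x^{e_1} \prec x^{-e_1}$. Both vectors have support $\{1\}$, so $\nu_{e_1} = \nu_{-e_1}$ and clause 3 applies. They first differ at entry $1$, where $1 \prec -1$, so $x^{e_1} \prec x^{-e_1}$.

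\textbf{Point 3.} Fix $x^\xi \in C_{D-1}$. Then $I^\pm_{D-1}x^\xi = x^{\xi \pm e_D}$. Both images have support $\text{supp}\,\xi \cup \{D\}$, so their $\nu$-values coincide. The vectors agree in entries $1, \dots, D-1$ and first differ at entry $D$, where $1 \prec -1$. Hence $I^+_{D-1}x^\xi \prec I^-_{D-1}x^\xi$.

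\textbf{Point 2.} This is the only step needing an actual comparison. Take $x^\xi \prec x^\zeta$ in $C_D$; I will show $x^{\xi+e_{D+1}} \prec x^{\zeta+e_{D+1}}$, and the case of $I^-_D$ is identical.

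\emph{Case $\nu_\xi \neq \nu_\zeta$.} Then $\nu_\xi < \nu_\zeta$. Write $a = \sum_{i \in \text{supp}\,\xi} \lambda_i^{-2/(k-2)}$ and $b$ for the analogous sum over $\text{supp}\,\zeta$, so that $\nu_\xi = \frac1k a^{-(k-2)/2}$. Since $k > 2$, the map $s \mapsto \frac1k s^{-(k-2)/2}$ is strictly decreasing on $(0,\infty)$. Therefore $\nu_\xi < \nu_\zeta$ means $a > b$. Appending the index $D+1$ adds the same positive term $c = \lambda_{D+1}^{-2/(k-2)}$ to both sums, giving $a + c > b + c$. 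Applying the decreasing map again yields $\nu_{\xi+e_{D+1}} < \nu_{\zeta+e_{D+1}}$, so clause 1 gives the conclusion.

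\emph{Case $\nu_\xi = \nu_\zeta$.} By the same computation the $\nu$-values stay equal after appending. The lexicographic comparison is decided within the first $D$ entries, since that is where $\xi$ and $\zeta$ differ, and appending a common last entry does not change it.

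The key fact in point 2 is the strict monotonicity of $s \mapsto s^{-(k-2)/2}$, which uses $k > 2$. Note also that $I^\pm_D$ land in $C_{D+1}$, so the inequality being checked is for $f_{k,D+1}$; this is why the argument works with $\nu$ rather than $f_{k,D}$ directly. I do not expect any genuine obstacle; the only care needed is this bookkeeping.
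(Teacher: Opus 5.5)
Your proposal is correct and is the same direct check the paper intends; the paper's proof is the single line ``Direct verification,'' and you carry it out clause by clause. The one substantive step is point 2. There, appending the index $D+1$ adds the same positive term $\lambda_{D+1}^{-2/(k-2)}$ to both power sums, so strict inequalities and ties of $\nu$ are both preserved, and a common last entry leaves the lexicographic tie-break unchanged. You handle this correctly.
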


\begin{proof}
Direct verification.
\end{proof}

\begin{lemma}\label{lem:low_boundary}
Let $x^\xi \in C_D$. Let $\xi_i$ be the first zero entry of $\xi$. Then
$\text{low}(\partial_D x^\xi) = x^{-e_i + \xi}$.
\end{lemma}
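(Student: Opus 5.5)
The plan is to combine the explicit differential from \cref{prop:morse_differential_tensor} with the critical values from \cref{lem:critical_values_and_ordering}. Everything then reduces to comparing, under $\prec$, the finitely many generators that appear in $\partial_D x^\xi$.

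By \cref{prop:morse_differential_tensor}, for even $k$,
$$\partial_D x^\xi=\sum_{\substack{1\le j\le D\\ \xi_j=0}}\bigl(x^{\xi+e_j}+x^{\xi-e_j}\bigr).$$
All these generators are distinct, so no cancellation occurs mod $2$. By the definition of the lowest generator, $\mathrm{low}(\partial_D x^\xi)$ is the $\prec$-maximal generator in this sum. Since $k$ is even, every critical point has $\zeta^k=1$. So by \cref{lem:critical_values_and_ordering},
$$f_{k,D}(x^{\xi\pm e_j})=\nu_{\mathrm{supp}\,\xi\cup\{j\}}=\frac1k\Bigl(S+\lambda_j^{-\frac{2}{k-2}}\Bigr)^{-\frac{k-2}{2}},\qquad S=\sum_{\ell\in\mathrm{supp}\,\xi}\lambda_\ell^{-\frac{2}{k-2}}.$$
This is nondecreasing in $\lambda_j$, because $\lambda\mapsto\lambda^{-2/(k-2)}$ is decreasing and $s\mapsto s^{-(k-2)/2}$ is decreasing. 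Since $\lambda_1\ge\dots\ge\lambda_D$, the largest value among the terms is attained at the smallest zero index $j=i$. If $\lambda_i>\lambda_j$ for every other zero index $j$, the first rule of \cref{def:order_on_chain_group} already makes $x^{\xi\pm e_i}$ dominate every $x^{\xi\pm e_j}$ with $j\neq i$.

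It remains to break the tie between $x^{\xi+e_i}$ and $x^{\xi-e_i}$, which have equal values. These differ only in coordinate $i$, where they carry $1$ and $-1$ respectively. Since $1\prec -1$, the lexicographic rule gives $x^{\xi+e_i}\prec x^{\xi-e_i}$, and hence $\mathrm{low}(\partial_D x^\xi)=x^{\xi-e_i}$.

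The main obstacle is the case of ties $\lambda_i=\lambda_j$ for some later zero index $j>i$, where the third rule of \cref{def:order_on_chain_group} must decide. The first coordinate where $\xi-e_i$ and $\xi\pm e_j$ differ is $i$, with entries $-1$ versus $0$. With the convention $1\prec -1\prec 0$ as literally written, $x^{\xi\pm e_j}$ would come out larger, so the claimed conclusion would fail. I would therefore first check whether the intended lexicographic comparison makes a nonzero entry in an earlier coordinate dominate. One natural reading is to compare $0$ below $\pm1$ in this step, or equivalently to reverse the direction of the lexicographic rule. I would fix that convention, or else assume the $\lambda$'s are distinct and recover ties by a perturbation and continuity argument. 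Under such a convention the tie case also yields $x^{\xi-e_i}$. Once the convention is settled, the rest is the monotonicity computation above.
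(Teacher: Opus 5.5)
Your argument is the paper's own: read off $\partial_D x^\xi$ from \cref{prop:morse_differential_tensor}, note that $x^{\xi\pm e_i}$ have the largest critical value, and break the remaining tie with $1\prec -1$; the paper's one-line proof tacitly assumes $\lambda_i>\lambda_j$ for every later zero index $j$. Your tie concern is well founded, since for $\lambda_1=\lambda_2=\lambda_3$ and $\xi=(1,0,0)$ the literal rule $1\prec-1\prec0$ gives $\mathrm{low}(\partial_3x^\xi)=x^{(1,0,-1)}$ rather than $x^{(1,-1,0)}$, and ranking $0$ below $\pm1$ (i.e.\ $0\prec1\prec-1$) fixes this while keeping every property of $\prec$ the paper lists; however, your ``equivalently, reverse the lexicographic rule'' only works if it means comparing at the \emph{last} differing coordinate (reversing the value order would also flip $1\prec-1$ and give $x^{\xi+e_i}$), and a perturbation argument can recover the persistence diagrams at ties via stability but cannot prove this lemma, which is a statement about one fixed order.
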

\begin{proof}
$\partial_D x^\xi = \sum_{\xi_j = 0} x^{e_j + \xi} + x^{-e_j + \xi}$. The critical points $x^{\pm e_i + \xi}$ attain the highest function values. By our convention, $x^{-e_i + \xi}$ is the lowest generator.
\end{proof}

Now, we column reduce $\partial_D$ inductively to establish a recurrence concerning the persistence diagrams of $T_k$ on $S^{D-1}$. Our goal is the last two points of \cref{prop:column_reduce_boundary}.

First, we define $V_2$ and $R_2$ as follows.
\begin{align}
\partial_2: &
\begin{blockarray}{cccccccccc}
  && +&-&+&-&+&-&0&0 \\
  && +&+&-&-&0&0&+&- \\
\begin{block}{cc[cccccccc]}
  +&+    & 0 &   &   &   & 1 &   & 1 &   \\
  -&+    &   & 0 &   &   &   & 1 & 1 &   \\
  +&-    &   &   & 0 &   & 1 &   &   & 1 \\
  -&-    &   &   &   & 0 &   & 1 &   & 1 \\
  +&0    &   &   &   &   & 0 &   &   &   \\
  -&0    &   &   &   &   &   & 0 &   &   \\
  0&+    &   &   &   &   &   &   & 0 &   \\
  0&-    &   &   &   &   &   &   &   & 0 \\
  \end{block}
\end{blockarray} \notag
\\R_2: &
\begin{blockarray}{cccccccccc}
  && +&-&+&-&+&-&0&0 \\
  && +&+&-&-&0&0&+&- \\
\begin{block}{cc[cccccccc]}
  +&+    & 0 &   &   &   & 1 &   & 1 &   \\
  -&+    &   & 0 &   &   &   &   & 1 &   \\
  +&-    &   &   & 0 &   & 1 &   &   & 1 \\
  -&-    &   &   &   & 0 &   &   &   & 1 \\
  +&0    &   &   &   &   & 0 &   &   &   \\
  -&0    &   &   &   &   &   & 0 &   &   \\
  0&+    &   &   &   &   &   &   & 0 &   \\
  0&-    &   &   &   &   &   &   &   & 0 \\
  \end{block}
\end{blockarray} \notag
\\
V_2: &
\begin{blockarray}{cccccccccc}
  && +&-&+&-&+&-&0&0 \\
  && +&+&-&-&0&0&+&- \\
\begin{block}{cc[cccccccc]}
  +&+    & 1 &   &   &   &   &   &   &   \\
  -&+    &   & 1 &   &   &   &   &   &   \\
  +&-    &   &   & 1 &   &   &   &   &   \\
  -&-    &   &   &   & 1 &   &   &   &   \\
  +&0    &   &   &   &   & 1 & 1 &   &   \\
  -&0    &   &   &   &   &   & 1 &   &   \\
  0&+    &   &   &   &   &   & 1 & 1 &   \\
  0&-    &   &   &   &   &   & 1 &   & 1 \\
  \end{block}
\end{blockarray},
\end{align}
where the signs outside of the matrices refer to the signs of the basis elements corresponding to their rows and columns, e.g. the second column and the second row correspond to $x^{(-1, 1)} = x^{-e_1 + e_2}$, and the last row and last column correspond to $x^{(0, -1)} = x^{-e_2}$; all empty entries are 0 (the diagonal zero entries in $\partial_2$ and $R_2$ are kept to facilitate visual alignment of matrix elements). Note that basis elements are ordered by $C_1^+$, $C_1^-$, $C_1^0$, $x^{e_2}$, $x^{-e_2}$.

Then we construct $V_D$ and $R_D$ inductively. Let
\begin{align*}
L_D &= \{x^\xi \in C_D: x^\xi = \text{low}(x^\zeta) \text{ in relation to $R_D$ for some } x^\zeta\}
\\\mathbf{1}_D &= \sum_{1 \leq i \leq D} e_i
\\ B_D &= C_D - L_D - \{x^{\mathbf{1}_D}, x^{-e_1}\}.
\end{align*}
Note that assuming $R_{D-1}$ is reduced, then $\text{low}$ is a bijective function between $L_{D-1}$ and $\{x^\zeta: R_{D-1} x^\zeta \neq 0\}$. Under this assumption, we define $V_D$ and $R_D$ as follows.
\begin{align}\label{eqn:construct_factorization}
R_D: &
\begin{blockarray}{ccccccccc}
 & C_{D-1}^+ & C_{D-1}^- & x^{\mathbf{1}_{D-1}} & L_{D-1} & B_{D-1} & x^{-e_1} & x^{e_D} & x^{-e_D} \\
\begin{block}{c[cccccccc]}
  C_{D-1}^+    & I^+_{D-1} R_{D-1}(I^+_{D-1})^{-1} &                           & I^+_{D-1} & & I^+_{D-1}          & & u^+_{D-1}   &          \\
  C_{D-1}^-    &                           & I^-_{D-1} R_{D-1}(I^-_{D-1})^{-1} & I^-_{D-1} & & I^-_{D-1}          & &         & u^-_{D-1}    \\
  C_{D-1}^0    &                           &                           &       & & \partial_{D-1} & &         &          \\
  x^{e_D}  &                           &                           &       & &                & &         &          \\
  x^{-e_D} &                           &                           &       & &                & &         &          \\
  \end{block}
\end{blockarray},
\notag
\\
V_D: &
\begin{blockarray}{ccccccccc}
  & C_{D-1}^+ & C_{D-1}^- & x^{\mathbf{1}_{D-1}} & L_{D-1} & B_{D-1} & x^{-e_1} & x^{e_D} & x^{-e_D} \\
\begin{block}{c[cccccccc]}
  C_{D-1}^+    & I^+_D V_{D-1}(I^+_D)^{-1} &                           &   & - I^+_D V_{D-1} \text{low}^{-1} &   &         &   &  \\
  C_{D-1}^-    &                           & I^-_D V_{D-1}(I^-_D)^{-1} &   & - I^-_D V_{D-1} \text{low}^{-1} &   &         &   &  \\
  C_{D-1}^0    &                           &                           & I & R_{D-1} \text{low}^{-1}         & I & V_{D-1} &   &  \\
  x^{e_D}  &                           &                           &   &                                 &   & 1       & 1 &  \\
  x^{-e_D} &                           &                           &   &                                 &   & 1       &   & 1\\
\end{block}
\end{blockarray},
\end{align} 
where $\text{low}^{-1}: \text{span }L_{D-1} \to \text{span }C_{D-1}^0$ in the third column of $V_D$ is defined by
$$\text{low}^{-1}(x^\xi) = x^\zeta$$
whenever $x^\xi = \text{low}(R_{D-1} x^\zeta)$, and the $I$'s in $V_D$ without superscripts or subscripts refer to inclusion, e.g. the $I$ in the fifth column is the inclusion of $B_{D-1}$ to $C_D^0$. Note that some blocks in the above matrices do not necessarily have the supposed dimensions, as we have suppressed some inclusion maps, e.g. the domain of $\partial_D$ in the fourth column of $R_D$ is $B_{D-1} \subseteq C^0_D$ as opposed to the whole $C^0_D$.

We are now ready to prove that $\partial_D$ is column reduced via $R_D = \partial_D V_D$ in \cref{prop:column_reduce_boundary}. Most facts in the proposition are mundane and straight-forward, but the mutual dependence of these facts necessitates a single unified induction argument. For instance, $V_D$ as of now is ill-defined, because we have not established that $R_D$ is reduced. Therefore, all intervening steps depend on the induction hypothesis that $R_{D-1}$ is reduced, and hence they need to be established in the same induction argument.

\begin{proposition} \label{prop:column_reduce_boundary}
Suppose $D \geq 2$. 
\begin{enumerate}
\item $V_D x^{-e_1} = u_D$, $R_D x^{-e_1} = 0$, and $R_D x^{\mathbf{1}_D} = 0$.
\item $R_D = \partial_D V_D$.
\item $V_D$ restricted on $B_D$ is identity, and hence $\partial_D$ and $R_D$ coincide on $B_D$. Further, $\partial_D x^\xi = R_D x^\xi \neq 0$ for $x^\xi \in B_D$.
\item \begin{align*}
I^-_{D-1} x^{\mathbf{1}_{D-1}}&= \text{low}(R_D x^{\mathbf{1}_{D-1}})\\
I^\pm_{D-1} x^{-e_1} &= \text{low}(R_D x^{\pm e_D}) \qquad \text{(the $-e_1$ is \emph{not} a typo)}
\end{align*}
\item $L_D = I^+_{D-1}(L_{D-1}) \cup I^-_{D-1}(L_{D-1}) \cup L_{D-1} \cup \{I^-_{D-1} x^{\mathbf{1}_{D-1}}, I^+_{D-1} x^{-e_1}, I^-_{D-1} x^{-e_1}\}$.
\item $x^{-e_1}$ and $x^{\mathbf{1}_D}$ are not lowest generators of any $R_D x^\xi \in C_D$.
\item $R_D$ is reduced, and hence $V_D$ is well-defined.
\item $\partial_D$ is column reduced via $R_D = \partial_D V_D$.
\item Let \begin{equation*}
P_D = \{(x^\xi, x^\zeta) \in C_D \times C_D : x^\xi = \text{low}(R_D x^\zeta)\},
\end{equation*}
and we extend the definitions of $I^\pm_D$ to $C_D \times C_D$ by entrywise action.
Then 
\begin{align}\label{eqn:finite_PD_recursion_critical_points}
P_{D+1} &= I^+_{D} P_D \cup I^-_{D} P_D \cup P_D  \notag
\\& \qquad \cup \{(I^-_D x^{\mathbf{1}_D}, x^{\mathbf{1}_D}), (I^+_{D}x^{-e_1}, x^{e_{D+1}}), (I^-_{D}x^{-e_1}, x^{-e_{D+1}})\}.
\end{align}
\end{enumerate}
\end{proposition}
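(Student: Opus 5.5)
The plan is a single induction on $D$, with base case $D=2$ checked directly from the explicit matrices $\partial_2,R_2,V_2$. At each step $D\ge 3$ we assume all nine items for $D-1$. The order of verification matters, because the items depend on one another. First we use the inductive hypothesis that $R_{D-1}$ is reduced; this makes $\text{low}^{-1}$ a well-defined bijection $L_{D-1}\to\{x^\zeta: R_{D-1}x^\zeta\neq 0\}$, so $V_D$ in \cref{eqn:construct_factorization} is well defined. We then verify the factorization $R_D=\partial_D V_D$ (item 2) block column by block column, using the block form of $\partial_D$ in \cref{prop:morse_differential_block_structure}.

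\textbf{Item 2, column by column.} For the $C^\pm_{D-1}$ columns, conjugation by $I^\pm_{D-1}$ reduces the check to $R_{D-1}=\partial_{D-1}V_{D-1}$. For $x^{\mathbf{1}_{D-1}}$ and $B_{D-1}$, where $V_D$ is the inclusion, $\partial_D$ of such an element is $I^+ + I^- + \partial_{D-1}$. Here $\partial_{D-1}x^{\mathbf{1}_{D-1}}=0$, and $\partial_{D-1}=R_{D-1}$ on $B_{D-1}$ by item 3. For the $L_{D-1}$ column, apply $\partial_D$ to $(-I^+V_{D-1}\text{low}^{-1},\,-I^-V_{D-1}\text{low}^{-1},\,R_{D-1}\text{low}^{-1})$. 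The top two blocks give $-I^\pm R_{D-1}\text{low}^{-1}+I^\pm R_{D-1}\text{low}^{-1}=0$, and the bottom gives $\partial_{D-1}R_{D-1}\text{low}^{-1}=\partial_{D-1}^2V_{D-1}\text{low}^{-1}=0$. So this column of $R_D$ vanishes, matching \cref{eqn:construct_factorization}. Over $\mathbb{Z}/2\mathbb{Z}$ the signs are irrelevant. For $x^{-e_1}$, item 1 at $D-1$ gives $V_{D-1}x^{-e_1}=u_{D-1}$, so $V_Dx^{-e_1}=u_{D-1}+x^{e_D}+x^{-e_D}=u_D$. Then $\partial_D u_D=0$: each $x^{\pm e_i\pm e_j}$ appears twice. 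This gives item 1's first two claims, and $R_Dx^{\mathbf{1}_D}=0$ because $\partial_D$ vanishes on top-index points. For $x^{\pm e_D}$ we get $u^\pm_{D-1}$.

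\textbf{Lowest generators (items 3--6).} We compute them using \cref{lem:low_boundary}, the monotonicity of $I^\pm$, and the fact that $I^+x\prec I^-x$. In the $C^\pm_{D-1}$ columns the lows are $I^\pm$ of old lows. In a $B_{D-1}$ column the entry is $I^+x+I^-x+R_{D-1}x$. Its low is $\text{low}(\partial_D x)=x^{-e_i+\xi}$ with $i$ the first zero of $\xi$ (which may now be $D$), so we will need to check that these new lows land in $L_{D-1}$ or in the listed new elements. For $x^{\mathbf{1}_{D-1}}$ the entry is $I^+x^{\mathbf{1}_{D-1}}+I^-x^{\mathbf{1}_{D-1}}$, whose low is $I^-x^{\mathbf{1}_{D-1}}$. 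For $x^{\pm e_D}$ the entry is $u^\pm_{D-1}$. All its terms have equal value $\nu_{\{i,D\}}$ relative to $i$, and the largest is $i=1$ since $\lambda_1$ is largest; $-e_1\succ e_1$ then gives low $I^\pm x^{-e_1}$. This yields item 4, and item 5 follows by collecting lows. One point needs care: $B_D$ and $L_D$ must be described so that item 3 ($V_D$ is the identity on $B_D$) holds at the next level. This amounts to checking that $B_D$ coincides with the set of columns where $V_D$ is an inclusion, i.e.\ that every other column index lands in $L_D$ or $\{x^{-e_1},x^{\mathbf{1}_D}\}$. Item 6 follows because neither $x^{-e_1}$ nor $x^{\mathbf{1}_D}$ lies in the union in item 5: $\mathbf{1}_D$ has $\xi_D=1$, which is not in $I^+(L_{D-1})$ by induction, and $-e_1$ lies only in the $C^0$ part, where the inductive item 6 applies.

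\textbf{Main obstacle and conclusion.} The main obstacle is item 7, reducedness. We need the lows from the different blocks to be pairwise distinct. Lows from the $C^+$ and $C^-$ blocks lie in disjoint coordinate sets ($\xi_D=\pm1$). Within each block they are distinct by induction and injectivity of $I^\pm$. The three new lows are $I^-x^{\mathbf{1}_{D-1}}$ and $I^\pm x^{-e_1}$. These are not $I^\pm$ of old lows, by item 6 at level $D-1$, and they are distinct from one another. The subtle part is the $B_{D-1}$ columns. Their lows lie in $C^\pm$ (as computed from \cref{lem:low_boundary}), so we must show they avoid $I^\pm(L_{D-1})$. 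I would argue this via \cref{lem:R_of_lowest_one_must_vanish} together with the inductive structure: a $B_{D-1}$ element is by definition not a low and has $R_{D-1}\neq 0$. I would try to show the map $x\mapsto x^{-e_i+\xi}$ is injective on $B_{D-1}$ and disjoint from the images of the other blocks; if this is awkward, I would fall back on a direct count of ranks. Item 8 then follows from items 2 and 7 and the unit upper triangularity of $V_D$. Unit upper triangularity holds because each off-diagonal block maps to strictly $\prec$-smaller elements: $\text{low}^{-1}$ sends a low to a higher-valued column, and $u_D\prec x^{-e_1}$ apart from the $-e_1$ entry itself. Finally, item 9 is obtained by reading off the pairs from items 4 and 5, shifting the index $D\to D+1$.
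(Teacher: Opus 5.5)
Your architecture matches the paper's: one induction, a block-by-block check of $R_D=\partial_D V_D$, then lowest generators, reducedness, triangularity, and reading off $P_{D+1}$. Your item 2 is fine, including the direct check that $\partial_D u_D=0$ because each $x^{\pm e_i\pm e_j}$ occurs twice. So is your computation of the three new lows in item 4. (Small slip: $x^{\mathbf{1}_D}$ is a local minimum, of index $0$, not a top-index point; anyway $R_Dx^{\mathbf{1}_D}=I^+_{D-1}R_{D-1}x^{\mathbf{1}_{D-1}}=0$ by induction.)

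\textbf{The gap: where the lows of the $B_{D-1}$ columns lie.} Items 3, 5 and 7 all rest on this. You say the low of $R_Dx^\xi=I^+_{D-1}x^\xi+I^-_{D-1}x^\xi+\partial_{D-1}x^\xi$ is $x^{-e_i+\xi}$, with $i$ the first zero of $\xi$, ``which may now be $D$''. In item 7 you then assert these lows lie in $C^\pm_{D-1}$ and must be shown to avoid $I^\pm_{D-1}(L_{D-1})$. That is false, and it is exactly the point the induction hinges on.

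For $x^\xi\in B_{D-1}$, item 3 at level $D-1$ gives $\partial_{D-1}x^\xi=R_{D-1}x^\xi\neq 0$. So $\xi$ already vanishes among its first $D-1$ coordinates, and $i<D$. Then $\lambda_i\ge\lambda_D$ gives $f_{k,D}(x^{-e_i+\xi})=\nu_{\text{supp}\,\xi\cup\{i\}}\ge\nu_{\text{supp}\,\xi\cup\{D\}}=f_{k,D}(x^{\pm e_D+\xi})$. This is strict when $\lambda_i>\lambda_D$, as the paper's argument assumes. Hence the $C^0_{D-1}$ block dominates, and the low is $x^{-e_i+\xi}=\text{low}(R_{D-1}x^\xi)\in C^0_{D-1}$.

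Consequently the lows of the $B_{D-1}$ columns are exactly the copy of $L_{D-1}$ inside $C^0_{D-1}$. They are pairwise distinct because $R_{D-1}$ is reduced. They are automatically distinct from every other low (from the $C^\pm_{D-1}$, $x^{\mathbf{1}_{D-1}}$ and $x^{\pm e_D}$ columns), since those all lie in $C^\pm_{D-1}$. This is what yields the formula for $L_D$ in item 5, hence $B_D$ and item 3, and reducedness in item 7.

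Your substitute does not close this. Injectivity of $x^\xi\mapsto x^{-e_i+\xi}$ plus an unproven disjointness from the other blocks is not an argument. The fallback ``direct count of ranks'' cannot certify reducedness either, since linearly independent columns can still share a lowest generator.

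\textbf{The same comparison is missing from item 8.} Your argument for unit upper triangularity fails for the same reason. In the $L_{D-1}$ column, write $x^\xi=\text{low}(R_{D-1}x^\zeta)$. The nonzero columns of $R_{D-1}$ lie in $B_{D-1}$, where $V_{D-1}$ is the identity, so $V_Dx^\xi$ contains the terms $I^\pm_{D-1}x^\zeta=x^{\pm e_D+\zeta}$. The fact that $x^\zeta$ is higher-valued than $x^\xi$ points the wrong way. What you need is $x^{\pm e_D+\zeta}\prec x^{-e_i+\zeta}=x^\xi$. That is again the inequality $\nu_{\text{supp}\,\zeta\cup\{D\}}<\nu_{\text{supp}\,\zeta\cup\{i\}}$, for the first zero $i<D$ of $\zeta$.
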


\begin{proof}
We prove the proposition inductively. The case for $D = 2$ can be directly verified.

For the first point, $V_D x^{-e_1} = u_D$ follows from the inductive hypothesis for this point itself, $R_D x^{-e_1} = 0$ follows directly from the structure of $R_D$, and $R_D x^{\mathbf{1}_D} = 0$ again follows inductively.

The proof for the second point is a block-by-block checking using \cref{eqn:construct_factorization}. Most checking follows from the inductive hypothesis that $R_{D-1} = \partial_{D-1} V_{D-1}$. The only nontrivial blocks are in the columns for $x^{-e_1}$ and the rows for $C^0_{D-1}$. For the column of $x^{-e_1}$, we need to show
$$
\begin{bmatrix}
I^+_{D-1} (V_{D-1} x^{-e_1} + u_{D-1})\\
I^-_{D-1} (V_{D-1} x^{-e_1} + u_{D-1})\\
\partial_{D-1} V_{D-1} x^{-e_1}
\end{bmatrix} = 0.
$$
Note that the last block is $R_{D-1} x^{-e_1}$, by the inductive hypothesis. Then all three blocks vanish because of the inductive hypothesis for the first point of the proposition, as well as $\mathbb{Z}/2\mathbb{Z}$ arithmetic.

We will establish the third point after establishing the identity of $L_D$ in terms of $L_{D-1}$ and other subsets (the fifth point of the proposition).


For the fourth point, $R_D x^{\mathbf{1}_{D-1}} = I_{D-1}^+ x^{\mathbf{1}_{D-1}} + I_{D-1}^- x^{\mathbf{1}_{D-1}}$. By our convention on the ordering, $\text{low}(R_D x^{\mathbf{1}_{D-1}}) = I_{D-1}^- x^{\mathbf{1}_{D-1}}$.
$$R_D x^{\pm e_D} = I_{D-1}^\pm u_{D-1}
= I_{D-1}^\pm (\sum_{1 \leq i \leq D-1} x^{e_i} + x^{-e_i})
= \sum_{1 \leq i \leq D-1} x^{e_i \pm e_D} + x^{-e_i \pm e_D}.$$
By our convention, $\pm e_i \pm e_D \prec -e_1 \pm e_D$, and hence $\text{low}(R_D x^{\pm e_D}) = x^{-e_1 \pm e_D} = I^\pm_{D-1} x^{-e_1}$.

For the fifth point, the recursive definition of $R_D$ and the previous point together suggest that $L_D$ contains
$$I^+_{D-1}(L_{D-1}) \cup I^-_{D-1}(L_{D-1}) \cup \{I_{D-1}^- x^{\mathbf{1}_{D-1}}, I^+_{D-1} x^{-e_1}, I^-_{D-1} x^{-e_1}\}.$$
It remains to consider $C^0_{D-1}$. The inductive hypothesis of the third point implies the column of $R_D$ for $C^0_{D-1}$ is the same as
\begin{equation}\label{eqn:RD_BD-1}
\begin{bmatrix}
I^+_{D-1}\\
I^-_{D-1}\\
R_{D-1}
\end{bmatrix}.
\end{equation}
(Even though we have not established the third point for the case $D$, we are allowed to use the third point for the case $D-1$.)
The same point also implies the lowest block $R_{D-1}: B_{D-1} \to C^0_{D-1}$ has no zero column. This block also contains all the nonzero column of $R_{D-1}$, because the other columns vanish (by \cref{lem:R_of_lowest_one_must_vanish} for $L_{D-1}$ and by the first point of the proposition for $x^{-e_1}, x^{\mathbf{1}_D}$), and hence all lowest generators from the column $C^0_{D-1}$ of $R_D$ come from $B_{D-1}$. The fifth point then follows if these lowest generators dominate those in the first two blocks in \cref{eqn:RD_BD-1}. To show this, the third point of the proposition implies the lowest block in \cref{eqn:RD_BD-1} coincides with $\partial_{D-1}$. Therefore, it suffices to show the lowest generators of $\partial_{D-1} x^\xi$ are in $C^0_{D-1}$ for $x^\xi \in B_{D-1}$. Let $\xi_i$ be the first zero entry of $\xi$. Since $x^\xi \in B_{D-1} \subseteq C^0_{D-1} - \{\mathbf{1}_{D-1}\}$, $i < D$. Then by \cref{lem:low_boundary}, $\text{low}(\partial_{D-1} x^\xi) = x^{-e_i + \xi}$. Since $f_{k,D}(x^{-e_i + \xi}) > f_{k,D}(x^{\pm e_D + \xi})$, the lowest generator indeed lies in $C^0_{D-1}$.

We now establish the third point. The fifth point implies
$$B_D = I^+_{D-1}(B_{D-1}) \cup I^-_{D-1}(B_{D-1}) \cup B_{D-1} \cup \{x^{\mathbf{1}_{D-1}}, x^{e_D}, x^{-e_D}\}.$$
The third point then follows from direct inspection.

We will establish the sixth point after the seventh. For the seventh point, to see $R_D$ is reduced, the seventh point implies the lowest generators of $x^{\mathbf{1}_{D-1}}, x^{e_D}, x^{-e_D}$ are elements of $C^\pm_{D-1}$ that are not lowest generators of other elements in $C^\pm_D$. The induction hypothesis implies the rest of the columns do not have common lowest generators. The seventh point then follows.

For the sixth point, $x^{-e_1}$ cannot be a lowest generator because it is the last element of $C_D$ and the fact that $\partial_D$ by construction is upper triangular. If $x^{\mathbf{1}_D} = \text{low}(R_D x^\zeta)$ is a lowest generator, then by the structure of $R_D$ and our knowledge of the lowest generators in the arguments above, $x^\zeta \in C^+_{D-1}$, and hence $x^{\mathbf{1}_{D-1}} = \text{low}(R_{D-1} (I^+_{D-1})^{-1}x^\zeta)$, contradictory to the inductive hypothesis.

For the eighth point, it remains to show $V_D$ is unit upper triangular. Since $V_D$ is identity on $B_D$, it suffices to consider the column for $L_{D-1}$. It can be readily shown that $\text{low}(R_{D-1} \text{low}^{-1} x^\xi) = x^\xi$, and hence it suffices to show that whenever $\xi = \text{low}(R_{D-1} x^\zeta)$, generators of $I^\pm_{D-1} x^\zeta$ all precede $x^\xi$ itself. Since $\partial_{D-1}$ and $R_{D-1}$ coincide on $B_{D-1}$, we have $x^\xi = \text{low}(\partial_{D-1} \zeta) = x^{-e_i + \zeta}$, where $\zeta_i$ is the first zero entry of $\zeta$. On the other hand, $I^\pm_{D-1} x^\zeta = x^{\pm e_D + \zeta},$ which has strictly smaller function value than $x^{-e_i + \zeta}$. Upper triangular-ness then follows.

The last point merely summarizes the observations about lowest generators in the argument above.
\end{proof}

\section{Proof of \cref{thm:persistence_diagram_sublevel_even}}
\label{sec:proof_sublevel_even}
The case for $D = 1$ is trivial. The case for $D = 2$ can be directly verified. We only consider the case for $D \geq 3$.

For the first claim, by \cref{thm:persistence_diagram_from_reduced_matrix}, it suffices to find $x^\xi$ such that $R_D x^\xi = 0$, and $x^\xi$ is not the lowest generator of any $Rx^\zeta$. The first and the sixth points of \cref{prop:column_reduce_boundary} then implies $x^{-e_1}$ and $x^{\mathbf{1}_D}$ are such critical points. The third point of \cref{prop:column_reduce_boundary} implies they are the only such generators, because $C_D = \{x^{-e_1}, x^{\mathbf{1}_D}\} \cup L_{D} \cup B_{D}$, where $L_{D}$ by construction consists of lowest generators, and $R_D$ does not vanish on $B_D$.

We prove the second claim by induction. The case for $D = 2$ follows by inspecting $R_2$ directly. By \cref{thm:persistence_diagram_from_reduced_matrix}, \cref{eqn:finite_PD_recursion_critical_points} in the last point of \cref{prop:column_reduce_boundary} defines a recursion of the finite parts of the union of persistence diagrams of $f_{k,D}$ across dimensions. 
Let $\mathcal{D}^\text{finite}_d$ be the union of finite parts of the union of persistence diagrams of $f_{k,d}$ on $S^{d-1}$ across homological dimensions. (We switch to denote by ambient dimension by the lowercase $d$ to help distinguish it from the notation for persistence diagram.) Note that the coordinates of points in $\mathcal{D}^\text{finite}_d$ is of the form $\nu_A$ for some $A \subseteq \{1, ..., d\}$, because all critical values take this form (Cf. \cref{lem:critical_values_and_ordering}).
Then
\begin{align}\label{eqn:finite_PD_recursion_critical_values}
\mathcal{D}^\text{finite}_d &= 2 \times \Phi_d (\mathcal{D}^\text{finite}_{d-1}) \cup \mathcal{D}^\text{finite}_{d-1} \notag \\& \qquad \cup \{(\nu_{\{1, ..., d\}}, \nu_{\{1, ..., d-1\}})\} \cup 2 \times \{(\nu_{\{1, d\}}, \nu_{\{d\}})\},
\end{align}
where
\begin{itemize}
\item $2 \times E$ denotes the set with the same elements but the multiplicity of each element in $2 \times E$ is double the original multiplicity, e.g. $2 \times \{a, b, b\} = \{a, a, b, b, b, b\}$, and
\item $\Phi_d(E) = \{\phi_d(\nu): \nu \in E\}$, where $\phi_d(\nu_A, \nu_B) = (\nu_{A \cup \{d\}}, \nu_{B \cup \{d\}})$.
\end{itemize}

By the induction hypothesis, $\mathcal{D}^\text{finite}_{d-1}$ consists of points of the form $(\nu_E, \nu_{E - \{i\}})$ with multiplicity $2^{|E| - i}$, where $E$ ranges over all subsets of $\{1, ..., d-1\}$ containing 1 and has at least 2 elements (because we have taken union across homological dimensions).
It suffices to show the recursion implies $\mathcal{D}^\text{finite}_d$ has the same form, because the cardinality of $E$ in the coordinates of points in $\mathcal{D}^\text{finite}_d$ automatically keeps track of the homological dimension. Indeed, \cref{eqn:finite_PD_recursion_critical_values} does imply that $\mathcal{D}^\text{finite}_d$ has the desired form, as $2 \times \Phi_d (\mathcal{D}^\text{finite}_{d-1})$ contributes to the $(\nu_E, \nu_{E - \{i\}})$'s such that $d \in E$, $|E| \geq 3$ and $i \neq d$; $\mathcal{D}^\text{finite}_{d-1}$ contributes to the $(\nu_E, \nu_{E - \{i\}})$'s that $d \notin E$; $\{(\nu_{\{1, ..., d\}}, \nu_{\{1, ..., d-1\}})\}$ contributes to the case where $d \in E$, $|E| \geq 3$ and $d = i$ (since it is required that $i \leq |E|$, this case means $E$ must be $\{1, ..., d\}$); and $2 \times \{(\nu_{\{1, d\}}, \nu_{\{d\}})\}$ contribute to the case where $|E| = 2$, and $d \in E$. The second claim then follows.

\section{Proof of \cref{cor:persistence_diagram_superlevel_even}}
\label{sec:proof_cor_persistence_diagram_superlevel_even}

First, we state the version of Alexander duality we need:
\begin{theorem}[Alexander Duality]\label{thm:alexander_duality}
Let $K \subseteq L \subseteq S^{D-1}$. Suppose $K$ and $L$ are compact, locally contractible, nonempty and proper (proper means not the whole $S^{D-1}$). Then there exist horizontal isomorphisms such that the following diagram commutes:
$$
\begin{tikzcd}
\tilde H_q (S^{D-1} - L; \mathbb{Z}/2\mathbb{Z}) \arrow[d] \arrow[r] & \tilde H^{D-2-q}(L; \mathbb{Z}/2\mathbb{Z}) \arrow[d] \\
\tilde H_q (S^{D-1} - K; \mathbb{Z}/2\mathbb{Z}) \arrow[r]           & \tilde H^{D-2-q}(K; \mathbb{Z}/2\mathbb{Z}),          
\end{tikzcd}
$$
where vertical maps are induced by inclusions and $\tilde H$ denotes reduced homology (defined in \cref{def:reduced_homology}). 
\end{theorem}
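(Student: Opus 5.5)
The plan is to obtain the commutative square by composing three natural isomorphisms, following the proof of Alexander duality in Hatcher (Theorem 3.44 and Proposition 3.46), and checking that each one commutes with the maps induced by $K \subseteq L$. Write $n = D-1$, so the target degree is $n-q-1 = D-2-q$.

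\begin{enumerate}
\item \emph{Boundary isomorphism.} Consider the long exact sequence in reduced homology of the pair $(S^{n}, S^{n}-L)$. Since $L$ is nonempty, $S^n - L$ is a proper open subset of $S^n$.
\begin{itemize}
\item For $0 \le q \le n-2$ the groups $\tilde H_{q+1}(S^n)$ and $\tilde H_q(S^n)$ vanish, so the connecting map $\partial: H_{q+1}(S^n, S^n - L) \to \tilde H_q(S^n - L)$ is an isomorphism.
\item For $q = n-1$ (and the degenerate range) I would use that $L$ is nonempty: $\tilde H_n(S^n-L)=0$ because $S^n-L$ is a noncompact $n$-manifold. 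I would also use that $L$ is proper, and that inclusion into $S^n$ kills $\tilde H_{n-1}$, to get the same conclusion.
\end{itemize}
The pair $(S^n, S^n - L)$ maps into $(S^n, S^n - K)$ by the identity map of $S^n$. The long exact sequence is natural for maps of pairs, so this isomorphism is compatible with the vertical maps.

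\item \emph{Duality for the compact subset.} Apply duality on the closed $n$-manifold $S^n$. With $\mathbb{Z}/2\mathbb{Z}$ coefficients every manifold is orientable, so no orientation issues arise. Proposition 3.46 of Hatcher gives an isomorphism $D_L : \check H^{n-q-1}(L) \to H_{q+1}(S^n, S^n-L)$.
\begin{itemize}
\item It is defined as the direct limit, over open neighbourhoods $U \supseteq L$, of cap product with the fundamental class $\mu_U \in H_n(S^n, S^n - L)$ restricted from $[S^n]$.
\item Naturality in $K \subseteq L$ follows because every neighbourhood of $L$ is a neighbourhood of $K$.
\item Under the pair map, $\mu$ for $L$ restricts to $\mu$ for $K$, and cap product is natural: $i_*(i^*\alpha \frown \mu) = \alpha \frown i_*\mu$.
\item So the square between $\check H^{n-q-1}(L) \to \check H^{n-q-1}(K)$ (restriction) and $H_{q+1}(S^n,S^n-L) \to H_{q+1}(S^n, S^n-K)$ commutes.
\end{itemize}

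\item \emph{Čech versus singular.} Since $K$ and $L$ are compact and locally contractible in $S^n$, they are ENRs. Hence the natural map $\check H^* \to H^*$ is an isomorphism for them (Hatcher, Proposition 3.46 and Corollary A.10). This comparison map is natural under restriction, so it commutes with $K \hookrightarrow L$. The same holds in reduced cohomology.
\end{enumerate}

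To assemble the result, I would also pass to reduced groups. In the boundary step, the reduced and unreduced versions differ only in degree $0$, and they are matched by using reduced cohomology on the right. The horizontal isomorphism is then $\partial^{-1}$, followed by $D^{-1}$, followed by the Čech-to-singular comparison. Each of the three squares commutes, so the composite square does too.

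The main obstacle is the careful bookkeeping of directions and degrees in the naturality argument, particularly that $D_K$ and $D_L$ are compatible under the pair map with fundamental classes restricting correctly. I would handle this by writing $D$ explicitly as a colimit of cap products and citing naturality of cap product, as above. The secondary nuisance is the edge degrees $q=0$ and $q=n-1$ in the reduced theory. This is precisely where the nonemptiness and properness hypotheses enter, and I would check these two degrees separately from the long exact sequence.
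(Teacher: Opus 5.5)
Your three-step route is the standard proof behind the references the paper relies on: Hatcher's Theorem 3.44 via Proposition 3.46 and the \v{C}ech-to-singular comparison, with naturality as in Munkres, Theorem 72.4. Your naturality checks for the connecting map, the cap-product duality and the comparison map are fine. The gap is in step 1 at $q = n-1 = D-2$. The facts you invoke are $H_n(S^n - L) = 0$ (every component of $S^n - L$ is noncompact) and $\tilde H_{n-1}(S^n) = 0$. They give the short exact sequence
\[
0 \to \tilde H_n(S^n) \to H_n(S^n, S^n - L) \xrightarrow{\ \partial\ } \tilde H_{n-1}(S^n - L) \to 0 .
\]
So $\partial$ is onto with a one-dimensional kernel spanned by the image $\mu_L$ of $[S^n]$; it is not an isomorphism. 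For example, if $L$ is a point, then $H_n(S^n, S^n - L) \cong \mathbb{Z}/2\mathbb{Z}$ while $\tilde H_{n-1}(\mathbb{R}^n) = 0$. Hence your composite ``$\partial^{-1}$, then $D_L^{-1}$, then comparison'' is undefined in exactly this degree. Your closing remark does not repair this, because it conflates two different degrees. On the homology side, the reduced long exact sequence already handles $q = 0$, since $\tilde H_0(S^n) = 0$. On the cohomology side, reduced and unreduced groups differ in cohomological degree $0$, i.e.\ at $q = n-1$, which is precisely where $\partial$ fails. The two degrees coincide only when $D = 2$.

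The missing idea is to match $\ker\partial$ with the constants on the cohomology side. Write $\mathbb{F} = \mathbb{Z}/2\mathbb{Z}$. The unit $1_L \in \check H^0(L)$ satisfies $D_L(1_L) = 1_L \frown \mu_L = \mu_L$. So $D_L$ carries $\mathbb{F}\cdot 1_L$ onto $\ker\partial$ and induces
\[
\tilde H^0(L) = H^0(L)/\mathbb{F}\cdot 1_L \cong \check H^0(L)/\mathbb{F}\cdot 1_L \cong H_n(S^n, S^n - L)/\mathbb{F}\mu_L \cong \tilde H_{n-1}(S^n - L).
\]
This is natural in $K \subseteq L$, because restriction sends $1_L$ to $1_K$ and the pair map sends $\mu_L$ to $\mu_K$. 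This is not a cosmetic edge case for the paper. The dimension-$(D-2)$ row of the superlevel diagram in \cref{cor:persistence_diagram_superlevel_even} comes exactly from $\tilde H_{D-2}$ of a superlevel set being dual to $\tilde H^0$ of a sublevel set. That is the degree your argument currently gets wrong.
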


\begin{remark}
A space $X$ is said to be ``locally contractible" if for each $x \in X$ and each neighborhood $U$ of $x$ there exists a neighborhood $V \subseteq U$ of $x$ such that the inclusion map $V \to U$ is homotopic to the constant map $y \mapsto x$ for $y \in V$ (Cf. Theorem A.7 and p.3 -- 4 of \citep{hatcher02_algtopo}). Naturality of Alexander duality was established in Theorem 72.4 of \citep{munkres84algtopo}.
\end{remark}

\begin{theorem}[Proposition 2.3 of Morozov's duality theorem]\label{thm:persistent_duality}
The persistent homology and the persistent cohomology of a filtered chain complex (Cf. \cref{def:filtered_chain_complex}) have the same persistence diagram at all dimensions.
\end{theorem}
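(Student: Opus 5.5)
The plan is to reduce the statement to pointwise linear-algebraic duality of persistence modules. Let $(C^t_\bullet)_t$ be the filtered chain complex over the field $\mathbb{F}$ (here $\mathbb{Z}/2\mathbb{Z}$), with inclusions $\iota_{st}: C^s_\bullet \to C^t_\bullet$ for $s \leq t$. Persistent homology is the module $V_t = H_q(C^t_\bullet)$ with maps $L_{st} = (\iota_{st})_*$. Persistent cohomology is the reversed module $W_t = H^q(\operatorname{Hom}(C^t_\bullet, \mathbb{F}))$ with maps $L_{ts} = \iota_{st}^*: W_t \to W_s$.

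\textbf{Step 1 (naturality of the universal coefficient theorem).} Over a field, the universal coefficient theorem gives an isomorphism $\kappa_t: H^q(\operatorname{Hom}(C^t_\bullet,\mathbb{F})) \to \operatorname{Hom}(H_q(C^t_\bullet),\mathbb{F})$, since the Ext term vanishes. It is given by evaluating a cocycle on a cycle. This pairing is natural in chain maps, so for $s \leq t$
$$\kappa_s \circ \iota_{st}^* = (L_{st})^\vee \circ \kappa_t .$$
Consequently the persistent cohomology module is isomorphic, as a reversed persistence module, to the pointwise dual module $(V_t^\vee, L_{st}^\vee)$.

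\textbf{Step 2 (dualizing an interval decomposition).} Take an interval decomposition $V \cong \bigoplus_i V(J_i)$; for the Morse filtrations of interest it exists by \cref{prop:persistent_homology_of_morse_function_is_interval_decomposable}. Each $V_t$ is finite-dimensional, so only finitely many $J_i$ contain any given $t$. Hence, pointwise,
$$\Bigl(\bigoplus_i V(J_i)\Bigr)_t^\vee = \bigoplus_{i: t\in J_i}\mathbb{F}^\vee,$$
and the product of duals coincides with the direct sum at each $t$, compatibly with the structure maps. Dualizing a single interval module $V(J)$ gives back $\mathbb{F}$ exactly on $J$. The dual of the identity is the identity and the dual of $0$ is $0$, so the dual is the $J$-interval module in the reversed direction. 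Therefore $W \cong \bigoplus_i V(J_i)^{\mathrm{rev}}$ has the same intervals, and by the Krull--Remak--Schmidt--Azumaya uniqueness the same multiset $\{(\inf J_i,\sup J_i)\}$. This is the same persistence diagram, under the paper's convention for reversed modules, which only relabels which coordinate is called birth.

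\textbf{Main obstacle.} The delicate point is Step 1: one must check that the UCT isomorphism commutes with the maps induced by the filtration inclusions. It does, because the evaluation pairing $\langle \iota^*\varphi, z\rangle = \langle \varphi, \iota z\rangle$ holds at chain level. A second point is that the interval endpoints (open or closed) must be preserved, not just the $(\inf,\sup)$ pairs. This holds because dualization acts pointwise in $t$ and so preserves the support set of each interval exactly. I would first state the argument for a finite filtered complex, where everything is finite-dimensional. I would then note that for a Morse filtration on a compact manifold the modules are pointwise finite-dimensional, so the same argument applies verbatim.
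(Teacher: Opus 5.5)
Your proposal is correct. The paper gives no proof of its own and simply imports the statement from Morozov and coauthors, and your route is the standard dualization argument for that result: the natural universal coefficient isomorphism identifies persistent cohomology with the pointwise dual of persistent homology, and dualizing sends each interval summand to the interval module on the same interval, read in the reversed direction.
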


\begin{lemma}\label{lem:strict_sublevel_homologous_to_sublevel}
Let $f$ be a Morse function and $c$ be a regular value (i.e. not a critical value). Then the inclusion map $f^{-1}(c, \infty) \to f^{-1}[c, \infty)$ induces an isomorphism on homology.
\end{lemma}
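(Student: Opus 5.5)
Since $c$ is a regular value and $M$ is compact, I plan to prove the statement by a deformation retraction of $f^{-1}[c, \infty)$ onto a slightly smaller superlevel set, using the gradient flow. The set of critical values of a Morse function on a compact manifold is finite (finitely many critical points, by Corollary 2.3 of \citep{milnor63_morseTheory}), so there is $\varepsilon > 0$ such that $[c - \varepsilon, c + \varepsilon]$ contains no critical value.

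The key steps run as follows. First, apply the standard Morse lemma on regular intervals (Theorem 3.1 of \citep{milnor63_morseTheory}) to $-f$: since $-f$ has no critical values in $[-c-\varepsilon, -c+\varepsilon]$, the region $f^{-1}[c-\varepsilon, c+\varepsilon]$ is diffeomorphic to $f^{-1}(c) \times [c - \varepsilon, c + \varepsilon]$ via the normalized gradient flow $\nabla f / |\nabla f|^2$, with $f$ corresponding to the second coordinate. Second, in this product collar, define a homotopy that pushes the layer $f^{-1}[c, c+\varepsilon)$ up onto $f^{-1}(c+\varepsilon)$, say by the straight-line map $(y, s) \mapsto (y, s + \tau(c + \varepsilon - s))$ for $\tau \in [0,1]$, extended by the identity on $f^{-1}[c+\varepsilon, \infty)$. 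This shows that $f^{-1}[c+\varepsilon, \infty)$ is a deformation retract of $f^{-1}[c, \infty)$. Third, run the same construction on the open set: the homotopy restricts to $f^{-1}(c, \infty)$, and since $s > c$ is preserved, it shows $f^{-1}[c+\varepsilon, \infty)$ is also a deformation retract of $f^{-1}(c, \infty)$. Finally, the inclusion $f^{-1}[c+\varepsilon,\infty) \to f^{-1}(c,\infty) \to f^{-1}[c,\infty)$ is a composition whose total and first map are both homotopy equivalences, so the second is too. Therefore it induces isomorphisms on homology.

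The main obstacle I expect is the continuity of the homotopy across the boundary level $f^{-1}(c+\varepsilon)$ and its compatibility with the open subset. Both are immediate once the collar parametrization is fixed, because the homotopy is the identity at $s = c+\varepsilon$ and preserves the half-open strip. An alternative I would fall back on is to note that $f^{-1}[c,\infty)$ is a compact manifold with boundary $f^{-1}(c)$, and that the interior of a manifold with boundary includes as a homotopy equivalence by the collar neighborhood theorem. This gives the same conclusion directly.
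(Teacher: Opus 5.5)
Your proposal is correct and follows essentially the same route as the paper. Both deformation-retract $f^{-1}[c,\infty)$ and $f^{-1}(c,\infty)$ onto a slightly higher superlevel set $f^{-1}[c+\varepsilon,\infty)$ via the gradient flow of Milnor's regular-interval theorem applied to $-f$, and then conclude by two-out-of-three for the composed inclusions; your explicit collar homotopy just spells out the ``same argument'' step that the paper leaves implicit for the open set.
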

\begin{proof}[Sketch of Proof]
Pick a $c'$ such that $c' > c$ and there is no critical value in $[c, c']$.
By Lemma 3.1 of \cite{milnor63_morseTheory} (applied to $-f$) $f^{-1}[c, \infty)$ deformation retracts to $f^{-1}[c', \infty)$ via the gradient ascent flow. By exactly the same argument in the proof, $f^{-1}(c, \infty)$ also deformation retracts to $f^{-1}[c', \infty)$. Thus the inclusions of $f^{-1}[c', \infty)$ into both of them induce isomorphisms $\varphi_{[c, \infty)}, \varphi_{(c, \infty)}$. It follows that the homology map induced by inclusion map $f^{-1}(c, \infty) \to f^{-1}[c, \infty)$, which is $\varphi_{[c, \infty)}^{-1}\varphi_{(c, \infty)}$, is an isomorphism.
\end{proof}

\begin{lemma}\label{lem:persistence_diagram_reduced_homology}
Let $f$ be a Morse function. Let $\mathcal{D}_q$ and $\tilde{\mathcal{D}}_q$ be the persistence diagrams of the persistent homology and persistent \emph{reduced} homology of the sublevel filtration of $f$ at dimension $q$.
Then $\mathcal{D}_q = \tilde{\mathcal{D}}_q$ for $q > 0$, and $\mathcal{D}_0 = \tilde{\mathcal{D}}_0 \cup \{(\min f, \infty)\}$. 
\end{lemma}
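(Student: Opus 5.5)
The plan is to compare the two persistence modules $t \mapsto H_q(M^t;\mathbb{Z}/2\mathbb{Z})$ and $t \mapsto \tilde H_q(M^t;\mathbb{Z}/2\mathbb{Z})$ directly. Both are taken with $M^t = f^{-1}(-\infty,t]$ on a compact manifold, and $\tilde H_q(\emptyset)$ is set to $0$ for $t < \min f$. We exhibit a splitting of persistence modules. Once the modules are split, the Krull--Remak--Schmidt--Azumaya uniqueness (the remark after \cref{def:persistence_diagram}) lets us read off the diagrams.

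For $q>0$ the argument is immediate. For every nonempty space $X$ we have $\tilde H_q(X)=H_q(X)$ canonically, and this identification is natural with respect to inclusions. For $t<\min f$ both modules vanish. Hence the two persistence modules are isomorphic, and $\mathcal D_q=\tilde{\mathcal D}_q$.

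For $q=0$ we use the augmentation $\varepsilon_t: H_0(M^t)\to\mathbb{Z}/2\mathbb{Z}$ for $t\ge \min f$, and set $\varepsilon_t=0$ for $t<\min f$. This gives a short exact sequence of persistence modules
\[
0\to \tilde H_0(M^t)\to H_0(M^t)\xrightarrow{\varepsilon_t} V([\min f,\infty))_t \to 0 .
\]
Naturality holds because inclusions commute with augmentation, and surjectivity holds for $t\ge\min f$ since $M^t\neq\emptyset$. To split the sequence, fix a global minimizer $p$, so $p\in M^t$ for all $t\ge \min f$. Define $s_t(1)=[p]\in H_0(M^t)$. Since $[p]\mapsto[p]$ under inclusion, $s$ is a morphism of persistence modules with $\varepsilon\circ s=\mathrm{id}$. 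This is essentially the remark in the proof of \cref{thm:morse_homology_naturally_isomorphic_to_singular_homology} that reduced homology is homology relative to a global minimum.

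The splitting gives $H_0(M^t)\cong \tilde H_0(M^t)\oplus V([\min f,\infty))$ as persistence modules. Both summands are interval decomposable: the first by \cref{prop:persistent_homology_of_morse_function_is_interval_decomposable} together with the fact that a direct summand of a pointwise finite-dimensional module is again pointwise finite-dimensional. The decompositions therefore concatenate. By uniqueness of interval decompositions, $\mathcal D_0=\tilde{\mathcal D}_0\cup\{(\min f,\infty)\}$ as multisets.

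The only delicate point is bookkeeping at levels $t<\min f$ and checking that the splitting is natural in $t$. Choosing a fixed global minimizer $p$ handles both.
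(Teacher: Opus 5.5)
Your proof is correct and has the same skeleton as the paper's. The modules coincide for $q>0$, and for $q=0$ one splits a natural short exact sequence of persistence modules using a fixed global minimizer, then reads off the diagrams by uniqueness of interval decompositions.

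The difference is the splitting map, and it matters. The paper takes the sequence $0\to H_0(\{x_0\})\to H_0(X_t)\to H_0(X_t,\{x_0\})\to 0$ of the pair. It splits this with the retraction $p_t$ that sends the component of $x_0$ to the generator and every other component to $0$. This $p_t$ does not commute with the structure maps. Take a component $C$ of $X_s$ that does not contain $x_0$ but has merged into the component of $x_0$ by level $t$. Then $p_s[C]=0$ while $p_t(\iota_{st}[C])=1$. This happens in the paper's own setting, since the many local minima of $f_{k,D}$ for even $k$ merge over time.

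The retraction that is natural is the augmentation, which is exactly your $\varepsilon_t$. Your section $1\mapsto[p]$ is the paper's inclusion $H_0(\{x_0\})\to H_0(X_t)$, used as a section of $\varepsilon_t$ rather than as the map to be retracted. So your route supplies the naturality the paper's argument needs. You also justify that the summand $\tilde H_0$ is pointwise finite-dimensional, hence interval decomposable, and you handle the levels $t<\min f$; the paper leaves both implicit.
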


We defer the proof of this lemma to \cref{sec:homological_algebra}.

\begin{lemma}
Let $f$ be a Morse function on a closed manifold. Suppose $t$ is a regular value of $f$ (i.e. not critical value) and $\min f < t < \max f$. Then $f^{-1}((-\infty, t])$ is compact, locally contractible, nonempty and proper.
\end{lemma}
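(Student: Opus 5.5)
We check the four properties one at a time. The only one with real content is local contractibility. Write $K = f^{-1}((-\infty, t])$ on the closed (hence compact, Hausdorff) manifold $M$, which here is $S^{D-1}$.

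\emph{Compactness, nonemptiness, properness.} Since $f$ is continuous, $K$ is a closed subset of the compact manifold $M$, so it is compact. Because $t > \min f$, any global minimizer of $f$ lies in $K$, so $K$ is nonempty. Because $t < \max f$, any global maximizer lies outside $K$, so $K \neq M$.

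\emph{Local contractibility.} The plan is to show that $K$ is a smooth manifold with boundary and then quote the standard fact that manifolds with boundary are locally contractible. Since $t$ is a regular value, $df_p \neq 0$ at every $p \in f^{-1}(t)$. By the regular value theorem, or equivalently the implicit function theorem, each such $p$ has a chart of $M$ in which $f - t$ is the first coordinate $y_1$. In that chart $K$ is locally the closed half-space $\{y_1 \leq 0\}$. Points with $f(p) < t$ lie in the open set $f^{-1}((-\infty,t))$, which is an open submanifold of $M$.

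So every point of $K$ has a neighborhood in $K$ homeomorphic to either an open ball or a relatively open half-ball $\{y \in B : y_1 \leq 0\}$. Both are convex subsets of $\mathbb{R}^{D-1}$. Given any neighborhood $U$ of $x$, we can shrink to a smaller convex chart neighborhood $V \subseteq U$. The straight-line homotopy to $x$ stays inside $V$, and therefore inside $U$. This gives exactly the null-homotopy of $V \to U$ to the constant map $y \mapsto x$ required in the remark after \cref{thm:alexander_duality}.

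\emph{Expected obstacle.} The only delicate point is producing the boundary charts, and these come directly from the regular value hypothesis. Without it, a critical point on the level set could spoil the manifold-with-boundary structure. Even then $K$ would remain locally contractible, since it is semialgebraic here, but we do not need that. Everything else is routine point-set topology.
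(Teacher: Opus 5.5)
Your proof is correct and follows the same route as the paper. Compactness comes from being a closed subset of a compact manifold, nonemptiness and properness come from $\min f < t < \max f$, and local contractibility comes from the regular-value hypothesis giving every point of the sublevel set arbitrarily small ball or half-ball neighborhoods; you make this explicit with regular-value charts and straight-line homotopies, where the paper states it in one line.
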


\begin{proof}
Nonemptiness and properness are clear. Compactness follows from the compactness of closed sets in a closed (and hence compact) manifold. Under our assumption, points in $f^{-1}((-\infty, t])$ have local bases consisting of either open balls or half-balls, both of which are contractible, and hence their identity maps are homotopic to constant maps.
\end{proof}

\begin{proof}[Proof of \cref{cor:persistence_diagram_superlevel_even}]
The case for $D = 1$ is trivial. Suppose $D \geq 2$. Let $f = f_{k,D}$, and $x_0 = e_1$, which is a global maximum of $f$. Throughout this proof, the coefficients of all homology groups are in $\mathbb{F} = \mathbb{Z}/2\mathbb{Z}$, and the base point for reduced (co-) homology is $x_0$.

\begin{table}
\centering
\caption{Homology $H_q(\{x: f(x) \geq t\}; \mathbb{F})$ of the superlevel filtration of $f = f_{k,D}$, where $\mathbb{F} = \mathbb{Z}/2\mathbb{Z}$, $D \geq 2$, and $k$ is even. AD means the homology can be computed by applying Alexander duality to \cref{thm:persistence_diagram_sublevel_even}.}
\label{tab:homology_superlevel_even}
\begin{tabular}{|c|c c c c c|}
\hline
 $q$ & $t < \min f$  & $t = \min f$ & $\min f < t < \max f$ & $t = \max f$ & $t > \max f$ 
\\
\hline
$\{0\}$ & $\mathbb{F}$ & $\mathbb{F}$ & AD & $\mathbb{F}^2$ & 0
\\ 
$[1, D-2]$ & 0 & 0 & AD & 0 & 0
\\
$\{D-1\}$ & $\mathbb{F}$ & $\mathbb{F}$ & 0 & 0 & 0
\\
$[D, \infty)$ & 0 & 0 & 0 & 0 & 0
\\
\hline
\end{tabular}
\end{table}

The homology groups of the superlevel sets of $f$ at different dimensions are summarized in \cref{tab:homology_superlevel_even}. Assertions for all columns except the middle one (for $\min f < t < \max f$) follows from the fact that the superlevel sets on those columns are either empty ($t \geq \max f$), $\{e_1, -e_1\}$ ($t = \max f$), or $S^{D-1}$ ($t \leq \min f$). For dimension $D - 1$ or higher, viewing $S^{D-1} - \{x_0\}$ as $\mathbb{R}^{D-1}$, Corollary 3.46 of \citep{hatcher02_algtopo} (and the universal coefficient theorem, Theorem 3A.3 of \citep{hatcher02_algtopo}) implies $H_q(f^{-1}[t, \infty)) = 0$ for $\min f < t < \max f$. The corollary for dimensions $q \geq D-1$ then follows.

It remains to use Alexander duality to deduce the homology at dimensions $0 \leq q \leq D-2$ for the range $\min f < t < \max f$, where the persistent \emph{reduced} homology is defined.

Alexander duality (\cref{thm:alexander_duality}), along with \cref{lem:strict_sublevel_homologous_to_sublevel}, implies that persistence module ${\tilde M}_q^\text{superlevel homology}$ of the persistent reduced homology of the superlevel filtration of $f$ at dimension $q$ is isomorphic to the persistence module ${\tilde M}_{D-2-q}^\text{sublevel cohomology}$ of the persistent reduced cohomology of the sublevel filtration of $f$ at dimension $D-2-q$, whose persistence diagram $\tilde{\mathcal{D}}_{D-2-q}^\text{sublevel cohomology}$ in turn is equal to the persistence diagram $\tilde{\mathcal{D}}_{D-2-q}^\text{sublevel homology}$
 of the persistent reduced homology of the sublevel filtration of $f$ at dimension $D-2-q$ by \cref{thm:persistent_duality}.
 
It remains to compare the reduced homology and homology of the superlevel filtration. For $1 \leq q \leq D-2$, since the homology groups in the first and the last columns of \cref{tab:homology_superlevel_even} for the second row vanish, by \cref{lem:persistence_diagram_reduced_homology}, the persistence diagram of the reduced homology and homology coincide. The corollary for these dimensions then follows.

For dimension $q = 0$, the Alexander duality argument above shows that the persistence module\\${\tilde M}_0^{\text{superlevel homology on }(\min f, \max f)}$ of the persistent reduced homology of the superlevel filtration of $f$ at dimension $0$ on the range $(\min f, \max f)$ is the direct sum of interval modules whose intervals take the form $(\nu_{\{1, i\}}, \nu_{\{i\}})$ for $i \geq 2$ or $(\nu_{\{1, 2\}}, \nu_{1})$ (modulo the inclusion of endpoints), and hence the persistence module $M_0^{\text{superlevel homology on } (\min f, \max f)}$ is the direct sum of interval modules of such forms and the $(\min f, \max f)$-interval module, generated by the connected component of $x_0$. We need to check the relationship between the $(\min f, \max f)$-interval module and the generators of the $\mathbb{F}$ in the first two columns for the row $q = 0$ in \cref{tab:homology_superlevel_even}. Indeed, in the persistence module $M_0^{\text{superlevel homology on } \mathbb{R}}$, this $(\min f, \max f)$-interval module extends to $-\infty$, because the $\mathbb{F}$ in the first two columns for the row $q = 0$ are also generated by the connected component of $x_0$. The corollary then follows.
\end{proof}

\section{The Case for Odd $k$}
\label{sec:odd_k}

Let ${\tilde C}_D$ be the concatenated complex of the reduced Morse complex (defined in \cref{def:reduced_homology}) of $f_{k,D}$ generated by critical points $x^\xi$ such that $\xi_i \leq 0$ for every $1 \leq i \leq D$, with concatenated differential ${\tilde \partial}_D: {\tilde C}_D \to {\tilde C}_D$. ${\tilde C}_D$ is well-defined because \cref{prop:morse_differential_tensor} guarantees that if $\xi_i \leq 0$ for every $1 \leq i \leq D$, then $\tilde \partial x^\xi = \partial x^\xi \in {\tilde C}_D$. The persistent homology for the full complex will be obtained by appealing to Alexander duality. Let
\begin{align*}
{\tilde C}^0_{D-1} &= \{x^\xi : \xi_i \leq 0, \xi_D = 0\} \cup \{x^0\} \\
{\tilde C}^-_{D-1} &= \{x^\xi : \xi_i \leq 0, \xi_D = -1\},
\end{align*}
where $x^0$ denotes the generator of ${\tilde C}_{-1}$. For example,
\begin{equation*}
{\tilde \partial}_1:
\begin{blockarray}{ccc}
  & x^0 & x^{-e_1} \\
\begin{block}{c[cc]}
  x^0      &   & 1 \\
  x^{-e_1} &   &   \\
\end{block}
\end{blockarray}.
\end{equation*}
Note that with the notation $x^0$, if we treat $0 = (0, ..., 0) \in \{-1, 0, 1\}^D$, \cref{eqn:morse_differential_odd} in \cref{prop:morse_differential_tensor} simplifies to
\begin{equation}\label{eqn:morse_reduced_differential_odd}
\tilde \partial x^\xi = \sum_{\substack{1 \leq i \leq D \\ \xi_i = -1}} x^{e_i + \xi}.
\end{equation}

Define $I_{D-1}: {\tilde C}^-_{D-1} \to {\tilde C}^0_{D-1}$ by $$I_{D-1} x^\xi = x^{\xi + e_D}
$$
Then $I_{D-1}$ is bijective.

We also define $J: {\tilde C}^-_{D-1} \to {\tilde C}^-_{D-1}$ by
$$J x^\xi = 
\begin{cases}
x^{\xi - e_1} & \text{ if } \xi_1 = 0 \\
0 & \text{ otherwise}.
\end{cases}$$
We do not subscript $J$ by $D-1$ because the above formula does not depend on $D$.

\begin{proposition}\label{prop:morse_differential_block_structure_odd}
We have the following recurrence for $D \geq 2$:
$${\tilde \partial}_D: 
\begin{blockarray}{ccc}
 & C_{D-1}^0 & C_{D-1}^- \\
\begin{block}{c[cc]}
  C_{D-1}^0    & {\tilde \partial}_{D-1} & I_{D-1}\\
  C_{D-1}^-    &   & I_{D-1}^{-1}{\tilde \partial}_{D-1}I_{D-1}\\
  \end{block}
\end{blockarray}.$$
\end{proposition}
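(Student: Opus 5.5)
The plan is to verify the block structure column by column, as in the proof of \cref{prop:morse_differential_block_structure}, using the explicit formula \cref{eqn:morse_reduced_differential_odd}, $\tilde\partial x^\xi = \sum_{\xi_i=-1} x^{e_i+\xi}$. Here $\tilde\partial_{D-1}$ is read as acting on $\tilde C^0_{D-1}$ through the identification $x^\xi \leftrightarrow x^{(\xi_1,\dots,\xi_{D-1})}$. The convention $x^0 = x^{(0,\dots,0)}$ is compatible with this identification. Since the generator sets are just $\tilde C_D = \tilde C^0_{D-1} \sqcup \tilde C^-_{D-1}$, no ordering or reduction is involved. The proof is a bookkeeping identity on the support of $\xi$.

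\textbf{First column ($x^\xi \in \tilde C^0_{D-1}$, so $\xi_D = 0$).} The index $i = D$ does not appear in the sum $\sum_{\xi_i = -1}$. Every summand $x^{e_i+\xi}$ therefore still has last coordinate $0$ and lies in $\tilde C^0_{D-1}$. Under the identification, the sum is exactly $\tilde\partial_{D-1}$ applied to the truncated vector. This includes the case $\xi = -e_j$ with $j<D$, where the boundary is $x^0$ in both $\tilde C_D$ and $\tilde C_{D-1}$. It also includes $x^0$ itself, whose boundary is the empty sum $0$ in both. So the lower-left block vanishes and the upper-left block is $\tilde\partial_{D-1}$.

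\textbf{Second column ($x^\xi \in \tilde C^-_{D-1}$, so $\xi_D = -1$).} Split the sum into the term $i = D$ and the terms $i < D$.
\begin{itemize}
\item The $i = D$ term is $x^{\xi + e_D} = I_{D-1} x^\xi \in \tilde C^0_{D-1}$. This gives the upper-right block. The case $\xi = -e_D$ is included: there $I_{D-1}x^{-e_D} = x^0$, consistent with $\tilde\partial x^{-e_D} = x^0$ in the reduced complex.
\item For $i<D$ with $\xi_i = -1$, the term $x^{e_i+\xi}$ still has last coordinate $-1$, so it lies in $\tilde C^-_{D-1}$. Write $\xi = \eta - e_D$ with $\eta_D = 0$, so $I_{D-1}x^\xi = x^\eta$. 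Then $\tilde\partial_{D-1} x^\eta = \sum_{i<D,\ \eta_i=-1} x^{e_i+\eta}$. Applying $I_{D-1}^{-1}$ subtracts $e_D$ from each summand, giving $\sum_{i<D,\ \xi_i=-1} x^{e_i+\xi}$.
\end{itemize}
This matches the lower-right block.

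\textbf{Main obstacle.} The one delicate point is the boundary term $x^0$ in this last computation, when $\eta = -e_j$ for a single $j<D$, i.e.\ $\xi = -e_j - e_D$. Then $\tilde\partial_{D-1}x^\eta = x^0$, and $I_{D-1}^{-1}x^0$ must be interpreted as $x^{-e_D}$. This is consistent: $I_{D-1}$ maps $x^{-e_D}$ to $x^0$, so $I_{D-1}$ is bijective onto $\tilde C^0_{D-1}$ including $x^0$. The correct reading is also confirmed by the formula: $\tilde\partial x^{-e_j-e_D} = x^{-e_D} + x^{-e_j}$, whose $\tilde C^-$ component is $x^{-e_D}$. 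I would state this convention explicitly and check it alongside the base case $D=2$ against \cref{prop:morse_differential_tensor}.
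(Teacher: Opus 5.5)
Your proposal is correct and takes essentially the same route as the paper. Both arguments apply the reduced-differential formula $\tilde\partial x^\xi=\sum_{\xi_i=-1}x^{e_i+\xi}$ and treat the left column as immediate. For $\xi_D=-1$, both split off the $i=D$ term $x^{\xi+e_D}=I_{D-1}x^\xi$, which gives the upper-right block, and identify the $i<D$ terms with $I_{D-1}^{-1}\tilde\partial_{D-1}I_{D-1}x^\xi$, the lower-right block. Your explicit check of the $x^0$ edge cases ($\xi=-e_D$ and $\xi=-e_j-e_D$), which the paper leaves implicit, is correct and worth including.
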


\begin{proof}
Direct application of \cref{prop:morse_differential_tensor}:
$$\tilde \partial x^\xi  = \sum_{\substack{1 \leq i \leq D \\ \xi_i = -1}} x^{e_i + \xi} = \sum_{\substack{1 \leq i \leq D-1 \\ \xi_i = -1}} x^{e_i + \xi} + x^{e_D + \xi},$$
where the sum corresponds to the lower-right block and the second term corresponds to the upper-right block. The left blocks are obvious.
\end{proof}

We column reduce $\tilde \partial_D$ inductively. We order $\tilde C_D$ with $\prec$ with the same definition as in \cref{def:order_on_chain_group}, except that the second condition $1 \prec -1 \prec 0$ degenerates to $-1 \prec 0$. Again, it can be directly verified that $I_D$ is monotone with respect to $\prec$ and 
\begin{equation}
I_{D-1} x^\xi \prec x^\xi.
\end{equation}
We define $V_D$ and $R_D$ as follows. Let $V_1$ be the identity and $R_1 = \tilde \partial_1$. Then, inductively,
\begin{align}
V_D: &
\begin{blockarray}{ccc}
  & {\tilde C}^0_{D-1} & {\tilde C}^-_{D-1} \\
\begin{block}{c[cc]}
  {\tilde C}^0_{D-1}    & V_{D-1} & J I_{D-1} \\
  {\tilde C}^-_{D-1}     &         & I_{D-1}^{-1} V_{D-1} I_{D-1} \\
  \end{block}
\end{blockarray} \notag
\\
R_D: &
\begin{blockarray}{ccc}
  & {\tilde C}^0_{D-1} & {\tilde C}^-_{D-1} \\
\begin{block}{c[cc]}
  {\tilde C}^0_{D-1}    & R_{D-1} & (V_{D-1} + \tilde \partial_{D-1} J)I_{D-1} \\
  {\tilde C}^-_{D-1}     &         & I_{D-1}^{-1}R_{D-1}I_{D-1} \\
  \end{block}
\end{blockarray}
\end{align}

\begin{proposition}\label{prop:column_reduction_odd}
$\tilde \partial_D$ is column reduced via $R_D = \tilde \partial_D V_D$ with respect to $\prec$. Further, if $R_D x^\xi = 0$, then $\xi_1 = 0$ and $V_D x^\xi = \tilde \partial_D Jx^\xi$; if $R_D x^\xi \neq 0$, then $\xi_1 \neq 0$, $V_D x^\xi = x^\xi$ and hence $R_D x^\xi = \tilde \partial_D x^\xi$, and its lowest generator is $x^{\xi + e_1}$.
\end{proposition}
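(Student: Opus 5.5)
The plan is to prove everything by induction on $D$, carrying the stronger characterization (what $V_D$ and $R_D$ do on each generator) as part of the induction hypothesis. This mirrors the even case in \cref{prop:column_reduce_boundary}. The base case $D=1$ is read off from $\tilde\partial_1$ with $V_1=I$ and $R_1=\tilde\partial_1$. Here $R_1x^{-e_1}=x^0=x^{-e_1+e_1}$, the generator $x^{-e_1}$ has $\xi_1\neq0$, and $x^0$ has $\xi_1=0$, $R_1x^0=0$ and $V_1x^0=x^0$. That last equality must match $\tilde\partial_1Jx^0=\tilde\partial_1x^{-e_1}=x^0$, so $x^0$ is to be treated as the vector with $\xi_1=0$, and $J$ extends to it by $Jx^0=x^{-e_1}$.

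\textbf{Factorization.} First I would verify $R_D=\tilde\partial_DV_D$ block by block, using the recurrence of \cref{prop:morse_differential_block_structure_odd}.
\begin{itemize}
\item The top-left block is $\tilde\partial_{D-1}V_{D-1}=R_{D-1}$ by induction.
\item The top-right block is $\tilde\partial_{D-1}JI_{D-1}+I_{D-1}\cdot I_{D-1}^{-1}V_{D-1}I_{D-1}=(V_{D-1}+\tilde\partial_{D-1}J)I_{D-1}$.
\item The bottom-right block is $I_{D-1}^{-1}\tilde\partial_{D-1}V_{D-1}I_{D-1}=I_{D-1}^{-1}R_{D-1}I_{D-1}$.
\end{itemize}
So the factorization is purely formal.

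\textbf{Characterization.} Next I would establish the characterization, which is the heart of the argument. On $\tilde C^0_{D-1}$ it is inherited verbatim from $D-1$. For $x^\xi\in\tilde C^-_{D-1}$, set $x^{\xi'}=I_{D-1}x^\xi$, which has the same first coordinate as $\xi$.
\begin{itemize}
\item \emph{Case $\xi_1=0$.} By induction $R_{D-1}x^{\xi'}=0$ and $V_{D-1}x^{\xi'}=\tilde\partial_{D-1}Jx^{\xi'}$. Over $\mathbb{Z}/2\mathbb{Z}$ both right-hand blocks therefore kill $x^\xi$, so $R_Dx^\xi=0$. Then $V_Dx^\xi=JI_{D-1}x^\xi+I_{D-1}^{-1}\tilde\partial_{D-1}JI_{D-1}x^\xi$, and I would check this equals $\tilde\partial_DJx^\xi$ using \cref{eqn:morse_reduced_differential_odd}. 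Indeed, $Jx^\xi=x^{\xi-e_1}$ has $D$-th coordinate $-1$, and its boundary splits into the $e_D$-term $x^{\xi'-e_1}$ plus the terms with the remaining indices.
\item \emph{Case $\xi_1=-1$.} Here $JI_{D-1}x^\xi=0$ and $V_{D-1}x^{\xi'}=x^{\xi'}$, so $V_Dx^\xi=x^\xi$ and $R_Dx^\xi=\tilde\partial_Dx^\xi=\sum_{\xi_i=-1}x^{\xi+e_i}$.
\end{itemize}

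\textbf{Lowest generator.} In the second case the lowest generator is the $\prec$-largest term. Each term has critical value $-\nu_{E-\{i\}}$ with $E=\operatorname{supp}\xi$, and $\nu_{E-\{i\}}$ is decreasing in $\sum_{j\in E-\{i\}}\lambda_j^{-2/(k-2)}$. This sum is largest when $\lambda_i^{-2/(k-2)}$ is smallest, i.e.\ when $i=1$. So $x^{\xi+e_1}$ has the largest value, and ties (e.g.\ $\lambda_1=\lambda_2$) are broken by the lexicographic rule $-1\prec0$ in the first coordinate.

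\textbf{Reducedness and unit upper triangularity.} Reducedness is then immediate: $\xi\mapsto\xi+e_1$ is injective on $\{\xi_1=-1\}$, so distinct nonzero columns have distinct lowest generators. For unit upper triangularity, the diagonal blocks follow by induction together with monotonicity of $I_{D-1}$. The only new entries are those of $JI_{D-1}x^\xi=x^{\xi+e_D-e_1}$ when $\xi_1=0$, and I need $x^{\xi+e_D-e_1}\prec x^\xi$. Its support is $\operatorname{supp}\xi\cup\{1\}-\{D\}$, and swapping $\lambda_D^{-2/(k-2)}$ for the smaller $\lambda_1^{-2/(k-2)}$ decreases the sum. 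This raises $\nu$ and lowers the critical value $-\nu$, with equality cases again settled lexicographically.

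I expect the main obstacle to be the bookkeeping in the $\xi_1=0$ case, where $V_Dx^\xi=\tilde\partial_DJx^\xi$ must be matched across the two blocks including the $x^0$ generator. Handling the tie-breaking under $\prec$ in the order comparisons is a second, smaller source of care.
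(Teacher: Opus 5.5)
Your proposal is correct and follows the paper's proof essentially step for step: induction on $D$, the formal block-by-block factorization, unit upper triangularity reduced to $JI_{D-1}x^\xi = x^{\xi+e_D-e_1} \prec x^\xi$, the characterization of columns with $V_D x^\xi = \tilde\partial_D J x^\xi$ checked via the boundary formula, the lowest generator $x^{\xi+e_1}$ obtained by comparing critical values, and reducedness from injectivity of $\xi \mapsto \xi + e_1$. The differences are only in packaging. You split cases on $\xi_1$, using that the inductive hypothesis amounts to $R_{D-1}x^{\xi'} = 0 \iff \xi'_1 = 0$, rather than on whether $R_D x^\xi$ vanishes, and you make explicit the convention $Jx^0 = x^{-e_1}$ and the lexicographic tie-breaking, both of which the paper leaves implicit.
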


\begin{proof}
The case for $D = 1$ is trivial. For the inductive step, $R_D = \tilde \partial_D V_D$ holds inductively by \cref{prop:morse_differential_block_structure_odd} and by construction.

To check that $V_D$ is unit-upper-triangular, since $I_{D-1}$ is monotone, it suffices to show that
$$JI_{D-1}x^\xi \prec x^\xi.$$
Note that simply $JI_{D-1}x^\xi = x^{\xi + e_D - e_1}$. Then the desired relation follows directly from computation and the facts that $\lambda_1 \geq \lambda_D$ and $-1 \prec 0$.

Before showing $R_D$ is reduced, we first show the claim in the proposition about zero and nonzero columns of $R_D$. By the inductive hypothesis, it suffices to consider $x^\xi \in C^-_{D-1}$.

Suppose $R_D x^\xi = 0$. Then
\begin{equation}\label{eqn:zero_column_R_odd_first_block}
(V_{D-1}I_{D-1} + \tilde \partial_{D-1} JI_{D-1}) x^\xi = 0
\end{equation}
and $R_{D-1} (I_{D-1} x^\xi) = 0$. The latter implies $\xi_1 = 0$ via the inductive hypothesis. We verify that $V_D x^\xi = \tilde \partial_D Jx^\xi$ as follows.
\begin{align*}
V_D x^\xi
&= JI_{D-1} x^\xi + I_{D-1}^{-1} V_{D-1} I_{D-1} x^\xi
\\&= JI_{D-1} x^\xi + I_{D-1}^{-1} \tilde \partial_{D-1} JI_{D-1} x^\xi & \text{by \cref{eqn:zero_column_R_odd_first_block}, coeff in $\mathbb{Z}/2\mathbb{Z}$}
\\&= x^{\xi + e_D - e_1} + I_{D-1}^{-1} \left(x^{\xi + e_D} + \sum_{\substack{2 \leq i \leq D-1 \\ \xi_i \neq 0}} x^{\xi + e_i - e_1 + e_D} \right) & \text{by \cref{eqn:morse_reduced_differential_odd} and } \xi_1 = 0
\\&= x^{\xi + e_D - e_1} + x^{\xi} + \sum_{\substack{2 \leq i \leq D-1\\\xi_i \neq 0}} x^{\xi + e_i - e_1}
\\&= Jx^{\xi + e_D} + x^{\xi} + \sum_{\substack{2 \leq i \leq D-1\\\xi_i \neq 0}} Jx^{\xi + e_i}
\\&= \tilde \partial_D Jx^\xi. & \text{by \cref{eqn:morse_reduced_differential_odd}}
\end{align*}

Suppose $R_D x^\xi \neq 0$. Then at least one of $(V_{D-1} + \tilde \partial_{D-1} J) I_{D-1} x^\xi$ and $I_{D-1}^{-1} R_{D-1} I_{D-1} x^\xi$ is nonzero. We claim that the latter must be nonzero, and hence the inductive hypothesis implies the first coordinate of $I_{D-1} x^\xi = x^{\xi + e_D}$, which is simply $\xi_1$, is nonzero. Suppose, for a contradiction, that $I_{D-1}^{-1} R_{D-1} I_{D-1} x^\xi$ is zero. The inductive hypothesis implies $(V_{D-1} + \tilde \partial_D J)I_{D-1} x^\xi = 0$. Then both components of $R_D x^\xi$ are zero, contradictory to our assumption that $R_D x^\xi \neq 0$.

To show $V_D x^\xi = x^\xi$, since $\xi_1 \neq 0$, the first component of $V_D x^\xi$, namely $JI_{D-1} x^\xi$, vanishes by the definition of $J$. Therefore, $V_Dx^\xi$ is simply the second component: $I_{D-1}^{-1} R_{D-1} I_{D-1} x^\xi$, which, in the preceding paragraph, has been shown to be nonzero. The induction hypothesis then implies $V_{D-1} I_{D-1}x^\xi = I_{D-1}x^\xi$, and hence $V_Dx^\xi = I_{D-1}^{-1} R_{D-1} I_{D-1} x^\xi = x^\xi$.

Then $R_D x^\xi = \tilde \partial_D V_D x^\xi = \tilde \partial_D x^\xi$. Its lowest generator takes the form $x^{\xi + e_i}$ for some $i$. Considering their critical values shows that $i$ must be the smallest among $j$'s such that $\xi_j \neq 0$. Since $\xi_1 \neq 0$, the lowest generator is $x^{\xi + e_1}$.

Finally, we check that $R_D$ is reduced. The lowest generators of nonzero $R_D x^\xi$ with $x^\xi \in \tilde C^0_{D-1}$ are by construction in $\tilde C^0_{D-1}$, and they are unique among themselves because $R_{D-1}$ is reduced. For lowest generators of nonzero $R_D x^\xi$ with $x^\xi \in \tilde C^-_{D-1}$, by the last claim of the proposition, which has been established in the preceding paragraphs, they are simply $x^{\xi + e_1} \in \tilde C^-_{D-1}$, which must be distinct from those in $\tilde C^0_{D-1}$. They must also be distinct among themselves, because $\xi \mapsto \xi + e_1$ is injective.

\end{proof}

\begin{proof}[Proof of \cref{thm:persistence_diagram_sublevel_odd}]
By \cref{lem:persistence_diagram_reduced_homology}, it suffices to consider reduced homology. \cref{prop:column_reduction_odd} shows that the persistence diagram of the persistent reduced homology of $f_{k,D}$ over the range $[\min f_{k,D}, 0]$ consists of points $(f_{k,D}(x^\xi), f_{k,D}(x^{\xi - e_1}))$ with multiplicity 1, where $\xi_1 \neq 0$ and the homological dimension is determined by $\{i: \xi_i \neq 0\}$. These points give rise to the points indexed by $E^-$ in \cref{thm:persistence_diagram_sublevel_odd}. Note that all death times are strictly less than 0.

By Alexander duality and duality between homology and cohomology (\cref{thm:alexander_duality,thm:persistent_duality}), we have
\begin{align*}
H_q(\{x: f(x) \leq t\})
& \cong H_q(\{x: f(x) < t\}) & \text{(\cref{lem:strict_sublevel_homologous_to_sublevel})}
\\& \cong H_q(S^{D-1} - \{x: f(x) \geq t\})
\\& \cong H^{D-2-q}(\{x: f(x) \geq t\}) & \text{(Alexander duality, i.e. \cref{thm:alexander_duality})}
\\& \cong H_{D-2-q}(\{x: f(x) \geq t\}) & \text{(duality, i.e. \cref{thm:persistent_duality})}
\\& \cong H_{D-2-q}(\{y: f(-y) \geq t\}) & \text{(The homeomorphism $x \mapsto -x$}
\\&& \qquad \qquad \text{induces a homological isomorphism)}
\\& \cong H_{D-2-q}(\{y: f(y) \leq -t\})
\end{align*}

Therefore, the persistence diagram at a dimension over the range $[0, \max f_{k,D}]$ is the persistence diagram at the dual dimension over the range $[\min f_{k,D}, 0]$ with a sign flip and birth-death flip. This gives rise to points indexed by $E^+$ in \cref{thm:persistence_diagram_sublevel_odd}.
\end{proof}

\section{Connected Components of the Sublevel and Superlevel Filtrations}\label{sec:connected_components}

\begin{proof}[Proof of \cref{prop:components_sublevel_even}]
First, note that each $H^\xi$ is a connected component of $\cup_{\xi \in \Xi_i} H^\xi$. To see this, we check that each $H^\xi$ is both open and closed in $\cup H^\xi$. $H^\xi$ is open as a finite intersection of open half-spaces. To check that $H^\xi$ is closed in $\cup H^\xi$, let $x^{(n)} \in H^\xi$ and $x^{(n)} \to x \in \cup H^\xi$ ($x \in \cup H^\xi$ is a crucial condition). Then $x_j \geq 0$ if $\xi_j > 0$ and $x_j \leq 0$ if $\xi_j < 0$. Since $x \in \cup H^\xi$, $x_j \neq 0$ for $j \geq i$. Therefore, $x \in H^\xi$. The closedness of $H^\xi$ in $\cup H^\xi$ then follows.

Now, for the first claim, it suffices to show that $\{x \in S^{D-1}: f_{k, D}(x) \leq t\} \subseteq \cup_{\xi \in \Xi_i} H^\xi$, because every connected set in $\cup_{\xi \in \Xi_i} H^\xi$, in particular, every connected component of $\{x \in S^{D-1}: f_{k, D}(x) \leq t\}$, lies in one of its components.

We show the contrapositive. Suppose $x \notin \cup_{\xi \in \Xi_i} H^\xi$, then $x_j = 0$ for some $j \geq i$, hence $x$ lies in the sub-sphere $S^{D-2} = \{x \in S^{D-1}: x_j = 0\}$. Applying \cref{thm:persistence_diagram_sublevel_even} (or by a direct computation) to $f_{k, D}$ restricted to $S^{D-2}$ shows that the global minimum of $f_{k, D}$ on $S^{D-2}$ is $\nu_{\{1, ..., D\} - \{j\}} \geq \nu_{\{1, ..., D\} - \{i\}} > t$. Then $f_{k,D}(x) > t$. The desired contrapositive, and hence the first claim then follows.

For the second claim, it suffices to show local minima points in each $H^\xi$ is connected by gradient flow lines in $\{x \in S^{D-1}: f_{k, D}(x) \leq t\}$, as all points in $\{x \in S^{D-1}: f_{k, D}(x) \leq t\}$ are connected to a critical point through a gradient descent flow, and each critical point is connected to a local minimum through a sequence of gradient descent flow lines (if the target is not a local minimum, there is a gradient descent flow originating from this target, the process iterates until a local minimum is reached). For the case $i = 1$, the fourth point of \cref{lem:critical_points_of_tensor} implies that each $H^\xi$ with $\xi \in \Xi_1$ contains exactly one local minimum (index-0 critical point) and no other critical point (because $\nu_{\{1, ..., D\} - \{1\}} < \nu_{\{1, ..., D\} - E}$ whenever $E$ is nonempty), the case for $i = 1$ then follows. Suppose the claim is true up to $i - 1$. Fix $H^\xi$. Then $H^{e_{i-1} + \xi} \cup H^{-e_{i-1} + \xi} \subseteq H^\xi$. Let $\zeta = \xi + \sum_{j < {i-1}} e_j$. Then $f_{k,D}(x^\zeta) = \nu_{\{1, ..., D\} - \{i-1\}}$. By \cref{prop:morse_differential_tensor}, it has a gradient descent flow line to $x^{\zeta + e_{i-1}} \in H^{e_{i-1} + \xi}$ and $x^{\zeta - e_{i-1}} \in H^{-e_{i-1} + \xi}$. Since the fourth point of \cref{lem:critical_points_of_tensor} implies there is a local minimum in $[\nu_{\{1, ..., D\} - \{i - 1\}}, \nu_{\{1, ..., D\} - \{i\}})$ for each $i > 1$, all local minima in $H^\xi$ are connected. The second claim then follows.
\end{proof}

\begin{proof}[Proof of \cref{prop:components_superlevel_even}]
This proposition is a direct corollary of the first and fourth point of \cref{lem:critical_points_of_tensor} and \cref{prop:morse_differential_tensor}. Specifically, there are gradient descent flow lines from $x^{\pm e_i}$ and $x^{e_1}$ to $x^{e_1 \pm e_i}$. Therefore, the merge of the components of $\pm e_i$ and $e_1$ must happen by $\nu_{\{1, i\}}$. The merge cannot happen earlier because otherwise \cref{cor:persistence_diagram_superlevel_even} is contradicted. The case for the connected component of $-e_1$ is similar.
\end{proof}

\begin{proof}[Proof of \cref{prop:components_sublevel_odd}]
Similar to that of \cref{prop:components_superlevel_even}.
\end{proof}

\section{Probabilistic Results}
\label{sec:probabilistic_results}

\begin{proof}[Proof of \cref{prop:LLN_sublevel_persistence_dim_0_even}]
Within this proof, let $\hat \nu = \nu_{\{1, ..., D\}}$ and $\hat \nu_i = \nu_{\{1, ..., D\} - \{i\}}$. Then $\mathcal{D}_0^\text{finite}$ consists of points of the form $(\hat \nu, \hat \nu_i)$, whose multiplicity is $2^{D-i}$.

Since
$$\frac{1}{D} (k\hat \nu)^{-\frac{2}{k-2}} = \frac{1}{D} \sum \lambda_i^{-\frac{2}{k-2}} \to I_k,$$
we have
\begin{align*}
k(\hat \nu_i - \hat \nu)
&= \left((k\hat \nu)^{-\frac{2}{k-2}} - \lambda_i^{-\frac{2}{k-2}}\right)^{-\frac{k-2}{2}} - k \hat \nu
\\&= k \hat \nu \left[ \left( 1 - (\frac{\lambda_i}{k\hat \nu})^{-\frac{2}{k-2}} \right)^{-\frac{k-2}{2}} - 1\right]
\\&= k\hat \nu \left[ \frac{k-2}{2} (\frac{\lambda_i}{k\hat \nu})^{-\frac{2}{k-2}} + O( (\frac{1}{\hat \nu} )^{-\frac{4}{k-2}}) \right]
\\&= (k\hat \nu)^\frac{k}{k-2} \frac{k-2}{2} \lambda_i^{-\frac{2}{k-2}} + O(\hat \nu^{\frac{k+2}{k-2}})
\\&= D^{-\frac{k}{2}} \left(\frac{k-2}{2} I_k^{-\frac{k}{2}} \lambda_i^{-\frac{2}{k-2}} + o(1) \right).
\end{align*}
Rearranging terms gives
$$D^{\frac{k}{2}}(\hat \nu_i - \hat \nu) = \frac{k-2}{2k} I_k^{-\frac{k}{2}} \lambda_i^{-\frac{2}{k-2}} + o(1)$$

We now verify the weak convergence. Let $\varphi: \mathbb{R} \to \mathbb{R}$ be a continuous function. Since $D^{k/2} (\hat \nu_i - \hat \nu)$ is bounded, without loss of generality, assume further that $\varphi$ is compactly supported and uniformly continuous.
Then 
\begin{align*}
\int \varphi d\mathcal{L}_D
&= \frac{1}{2^D} \sum_{1 \leq i \leq D} 2^{D-i} \varphi(D^{k/2} (\hat \nu_i - \hat \nu))
\\&= \sum_{1 \leq i \leq D} 2^{-i} \varphi(\frac{k-2}{2k} I_k^{-\frac{k}{2}} \lambda_i^{-\frac{2}{k-2}} + o(1)).
\\&= \sum_{1 \leq i \leq N} 2^{-i} \varphi(\frac{k-2}{2k} I_k^{-\frac{k}{2}} \lambda_i^{-\frac{2}{k-2}} + o(1)) + o(\varepsilon) & \text{ if } 2^{-N} < \varepsilon
\\&= \sum_{1 \leq i \leq N} 2^{-i} \varphi(\frac{k-2}{2k} I_k^{-\frac{k}{2}} \lambda_i^{-\frac{2}{k-2}}) + o(1) + O(\varepsilon) & \text{by uniform continuity}
\\&= \sum_{1 \leq i \leq N} 2^{-i} \varphi(\frac{k-2}{2k} I_k^{-\frac{k}{2}} \lambda_+^{-\frac{2}{k-2}}) + o(1) + O(\varepsilon) & \text{ for } i \leq N, \lambda_i \to \lambda_+ 
\\&= (1 - 2^{-N}) \varphi(\frac{k-2}{2k} I_k^{-\frac{k}{2}} \lambda_+^{-\frac{2}{k-2}}) + o(1) + O(\varepsilon)
\\&= \varphi(\frac{k-2}{2k} I_k^{-\frac{k}{2}} \lambda_+^{-\frac{2}{k-2}}) + o(1) + O(\varepsilon)
\end{align*}
The result then follows.
\end{proof}

\begin{proof}[Proof of \cref{prop:LLN_superlevel_persistence_dim_0_even}]
\begin{align}
\int \varphi d\mathcal{L}_D
&= \frac{1}{D} \left(\varphi(\nu_{\{1, 2\}}, \nu_{\{1\}}) + \sum_{2 \leq i \leq D} 2\varphi(\nu_{\{1, i\}}, \nu_{\{i\}})\right) \notag
\\&= \frac{2}{D} \sum_{1 \leq i \leq D} \varphi(\nu_{\{1, i\}}, \nu_{\{i\}}) + O(1/D) \notag
\\&= \frac{2}{D} \sum_{1 \leq i \leq D} \varphi\left(\left(\frac{1}{k} \left(\lambda_i^{-\frac{2}{k-2}} + \lambda_1^{-\frac{2}{k-2}}\right)^{-\frac{k-2}{2}}, \lambda_i/k \right)\right) + O(1/D) \notag
\\&= \frac{2}{D} \sum_{1 \leq i \leq D} \varphi\left(\left(\frac{1}{k} \left(\lambda_i^{-\frac{2}{k-2}} + \lambda_+^{-\frac{2}{k-2}}\right)^{-\frac{k-2}{2}}, \lambda_i/k \right)\right) + o(1) + O(1/D)  \label{eqn:superlevel_LLN_proof_line} 
\\&\to 2 \int_{\lambda_-}^{\lambda_+} \varphi\left(\left(\frac{1}{k} \left(\lambda^{-\frac{2}{k-2}} + \lambda_+^{-\frac{2}{k-2}}\right)^{-\frac{k-2}{2}}, \lambda/k \right) \right) \rho_{MP}(\lambda) d \lambda \notag
\\&= 2\int \varphi d(\nu^* \rho_{MP}(\lambda) d\lambda), \notag
\end{align}
where \cref{eqn:superlevel_LLN_proof_line} follows from the convergence $\lambda_1 \to \lambda^+$ and the uniform continuity of $(x,y) \mapsto \left(x^{-\frac{2}{k-2}} + y^{-\frac{2}{k-2}}\right)^{-\frac{k-2}{2}}$ for  $(x, y) \in [\frac{1}{2}\lambda_-, 2\lambda_+]^2$.
\end{proof}

\appendix

\section{Homological Algebra}
\label{sec:homological_algebra}

\begin{definition}[Chain Complex and its Boundary Homomorphism]\label{def:chain_complex}
A sequence of abelian groups (resp. vector spaces) $(C_q)_{q \in \mathbb{Z}}$ with homomorphisms (resp. linear maps) $\partial_q: C_q \to C_{q-1}$ is said to be a chain complex if 
\begin{equation}\label{eqn:boundary_squared}
\partial_q \partial_{q+1} = 0
\end{equation} for every $q$. The $\partial_q$'s are called the boundary homomorphisms of $(C_q)$. The groups (resp. vector spaces) $C_q$'s are called chain groups (resp. vector spaces). Elements of chain groups are called chains.
\end{definition}

\begin{remark}$\quad$
\begin{itemize}
\item We often drop the boundary homomorphism in the notation of $(C_q)$ to simplify notation.
\item We often denote $(C_q)$ by $C_\bullet$ to emphasize that we are referring to the whole complex rather than an individual group in the complex.
\item When constructing chain complexes, we often just define the chain groups for certain indices, say nonnegative integers, rather than all integers. Unless otherwise specified, the undefined groups are understood to be $0$.
\end{itemize}
\end{remark}

\cref{eqn:boundary_squared} ensures that $\im \partial_{q+1} \subseteq \ker \partial_q$. \emph{Homology} quantifies the extent to which this inclusion fails to be an equality.

\begin{definition}[Homology]
Let $(C_q)$ be a chain complex with boundary homomorphisms $\partial_q$'s.
For each $q \geq 0$, the $q^{th}$ homology group $H_q(C_\bullet)$ of $(C_q)$ is the quotient group
$$H_q(C_\bullet) = \ker \partial_q / \im \partial_{q+1},$$
\end{definition}

\begin{remark}
We often denote $(H_q(C_\bullet))$ by $H_*(C_\bullet)$ to emphasize that we are referring to the whole sequence of homology groups rather than an individual homology group.
\end{remark}

Denote by $\Delta^q$ the standard $q$-dimensional simplex, i.e. the convex hull of the standard basis $\{e_0, ..., e_q\}$ in $\mathbb{R}^{q+1}$.

\begin{definition}[Singular Homology with Coefficients]\label{def:singular_homology}
Let $X$ be a topological space and $G$ be an abelian group. The singular chain complex $C_\bullet(X; G)$ with coefficients in $G$ is defined as follows.
\begin{itemize}
\item For each $q \geq 0$, the $q^{th}$ singular chain group is $$C_q(X; G) = \{\sum_{1 \leq i \leq N} g_i \sigma_i : g_i \in G, N \in \mathbb{N}_0, \sigma_i \text{ is a continuous map from $\Delta^q$ to $X$}\}.$$
\item For $q \geq 1$, the boundary homomorphism $\partial_q: C_q(X; G) \to C_{q-1}(X; G)$ is defined by
$$\partial_q \sigma = \sum_{0 \leq i \leq q} (-1)^i \sigma \circ \Phi_{e_0, ..., \hat e_i, ..., e_q},$$
where $\circ$ means function composition, and $\Phi_{e_0, ..., \hat e_i, ..., e_q}: \Delta^{q-1} \to \Delta^q$ is the affine map that sends $e_0, ..., e_{i-1}$ to $e_0, ..., e_{i-1}$ and $e_{i+1}, ..., e_{q-1}$ to $e_i, ..., e_q$.
\end{itemize}
The singular homology groups of $X$ with coefficients in $G$ are those of $C_\bullet(X; G)$.

The elements of singular chain groups are called singular chains.
\end{definition}

\begin{remark}$\quad$
\begin{enumerate}
\item Note that if $G$ is a field, the singular chain groups and homology groups are vector spaces over the field.
\item Even though each $C_\bullet(X; G)$ often consists of groups with infinite rank (or infinite-dimensional vector spaces if $G$ is a field), $H_*(X; G)$ often has finite rank (or dimension) in practice.
\end{enumerate}
\end{remark}

\begin{definition}[Induced maps at the level of chains and homology]
Let $f: X \to Y$ be a continuous function and $G$ be an abelian group. For each $q$, the induced map $f_\#: C_q(X; G) \to C_q(Y; G)$ on the chain level is defined by mapping each $\sigma: \Delta^q \to X$ to $f \circ \sigma$, where $\circ$ denotes composition. The induced map $f_*: H_q(X; G) \to H_q(Y; G)$ at the level of homology is defined by $f_*(z + \im \partial^X_{q+1}) = f_\# z + \im \partial^Y_{q+1}$, where $\partial^X_{q+1}$ and $\partial^Y_{q+1}$ are the boundary homomorphisms at dimension $q+1$ of the singular chain complexes of $X$ and $Y$ respectively.
\end{definition}

Induced homomorphisms are functorial, in the sense that
\begin{itemize}
\item $(\id_{X})_q = \id_{H_q(X)}$ for every topological space $X$, where $\id$ denotes the identity map or homomorphism, and
\item $(gf)_* = g_* f_*$ for every pair of continuous functions $f: X \to Y$ and $g: Y \to Z$.
\end{itemize}
(Cf. 
items (i) and (ii) on p.111 of \citep{hatcher02_algtopo}
).

The most commonly considered induced maps are those induced by inclusion:
\begin{definition}[Inclusion Map]
Let $Y$ be a topological space and $X \subseteq Y$. The inclusion map $i: X \to Y$ is defined by $i(x) = x$.
\end{definition}

Related to the notion of functoriality is the phrase ``commutativity". In homological algebra, a \emph{diagram} is a directed graph where each node is a set (usually groups or vector spaces) and each edge is a function (usually homomorphisms or linear maps) from the source set to the target set. A diagram is said to \emph{commute} if the composition of the functions along paths with the same source and target are equal. For example, consider the diagram
$$
\begin{tikzcd}
\{0\} \arrow[d, "0 \mapsto 4"] \arrow[r, "0 \mapsto 1"] & \{1, 2\} \arrow[d, "0"]
\\
\{3, 4\} \arrow[r, "0"] & \{0, 1\}                                 
\end{tikzcd},
$$
where the arrows annotated by $0$ correspond to the constantly 0 function. This diagram commutes, because applying the top horizontal map and then the right vertical map is the same as applying the left vertical map and then the bottom horizontal map (both maps $0$ to $0$). On the other hand, the diagram
$$
\begin{tikzcd}
\{0\} \arrow[d, "0 \mapsto 4"] \arrow[r, "0 \mapsto 1"] & \{1, 2\} \arrow[d, "0"]
\\
\{3, 4\} \arrow[r, "x \mapsto x - 3"] & \{0, 1\}                                   
\end{tikzcd}
$$
which differs from the previous diagram only by the bottom map, does \emph{not} commute, because the composition of the top and right maps sends $0$ to $0$ and the composition of the left and bottom maps sends $0$ to $1$.

Now, we discuss reduced homology, which is needed for the statement of Alexander duality. We give two equivalent definitions. One shows it is well-defined (in particular, it is independent of the choice of base points), the other is the operational one.

\begin{definition}[Reduced Homology, p.110 of \citep{hatcher02_algtopo}]\label{def:reduced_homology}
Consider a chain complex $C_\bullet$ with coefficients in an abelian group $G$. Suppose $C_q = 0$ for $q < 0$ and $C_0$ has a $G$-basis $\beta$, i.e. every element can be written as a unique finite linear combination of elements in this basis with coefficients in $G$. Define ${\tilde C}_\bullet$ as follows.
\begin{itemize}
\item For $q \neq -1$, ${\tilde C}_q = C_q$. For $q \notin \{0, -1\}$, ${\tilde \partial}_q = \partial_q$.
\item $\tilde C_{-1} = G$, and
$${\tilde \partial}_0 \sum g_i \sigma_i = \sum g_i$$
for $\sigma_i \in \beta$, and ${\tilde \partial}_{-1} = 0$.
\end{itemize}
If $\tilde \partial_0 \partial_1 = 0$, then ${\tilde C}_\bullet$ is said to be the reduced complex of $C_\bullet$, and the reduced homology of $C_\bullet$ is the homology of ${\tilde C}_\bullet$.
\end{definition}

It can be shown that for the singular complex and the Morse complex of a nonempty space, the reduced homology is well-defined. In this case,
\begin{equation*}
\tilde H_q(C_\bullet; G) \cong \begin{cases}
H_q(C_\bullet; G) & \text{ if } q \neq 0
\\
H_0(C_\bullet; G) \oplus G & \text{ if } q = 0
\end{cases}
\end{equation*}

\begin{definition}[Pair of Topological Spaces and Relative Homology]$\quad$
\begin{itemize}
\item A two-tuple of topological spaces $(X, A)$ is said to be a pair if $A$ is a subspace of $X$. We identify a space $X$ with the pair $(X, \emptyset)$, where $\emptyset$ denotes the empty set.
\item A continuous map $f: (X, A) \to (Y, B)$ is a continuous map $f: X \to Y$ such that $f(A) \subseteq B$.
\item The relative homology $H_*(X, A; G)$ with coefficients in an abelian group $G$ is the homology of the chain complex with groups $C_q(X, A; G) = C_q(X; G)/C_q(A; G)$ and boundary homomorphisms induced by those of $C_\bullet(X)$.
\end{itemize}
\end{definition}

\begin{proposition}[Reduced Homology as Relative Homology, Example 2.18 of \citep{hatcher02_algtopo}] Let $X$ be a nonempty topological space and $x_0 \in X$. Let $G$ be an abelian group. Then $\tilde H_q(X; G) \cong H_q(X, x_0; G)$.
\end{proposition}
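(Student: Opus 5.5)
We prove $\tilde H_q(X; G) \cong H_q(X, x_0; G)$ by comparing the long exact sequence of the pair $(X, x_0)$ with the augmented complex $\tilde C_\bullet(X; G)$ of \cref{def:reduced_homology}, since both differ from $C_\bullet(X; G)$ only in low degrees.

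First, we record the homology of a point. In the singular complex of $\{x_0\}$ there is exactly one singular $q$-simplex $\sigma_q$ in each degree. Its boundary is $\partial_q \sigma_q = \sum_{i=0}^q (-1)^i \sigma_{q-1}$, which is $\sigma_{q-1}$ for even $q \geq 2$ and $0$ for odd $q$. Hence $H_q(x_0; G) = 0$ for $q > 0$ and $H_0(x_0; G) = G$. The augmented complex of the point is then exact in every degree, i.e. $\tilde H_*(x_0; G) = 0$.

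Next, we use the short exact sequence of chain complexes $0 \to C_\bullet(x_0) \to C_\bullet(X) \to C_\bullet(X, x_0) \to 0$, with augmentations. Extending $C_\bullet(X,x_0)$ by $0$ in degree $-1$ keeps it unchanged, and $0 \to G \xrightarrow{\mathrm{id}} G \to 0 \to 0$ is exact in degree $-1$. So the sequence $0 \to \tilde C_\bullet(x_0) \to \tilde C_\bullet(X) \to C_\bullet(X, x_0) \to 0$ is still short exact. The only point to check is that the inclusion commutes with the augmentation maps $\tilde\partial_0$; it does, since $\sigma_0 \mapsto 1$ on both sides. The associated long exact sequence then reads
$$\cdots \to \tilde H_q(x_0; G) \to \tilde H_q(X; G) \to H_q(X, x_0; G) \to \tilde H_{q-1}(x_0; G) \to \cdots.$$
Because every $\tilde H_*(x_0; G)$ vanishes, each middle map is an isomorphism, including at $q = 0$.

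The main obstacle is only the bookkeeping in degrees $0$ and $-1$. We must confirm that the augmented sequence is exact there and that the snake-lemma connecting maps are defined for this $\mathbb{Z}$-graded complex. Both are routine, since the snake-lemma construction of the long exact sequence is indifferent to the grading starting at $-1$. The isomorphism is natural in based maps $(X, x_0) \to (Y, y_0)$, because it is induced by the chain-level quotient map.
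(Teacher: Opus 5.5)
Your proof is correct and follows essentially the same approach as the paper. The paper gives no argument of its own and simply cites Hatcher's Example 2.18, which is exactly the long exact sequence of the augmented short exact sequence $0 \to \tilde C_\bullet(x_0) \to \tilde C_\bullet(X) \to C_\bullet(X, x_0) \to 0$ combined with $\tilde H_*(x_0; G) = 0$; your computation of the point's chain complex and your exactness check in degree $-1$ make explicit the bookkeeping that the citation leaves implicit.
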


\begin{definition}[Persistent Reduced Homology]
Let $f: X \to \mathbb{R}$ be a continuous function that attains its infimum and $G$ be an abelian group. Let $x_0$ be a global minimum of $f$. The persistent reduced homology of the sublevel filtration of $f$ with coefficients in $G$ is the persistence module where each group is the reduced homology of $f^{-1}[t, \infty)$, where $t$ ranges from $[\min f, \infty)$. It is isomorphic to the persistence module of $H_q(f^{-1}[t, \infty), x_0)$.
\end{definition}

\begin{definition}[Cohomology]\label{def:cohomology}
Let $C_\bullet$ be a chain complex and $G$ be an abelian group. Define the (reversed) chain complex $C^\bullet(G)$ as follows.(Note the superscript in the notation.) Let $C^q(G)$ be the group of homomorphisms from $C_q$ to $G$. The addition in $C^q(G)$ is pointwise addition. Define $\partial^q: C^q \to C^{q+1}$ by $\partial^q \varphi = \varphi \circ \partial_{q+1}$. Then $\partial^q \partial^{q-1} = 0$. The cohomology group with coefficients in $G$ is defined by $H^q(C^\bullet; G) = \ker \partial^q / \im \partial^{q-1}$. Elements of $C^q(G)$ are called cochains. The singular cohomology of a topological space $X$ with coefficients in $G$ is the cohomology of the singular chain complex of $X$ with coefficients in $G$.
\end{definition}

\begin{definition}[Induced Maps for Cohomology]
Let $f: X \to Y$ be a continuous function and $G$ be an abelian group. For each $q$, the induced map $f^\#: C^q(Y; G) \to C^q(X; G)$ (note the reversed direction) on the chain level is defined by $f^\# \varphi = \varphi \circ f_\#$, where $\circ$ denotes composition. The induced map $f^*: H^q(Y; G) \to H^q(X; G)$ (note the reversed direction) at the level of homology is defined by $f^*(z + \im \partial_X^{q+1}) = f^\# z + \im \partial_Y^{q+1}$, where $\partial_X^{q+1}$ and $\partial_Y^{q+1}$ are the boundary homomorphisms at dimension $q+1$ of the singular cochain complexes of $X$ and $Y$ respectively.
\end{definition}
Everything is analogous to the homology case with directions reversed. For instance, one can define reduced cohomology and persistent cohomology.

We conclude by addressing subtle issues about persistent homology.

\begin{definition}[Filtered Chain Complex]\label{def:filtered_chain_complex}
Let $C_\bullet$ be a chain complex with coefficients in $\mathbb{F}$ with bases $\beta_\bullet$. Let $f: \cup_q \beta_q \to \mathbb{R}$. $C_\bullet$ is said to be filtered by $f$ if $f(c_i) \leq f(c)$ whenever $\partial c = \sum a_i c_i$, where $a_i \in \mathbb{F} - \{0\}$ and $c_i \in \cup_q \beta_q$.
\end{definition}

\begin{proof}[Proof of \cref{lem:persistence_diagram_reduced_homology}]
The first claim follows from the fact that homology and reduced homology coincide at every positive dimension. For the second claim, fix the coefficient field $\mathbb{F}$ and let $x_0$ be a global minimum of $f$. We have the following segment of long exact sequence (Cf. Theorem 2.16 of \citep{hatcher02_algtopo} and 2 paragraphs after its proof):
$$H_0(\{x_0\}) \to H_0(X_t) \to H_0(X_t, \{x_0\}) \to 0.$$
Recall from Proposition 2.7 of \citep{hatcher02_algtopo} that $H_0(X_t)$ is a direct sum of $\mathbb{F}$’s, one for each path-component of $X_t$. Then $H_0(\{x_0\}) \cong \mathbb{F}$ is generated by the component of $x_0$ (which is $\{x_0\}$ itself), and the first map sends this component to the component of $x_0$ in $X_t$.
Since the first map is nonzero, it is injective. In other words, we have the following short exact sequence:
$$0 \to H_0(\{x_0\}) \to H_0(X_t) \to H_0(X_t, \{x_0\}) \to 0.$$
Further, this sequence is natural in the sense that for $s \leq t$, the following diagram commutes:
$$
\begin{tikzcd}
0 \arrow[r] & H_0(\{x_0\}) \arrow[r, "i"] \arrow[d] & H_0(X_s) \arrow[r] \arrow[d] & {H_0(X_s, \{x_0\})} \arrow[r] \arrow[d] & 0 \\
0 \arrow[r] & H_0(\{x_0\}) \arrow[r, "i"]           & H_0(X_t) \arrow[r]           & {H_0(X_t, \{x_0\})} \arrow[r]           & 0
\end{tikzcd}
$$
We claim that $H_0(X_t) \cong H_0(\{x_0\}) \oplus H_0(X_t, \{x_0\})$, and this isomorphism extends to the level of persistence module. By the splitting lemma, it suffices to construct maps $p_t$ such that $p_t i$ is the identity on $H_0(\{x_0\})$ and the following diagram commutes
$$
\begin{tikzcd}
0 \arrow[r] & H_0(\{x_0\}) \arrow[d] & H_0(X_s) \arrow[l, "p_s"']\arrow[r] \arrow[d] & {H_0(X_s, \{x_0\})} \arrow[r] \arrow[d] & 0 \\
0 \arrow[r] & H_0(\{x_0\})           & H_0(X_t) \arrow[l, "p_t"'] \arrow[r]           & {H_0(X_t, \{x_0\})} \arrow[r]           & 0
\end{tikzcd}
$$
Define $p_t$ by sending the component of $x_0$ in $X_t$ to the component of $x_0$ in $\{x_0\}$, and all other components to $0$. Then it is straight-forward to check that $p_t$ satisfies the above conditions. Therefore, as a persistence module, $H_0(X_t) \cong H_0(\{x_0\}) \oplus H_0(X_t, \{x_0\})$. Note that $H_0(X_t)$ is an $(\min f, \infty)$-interval module. The lemma then follows.
\end{proof}

\section{The Smale Condition}

In this section, we check that the gradient field of $f_{k,D}$ satisfies the Smale condition on the unit sphere. This section was drafted by Harmonic's Aristotle and Opus 5.0, and the argument is curated by the author.

We first recall the relevant terminology.

The stable and unstable manifolds, denoted by $W^s(x)$ and $W^u(x)$, of a critical point $x$ of a gradient descent flow $\varphi^t(x')$ (where $t$ denotes time and $x'$ denotes initial location) are defined as follows (Section 2.1d of \citep{audin14_morseTheory}).
\begin{align*}
W^s(x) &= \{x': \lim_{t \to +\infty} \varphi^t(x') = x\}\\
W^u(x) &= \{x': \lim_{t \to -\infty} \varphi^t(x') = x\}
\end{align*}

Two submanifolds $M$ and $N$ of a manifold $S$ are said to be transverse if for every $x \in M \cap N$, the tangent space of $S$ at $x$ is spanned by the tangent spaces of $M$ and $N$ at $x$, or symbolically, $T_x S = T_x M + T_x N$ (Section A.3 of \citep{audin14_morseTheory}).

\begin{definition}[Smale Condition (Section 2.2b of \citep{audin14_morseTheory})]\label{def:smale_condition}
A smooth function $f$ on a manifold $S$ is said to satisfy the Smale condition if the stable and unstable manifolds of its gradient descent flow are transverse.
\end{definition}

To show that $f_{k,D}$ satisfies the Smale condition, we first explicitly identify the stable and unstable manifolds. Recall from \cref{lem:never_descent_to_zero} that we have the following negative gradient flow.
\begin{align}
x' &= -[\nabla f_{k,D}(x) - (\nabla f_{k,D}(x) \cdot x)x], \notag\\
\intertext{or in coordinate form,}
x_i' &= -(\lambda_i x_i^{k-2} - k f_{k,D}(x)) x_i \text{ for each } i \label{eqn:negative_gradient_flow_coordinates}
\end{align}
Let
\begin{align*}
S_\xi &= \{x \in S^{D-1} : x_i \text{ and } \xi_i \text{ have the same sign strictly.}\}\\
T_\xi &= \{x \in S^{D-1} :
\text{ For } \xi_i \neq 0, x_i \text{ and } \xi_i \text{ have the same sign strictly and } |x_i| \propto \lambda_i^{-1/(k-2)};\\
&\qquad \qquad \qquad \qquad \text{ for }\xi_j = 0 \text{ and } \xi_i \neq 0, \xi^k (\lambda_i x_i^{k-2} - \lambda_j x_j^{k-2}) > 0
\}
\end{align*}

In the above and hereafter, the strict sign of a number $a$ is $1$, $0$, or $-1$ depending on whether $a$ is strictly positive, zero, or strictly negative.

\begin{proposition}\label{prop:stable_unstable_manifold_identification}
Let $x^\xi$ be a critical point of $f_{k,D}$.
\begin{enumerate}
\item If $\xi^k = 1$, then $W^s(x^\xi) = S_\xi$ and $W^u(x^\xi) = T_\xi$.
\item If $\xi^k = -1$, then $W^s(x^\xi) = T_\xi$ and $W^u(x^\xi) = S_\xi$.
\end{enumerate}
\end{proposition}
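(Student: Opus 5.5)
The plan is to work directly with the coordinate form \cref{eqn:negative_gradient_flow_coordinates} and to reduce the problem to monotone quantities. Write $w_i = \lambda_i x_i^{k-2}$. Dividing the equations for two nonzero coordinates gives
$$\frac{d}{dt}\log\left|\frac{x_i}{x_j}\right| = w_j - w_i .$$
The plan has three parts: first record the invariance properties of the flow, then prove case (1) by a forward-time and a backward-time analysis, and finally deduce case (2) by a symmetry.

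First, the invariance facts. By uniqueness of ODE solutions, each hyperplane $\{x_i = 0\}$ is invariant, so the strict sign of every coordinate is preserved along flow lines. Similarly, each set $\{w_i = w_j\}$ is invariant: there $x_i/x_j$ is constant, and hence so is $w_i/w_j$, which equals $1$. Consequently the strict order between $w_i$ and $w_j$ can never flip. Both $S_\xi$ and $T_\xi$ are therefore invariant under the flow.

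For case (1), take $\xi^k=1$. Note that for even $k$ this is every critical point.

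\emph{Stable manifold, $S_\xi \subseteq W^s(x^\xi)$.} Start at $x\in S_\xi$. Since $f_{k,D}$ is a Lyapunov function and $S^{D-1}$ is compact, the $\omega$-limit of $x$ is a single critical point (critical points are isolated), and it lies in the closed face $\overline{S_\xi}$. That face contains $x^\xi$ together with critical points $x^\eta$ whose support is strictly smaller than that of $\xi$. I will rule out the latter using the estimate from the proof of \cref{lem:never_descent_to_zero}. Suppose $j\in\operatorname{supp}\xi\setminus\operatorname{supp}\eta$. Near $x^\eta$ we have $x_j' = (kf(x^\eta)+o(1))x_j$ with $kf(x^\eta)>0$, because every critical point in this face has positive value by \cref{lem:critical_values_and_ordering}. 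So $|x_j|$ would grow exponentially while it is supposed to tend to $0$; since $x_j\neq 0$, this is a contradiction. Hence the limit is $x^\xi$.

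\emph{Stable manifold, $W^s(x^\xi)\subseteq S_\xi$.} This follows from sign preservation together with the second point of \cref{lem:never_descent_to_zero}, which forces every coordinate outside the support of $\xi$ to vanish identically.

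\emph{Unstable manifold, $T_\xi\subseteq W^u(x^\xi)$.} Take $x\in T_\xi$. On the support of $\xi$ all $w_i$ are equal, and this equality is preserved, so the support coordinates stay proportional to $\lambda_i^{-1/(k-2)}$. For an off-support index $j$ we have $w_i>w_j$, and this strict inequality persists for all time. Hence $\frac{d}{dt}\log|x_j/x_i| = w_i-w_j>0$, so $|x_j/x_i|$ is increasing in forward time and decreasing in backward time. To conclude that $x_j/x_i\to 0$ as $t\to-\infty$, I will use the $\alpha$-limit: it is a critical point lying in the closure of $T_\xi$. The only such critical point whose support pattern is compatible with equal $w$ on $\operatorname{supp}\xi$ and strictly dominated off-support ratios is $x^\xi$. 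To make the dominance quantitative, I expect to use that $w_i-w_j$ stays bounded below on the relevant backward orbit.

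\emph{Unstable manifold, $W^u(x^\xi)\subseteq T_\xi$.} Conversely, suppose $x\notin T_\xi$. Either two support $w$'s differ, and then their log-ratio is strictly monotone with derivative bounded away from $0$ near $x^\xi$, so the orbit cannot approach $x^\xi$ in backward time. Or some off-support $j$ has $w_j\ge w_i$, and then $|x_j/x_i|$ is nonincreasing as $t\to-\infty$, so it does not tend to $0$. In either case the backward limit is not $x^\xi$.

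For case (2), take $\xi^k=-1$. This can only happen for odd $k$, and then $f(-x)=-f(x)$. The antipodal map therefore conjugates the descent flow to the ascent flow and sends $x^\xi$ to $x^{-\xi}$, which has $(-\xi)^k=1$. It also exchanges the roles of $W^s$ and $W^u$, and maps $S_{-\xi}$ to $S_\xi$ and $T_{-\xi}$ to $T_\xi$; in the latter the inequality direction flips exactly through the factor $\xi^k$. So case (2) follows from case (1).

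I expect the main obstacle to be the unstable-manifold statement, specifically the precise claim that the $\alpha$-limit is $x^\xi$ and not a nearby critical point with a larger support. I would first try to show that $|x_j/x_i|$ tends to $0$ exponentially in backward time, by using the Hessian form from \cref{lem:critical_points_of_tensor} to get a uniform gap $w_i-w_j\ge c>0$ near the $\alpha$-limit set.
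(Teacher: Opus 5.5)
Your stable-manifold argument and the antipodal reduction of case (2) to case (1) are essentially the paper's. The genuine gap is in $T_\xi \subseteq W^u(x^\xi)$, exactly where you suspect it.

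The closure cannot single out $x^\xi$. $\overline{T_\xi}$ imposes only the non-strict inequalities $w_i \ge w_j$ off the support. It therefore contains every $x^\zeta$ with strictly larger support and compatible signs; for example, $x^{\mathbf{1}_D} \in \overline{T_{e_1}}$. For odd $k$ it also contains the negative critical points supported off $\mathrm{supp}\,\xi$, where $w_i = 0$ on the support. Your proposed remedy is a uniform gap $w_i - w_j \ge c > 0$ near the $\alpha$-limit set, extracted from the Hessian. This is circular. If the $\alpha$-limit were such an $x^\zeta$ with $j \in \mathrm{supp}\,\zeta$, then $w_i - w_j \to 0$ along the backward orbit. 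So the gap is available only once you already know the limit is $x^\xi$.

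The fix is already in your computation, used through strictness rather than a gap. Because $\frac{d}{dt}\log|x_j/x_i| = w_i - w_j > 0$ for all $t$, two things follow.
\begin{itemize}
\item The ratio $|x_j/x_i|$ is bounded on $t \le 0$. Combined with the fixed proportions on $\mathrm{supp}\,\xi$ and $\sum_\ell x_\ell^2 = 1$, this keeps the support coordinates bounded away from $0$. That rules out $\alpha$-limits vanishing on $\mathrm{supp}\,\xi$, including the negative ones.
\item The ratio strictly decreases as $t \to -\infty$, so its limit is strictly below its value at $t=0$. If $j \in \mathrm{supp}\,\zeta$, that limit is $(\lambda_i/\lambda_j)^{1/(k-2)}$. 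This gives $\lambda_j |x_j|^{k-2} > \lambda_i |x_i|^{k-2}$ at $t=0$, contradicting $x \in T_\xi$.
\end{itemize}
The paper instead uses $(w_i - w_j)' = -(k-2)(w_i - w_j)(w_i + w_j - k f_{k,D})$. Its coefficient is positive near such $x^\zeta$, which forces $w_i \equiv w_j$ (last claim of \cref{lem:sign_preservation}). It then uses \cref{lem:absolute_coordinates_decreases} to stop $|x_i|$ from shrinking as $t \to -\infty$.

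There are also smaller slips in $W^u(x^\xi) \subseteq T_\xi$:
\begin{itemize}
\item The derivative of the log-ratio, a multiple of $w_j - w_i$, is \emph{not} bounded away from $0$ near $x^\xi$, since the support $w$'s coincide there. Strict monotonicity alone suffices: $\log(w_i/w_j)$ moves away from $0$ as $t \to -\infty$.
\item ``Nonincreasing as $t \to -\infty$'' should read ``nondecreasing''.
\item The sign condition of $T_\xi$ needs a (trivial) mention via sign preservation.
\item Invariance of $\{w_i = w_j\}$ should be derived from an ODE for $w_i - w_j$, not from ``$x_i/x_j$ is constant there''.
\end{itemize}
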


To prove this, we first establish two lemmas.

\begin{lemma} \label{lem:sign_preservation}
Along the negative gradient flow trajectory, 
\begin{enumerate}
\item the strict sign of $x_i$ is preserved for each $i$; and
\item the strict sign of $\lambda_i x_i^{k-2} - \lambda_j x_j^{k-2}$ is preserved for each pair of $i, j$.
\end{enumerate}
Further, suppose that as $t \to -\infty$, $x \to x^\zeta$, with $\zeta^k = 1$, $\zeta_i \neq 0, \zeta_j \neq 0$. Then $\lambda_i x_i^{k-2}(t) \equiv \lambda_j x_j^{k-2}(t)$.
\end{lemma}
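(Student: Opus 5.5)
The plan is to work directly with the coordinate form \cref{eqn:negative_gradient_flow_coordinates}, $x_i' = -(\lambda_i x_i^{k-2} - k f_{k,D}(x)) x_i$, which is a smooth ODE with globally defined solutions on the compact sphere.

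\textbf{First claim.} The flow is of the form $x_i' = g_i(x) x_i$ with $g_i$ smooth. Hence $x_i(t) = x_i(0)\exp\left(\int_0^t g_i(x(s))\,ds\right)$ along any trajectory. This exponential factor is strictly positive, so the strict sign of $x_i$ is preserved. Equivalently, uniqueness of solutions together with the invariance of $\{x_i = 0\}$ (already noted in \cref{lem:never_descent_to_zero}) gives the same conclusion.

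\textbf{Second claim.} Set $y_i = \lambda_i x_i^{k-2}$ and $F = k f_{k,D}(x)$. By the chain rule,
$$y_i' = (k-2)\lambda_i x_i^{k-3} x_i' = -(k-2)\, y_i (y_i - F).$$
So each $y_i$ satisfies the same autonomous-in-$y$ scalar equation $y' = h(y,t) := -(k-2)\, y\,(y - F(t))$, where $F(t)$ is a fixed continuous function along the trajectory. Then $w = y_i - y_j$ satisfies
$$w' = -(k-2)\,(y_i + y_j - F)\, w,$$
again of the form $w' = a(t) w$ with $a$ continuous. Thus $w(t) = w(0)\exp\left(\int_0^t a\right)$, and the strict sign of $w$ is preserved; in particular $w \equiv 0$ if it vanishes once. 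The only point to check is that $x_i^{k-3}$ is a polynomial, which holds since $k \geq 3$, so no sign issues arise.

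\textbf{Further claim.} Suppose $x(t) \to x^\zeta$ as $t \to -\infty$, with $\zeta^k = 1$ and $\zeta_i, \zeta_j \neq 0$. Then $y_i, y_j \to c := \xi$-level value $\lambda_i (x^\zeta_i)^{k-2} = \lambda_j (x^\zeta_j)^{k-2} > 0$; positivity follows from the proof of \cref{lem:critical_points_of_tensor} since $\zeta^k = 1$. Also $F \to kf_{k,D}(x^\zeta)$. At the critical point, $\lambda_i (x^\zeta_i)^{k-2} = k f_{k,D}(x^\zeta)$ by the critical point equation $y_i = F$ on the support. Hence
$$a(t) = -(k-2)(y_i + y_j - F) \to -(k-2)\,c < 0 \quad \text{as } t \to -\infty.$$
So for $t \le T$ we have $a(t) \le -\delta < 0$, and
$$|w(t)| = |w(T)| \exp\left(\int_t^T (-a)\right) \ge |w(T)|\, e^{\delta (T - t)}.$$
This grows without bound as $t \to -\infty$ unless $w(T) = 0$. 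But $w$ is bounded, since $|x| = 1$, and moreover $w \to 0$. Therefore $w(T) = 0$, and $w \equiv 0$ by the second claim.

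\textbf{Main obstacle.} The delicate point is the sign of the limit coefficient. It relies on $c > 0$, which is exactly where $\zeta^k = 1$ enters; for $\zeta^k = -1$ the coefficient is positive and the argument fails, consistent with the statement. Verifying $c = F(x^\zeta) > 0$ uses \cref{lem:critical_values_and_ordering}.
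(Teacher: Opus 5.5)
Your proof is correct and follows essentially the same route as the paper's: the multiplicative form $x_i' = g_i(x)\,x_i$ gives sign preservation, and $(y_i-y_j)' = -(k-2)(y_i+y_j-kf_{k,D})(y_i-y_j)$ gives both the second claim and the last one. For the last claim, the coefficient tends to $-(k-2)c<0$ as $t\to-\infty$, so a nonzero difference would blow up backward in time. Your explicit $\delta$--$T$ estimate simply makes rigorous the paper's brief ``explodes exponentially as time moves backward'' step.
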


\begin{proof}
The first claim follows from the linearity of \cref{eqn:negative_gradient_flow_coordinates}. For the second claim, let $y_i = \lambda_i x_i^{k-2}$.
\begin{align*}
y_i' &= (k-2)\lambda_i x_i^{k-3} x_i' 
\\&= -(k-2)\lambda_i x_i^{k-3}(\lambda_i x_i^{k-2} - kf_{k,D}(x)) x_i 
\\&= -(k-2)y_i(y_i - kf_{k,D}(x)).
\\\intertext{Hence,}
(y_i - y_j)' &= -(k-2)(y_i - y_j)(y_i + y_j - kf_{k,D}(x)).
\end{align*}
The second claim then follows from linearity of the ordinary differential equation.

For the last claim, note that at $x^\xi$, $y_i + y_j - kf_{k,D}(x) = \left ( \sum_{\zeta_\ell \neq 0} \lambda_\ell^{-2/(k-2)} \right)^{1 - k/2} > 0$. Hence the differential equation implies that $y_i - y_j$ is either identically 0 or it decays exponentially as time moves forward (equivalently, explodes exponentially as time moves backward). Since $y_i - y_j$ is bounded (say, by $2\lambda_1$), the difference is identically 0.
\end{proof}

\begin{lemma}\label{lem:absolute_coordinates_decreases}
Suppose $\xi^k = 1$ and $\xi_i \neq 0$. Let $x \in T_\xi$. Then $\lambda_i x_i^{k-2} - kf_{k,D}(x) \geq 0$. Further, by the ordinary differential equation \cref{eqn:negative_gradient_flow_coordinates}, along the trajectory of $x$, $|x_i|$ is nonincreasing.
\end{lemma}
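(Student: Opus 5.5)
The plan is to rewrite $k f_{k,D}(x)$ as a weighted average of the quantities $y_j = \lambda_j x_j^{k-2}$ (the notation from the proof of \cref{lem:sign_preservation}), with weights $x_j^2$ that sum to $1$. The first claim then follows from a comparison of $y_j$ with $y_i$, and the second from the coordinate ODE \cref{eqn:negative_gradient_flow_coordinates} together with the invariance of $T_\xi$ under the flow.

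First I describe the values $y_j$ on $T_\xi$. For $\xi_j \neq 0$, the sign and proportionality conditions give $|x_j| = c'\lambda_j^{-1/(k-2)}$ for a common constant $c'>0$, so $\lambda_j |x_j|^{k-2}$ is a common value $c>0$. Since $\xi^k = 1$, either $k$ is even, so $x_j^{k-2} = |x_j|^{k-2}$, or $k$ is odd and all nonzero $\xi_j$ equal $1$, so $x_j > 0$. In both cases $y_j = c$ for every $j$ in the support of $\xi$, and in particular $y_i = c$. For $\xi_j = 0$, the defining inequality of $T_\xi$ with $\xi^k=1$ reads $y_j < c$.

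Next I compare $kf_{k,D}(x)$ with $y_i$:
\[
k f_{k,D}(x) = \sum_j \lambda_j x_j^k = \sum_j y_j x_j^2 \leq c \sum_j x_j^2 = c = y_i .
\]
The inequality holds termwise because $x_j^2 \geq 0$ and $y_j \leq c$ for all $j$. This is valid even when some $y_j$ are negative, which can happen for odd $k$ off the support. Hence $\lambda_i x_i^{k-2} - k f_{k,D}(x) \geq 0$.

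For the monotonicity claim, \cref{eqn:negative_gradient_flow_coordinates} gives $(x_i^2)' = -2(y_i - kf_{k,D}(x))\,x_i^2$. So $|x_i|$ is nonincreasing at every time at which the point lies in $T_\xi$. It remains to show that the trajectory starting at $x \in T_\xi$ stays in $T_\xi$; this is the main step. I would derive it from \cref{lem:sign_preservation}. The strict signs of the $x_j$ are preserved. The strict signs of $y_i - y_j$ are preserved, and this includes the sign zero, so the equalities $y_i = y_j$ on the support persist. Given the fixed signs of $x_j$ on the support, equality of the $y_j$ there is equivalent to $|x_j| \propto \lambda_j^{-1/(k-2)}$, because $t\mapsto t^{k-2}$ is injective on $t>0$. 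Finally, the strict inequalities $y_i - y_j > 0$ for $\xi_j=0$ persist. Thus $T_\xi$ is forward invariant, the first claim applies along the whole trajectory, and $|x_i|$ is nonincreasing.
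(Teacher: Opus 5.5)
Your proposal is correct and follows the paper's proof essentially step for step. Like the paper, you write $kf_{k,D}(x)=\sum_j(\lambda_j x_j^{k-2})\,x_j^2$ as a weighted average with weights $x_j^2$ summing to $1$, bound it by the common value of $\lambda_j x_j^{k-2}$ on the support of $\xi$, and invoke \cref{lem:sign_preservation} to keep the defining equalities and strict inequalities of $T_\xi$ along the trajectory. Your explicit handling of the parity of $k$, and of the forward invariance of $T_\xi$, spells out details the paper leaves implicit.
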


\begin{proof}
Observe that by the definition of $T_\xi$, whenever $\xi_\alpha, \xi_\beta \neq 0$ and $\xi_\gamma = 0$, $\lambda_\alpha x_\alpha^{k-2} = \lambda_\beta x_\beta^{k-2} > \lambda_\gamma x_\gamma^{k-2}$. Furthermore, these relations hold along the trajectory by \cref{lem:sign_preservation}. Denote by $C(x)$ the common values of $\lambda_\alpha x_\alpha^{k-2}$ for $\xi_\alpha \neq 0$. Hence at $x$ and along its trajectory,
\begin{align*}
\lambda_i x_i^{k-2} - kf_{k,D}(x)
&= C(x) - \sum \lambda_\ell x_\ell^k
\\&= C(x) - \sum (\lambda_\ell x_\ell^{k-2}) x_\ell^2
\\&= C(x) - \left(\sum_{\xi_\ell \neq 0} C(x) x_\ell^2 + \sum_{\xi_\gamma = 0} (\lambda_\gamma x_\gamma^{k-2}) x_\gamma^2 \right)
\\& \geq C(x) - \left(\sum_{\xi_\ell \neq 0} C(x) x_\ell^2 + \sum_{\xi_\gamma = 0} C(x) x_\gamma^2 \right) 
\\& = C(x) - C(x) \sum x_\ell^2
\\& = 0.
\end{align*}
\end{proof}

\begin{proof}[Proof of \cref{prop:stable_unstable_manifold_identification}]
First, we argue that it suffices to consider the case $\xi^k = 1$. Indeed, for $\xi^k = -1$, $k$ is odd and all nonzero $x_i$'s are strictly negative. The case for $\xi^k = 1$ implies that $W^u(x^{-\xi}) = T_{-\xi} = -T_\xi$. Since $f_{k,D}(-x) = -f_{k,D}(x)$, $W^s(x^\xi) = -W^u(x^{-\xi})$. The second claim then follows. The case for the stable manifold is similar.

Now, fix $\xi$ and suppose $\xi^k = 1$. To show $W^s(x^\xi) \subseteq S_\xi$, let $x \in W^s(x^\xi)$. Then $x^\xi = \lim_{t \to +\infty} x(t)$. By a limiting argument, if $x^\xi_i \neq 0$, then $x_i \neq 0$. If $x^\xi_i = 0$, by the second point of \cref{lem:never_descent_to_zero}, $x_i = 0$.

The argument for $S_\xi \subseteq W^s(x^\xi)$ is similar. Let $x \in S_\xi$ and $x^\zeta = \lim_{t \to +\infty} x(t)$. To apply \cref{lem:never_descent_to_zero}, we need to argue that $\zeta^k = 1$: Suppose not, then $k$ is odd and $\zeta_i < 0$ for some $i$. A limiting argument shows that $x_i < 0$, and hence $\xi^k = \xi_i^k = \xi_i = -1$, contradictory to our assumption that $\xi^k = 1$. The rest of the argument is analogous to the reverse inclusion.


To show $W^u(x^\xi) \subseteq T_\xi$, suppose $x(0) = x$ and $\lim_{t \to -\infty} x(t) = x^\xi$. Let $\xi_i, \xi_j \neq 0$. At $x^\xi$, this difference is 0. Then $\lambda_i x_i^{k-2} - \lambda_j x_j^{k-2} = 0$ by the last claim of \cref{lem:sign_preservation}. Therefore for $\xi_i \neq 0$, $\lambda_i x_i^{k-2} = C$ that is independent of $i$; in other words, $|x_i| \propto \lambda_i^{-1/(k-2)}$. For $\xi_j = 0$ and $\xi_i \neq 0$, $\lambda_i x_i^{k-2} - \lambda_j x_j^{k-2}$ converges to a positive number ($\lambda_i (x^\xi_i)^{k-2}$), since the strict sign is conserved, the difference is strictly positive throughout.

To show $T_\xi \subseteq W^u(x^\xi)$, let $x \in T_\xi$ and $x^\zeta = \lim_{t \to -\infty} x(t)$. It suffices to show that $\zeta_i = 0$ if and only if $\xi_i = 0$. Indeed, this implies that $\zeta_i$ and $\xi_i$ have the same strict sign, because whenever $\zeta_i \neq 0$, it has the same strict sign as $x_i$, and hence as $\xi_i$.

If $\xi_i \neq 0$, then $x_i \neq 0$. By \cref{lem:absolute_coordinates_decreases}, $|x_i|$ is non increasing as time moves forward along the negative gradient flow, hence $|x^\zeta_i| \geq |x_i| > 0$, i.e. $\zeta_i \neq 0$.

We show that $\zeta_j \neq 0$ implies $\xi_j \neq 0$ by contradiction. Suppose $\zeta_j \neq 0$ and $\xi_j = 0$. Since $\xi \neq 0$, let $\xi_i \neq 0$. Then $x_i \neq 0$. Again, by \cref{lem:absolute_coordinates_decreases}, $\zeta_i \neq 0$.

We next establish that $\zeta^k = 1$: Suppose not. Then $k$ is odd and $\zeta_i = -1$. Since sign is preserved, $x_i < 0$. By the definition of $T_\xi$, $\xi_i$ has the same sign as $x_i$, and hence $\xi^k = \xi_i^k = \xi_i = -1$, contradictory to our assumption that $\xi^k = 1$.

By the definition of $T_\xi$, since $\xi_i \neq 0 = \xi_j$, $\lambda_i x_i^{k-2} - \lambda_j x_j^{k-2} > 0$. This, however, contradicts with the last claim of \cref{lem:sign_preservation}, as desired.

\end{proof}

\begin{proposition} $\quad$

\begin{enumerate}
\item The tangent space of $S_\xi$ at $x$ is $x^\perp \cap \{v \in \mathbb{R}^D: v_i = 0 \text{ whenever }\xi_i = 0\}$.
\item The tangent space of $T_\xi$ at $x$ is $x^\perp \cap \{v \in \mathbb{R}^D: \lambda_i x_i^{k-3} v_i = \lambda_j x_j^{k-3} v_j \text{ whenever } \xi_i \neq 0, \xi_j \neq 0\}$, and its dimension is $D - |\{i: \xi_i \neq 0\}|$.
\end{enumerate}
\end{proposition}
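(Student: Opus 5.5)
Both claims are local computations: each of $S_\xi$ and $T_\xi$ is written as a regular level set (open in a sub-sphere) of an explicit smooth map, and the tangent space is the kernel of its differential. Throughout, the tangent space of $S^{D-1}$ at $x$ is $x^\perp$.

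For the first point, I observe that $S_\xi$ is an open subset (cut out by the strict sign conditions on the coordinates with $\xi_i \neq 0$) of the sub-sphere
$$S^{D-1} \cap \{v : v_i = 0 \text{ whenever } \xi_i = 0\}.$$
This sub-sphere is the unit sphere of a coordinate subspace $L_\xi$. Its tangent space at $x$ is $x^\perp \cap L_\xi$, and openness gives the same tangent space for $S_\xi$. This settles the first point.

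For the second point, I will work inside the open set
$$U = \{x \in S^{D-1} : x_i \text{ has the strict sign of } \xi_i \text{ for } \xi_i \neq 0\}.$$
On $U$, $T_\xi$ is the subset of $Z = \{x \in U : \lambda_i x_i^{k-2} = \lambda_j x_j^{k-2} \text{ for all } i,j \in \operatorname{supp}\xi\}$ cut out by the strict inequalities $\xi^k(\lambda_i x_i^{k-2} - \lambda_j x_j^{k-2}) > 0$ for $\xi_j = 0$; these are open conditions. Let $m = |\operatorname{supp}\xi|$ and fix $i_0 \in \operatorname{supp}\xi$. I define
$$G : U \to \mathbb{R}^{m-1}, \qquad G(x) = \bigl(\lambda_i x_i^{k-2} - \lambda_{i_0} x_{i_0}^{k-2}\bigr)_{i \in \operatorname{supp}\xi \setminus \{i_0\}}.$$
Then $Z = G^{-1}(0)$. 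Its differential is $dG_x(v) = (k-2)\bigl(\lambda_i x_i^{k-3} v_i - \lambda_{i_0} x_{i_0}^{k-3} v_{i_0}\bigr)_i$ restricted to $v \in x^\perp$. Hence, once $G$ is shown to be a submersion, the tangent space is exactly the stated set: $x^\perp$ intersected with the equalities $\lambda_i x_i^{k-3} v_i = \lambda_j x_j^{k-3} v_j$ (pairwise equalities follow transitively from those with $i_0$).

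The main obstacle is showing that $dG_x$ has full rank $m-1$ on $x^\perp$, since the constraint $v \cdot x = 0$ could interfere. Write $a_i = \lambda_i x_i^{k-3}$, which is nonzero because $x_i \neq 0$ on the support. The map $v \mapsto (a_i v_i - a_{i_0} v_{i_0})_i$, defined on $\mathbb{R}^{\operatorname{supp}\xi}$, is surjective, with one-dimensional kernel spanned by $w$, where $w_i = 1/a_i$. I split into two cases.
\begin{itemize}
\item If $m < D$, I use a direction $e_j$ with $\xi_j = 0$ to correct the inner product with $x$ when $x_j \neq 0$. When $x_j = 0$, the vector $e_j$ already lies in $x^\perp$, so instead I adjust using the kernel direction.
\item Cleaner is a single argument: $x^\perp$ has codimension one, so $dG_x|_{x^\perp}$ fails to be surjective only if $x^\perp \supseteq$ the kernel of $dG_x$ on $\mathbb{R}^D$. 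That kernel contains $w$ extended by zero, and all $e_j$ with $\xi_j = 0$.
\end{itemize}
Now compute $w \cdot x = \sum_{i \in \operatorname{supp}\xi} x_i/(\lambda_i x_i^{k-3}) = \sum_i x_i^2/(\lambda_i x_i^{k-2})$. On $Z$, the common value $\lambda_i x_i^{k-2} = C \neq 0$, so $w \cdot x = \sum_i x_i^2 / C \neq 0$. Hence $w \notin x^\perp$, and surjectivity holds.

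Finally, the dimension is $(D-1) - (m-1) = D - m = D - |\{i : \xi_i \neq 0\}|$. As a consistency check, this matches the index of $x^\xi$ given by the fourth point of \cref{lem:critical_points_of_tensor} when $\xi^k = 1$. It also matches the co-index when $\xi^k = -1$, where $T_\xi$ is the stable manifold by \cref{prop:stable_unstable_manifold_identification}.
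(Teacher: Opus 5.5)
Your proposal is correct and follows essentially the paper's proof: the same defining map (your $G$ is the paper's $F$), the regular-level-set description of the tangent space as $\ker dG_x \cap x^\perp$, and surjectivity on $x^\perp$ reduced to the nonvanishing of $\sum_{\xi_j\neq 0} x_j^2/C$. Your criterion that $dG_x|_{x^\perp}$ is onto as soon as $\ker dG_x \not\subseteq x^\perp$, witnessed by $w$, is a coordinate-free version of the paper's row reduction (its pivot equals $\lambda_{i_0}x_{i_0}^{k-3}\,(w\cdot x)$), and your first, abandoned bullet can simply be deleted.
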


\begin{proof}
The first claim is trivial. For the second claim, we recall basic manifold theory.

\begin{proposition}[Corollary 5.14 of \citep{lee_smooth_manifold}]
Every regular level set of a smooth map between smooth manifolds is a properly embedded submanifold.
\end{proposition}

\begin{proposition}[Proposition 5.38 of \citep{lee_smooth_manifold}]
Suppose $S$ is a smooth manifold and $M \subseteq S$ is an embedded submanifold. Let $p \in S$ and $U$ be a neighborhood of $p$ in $S$. If $M \cap U$ is locally the regular level set of some smooth $F: U \to N$, then the tangent space of $M$ at a point $p$ is the kernel of $dF$ at $p$. 
\end{proposition}

To construct the $U$ and $F$ in the proposition, fix $x \in T_\xi$ and $i_0$ such that $\xi_{i_0} \neq 0$. Let 
$$U = \{\tilde x \in S^{D-1}: \xi_i \text{ and } \tilde x_i \text{ have the same sign when }\xi_i \neq 0, \quad \xi^k(\lambda_{i_0} \tilde x_{i_0}^{k-2} - \lambda_j \tilde x_j^{k-2}) > 0 \text{ for } \xi_j = 0\}.$$
Then $U$ is an open neighborhood of $x$ in $S^{D-1}$. Define $F: U \to \mathbb{R}^{|\{i: \xi_i \neq 0\}| - 1}$ by
$F_\alpha(x) = \lambda_{i_\alpha} x_{i_\alpha}^{k-2} - \lambda_{i_0} x_{i_0}^{k-2}$, where the $i_\alpha$'s range over $\{i: \xi_i \neq 0, i \neq i_0\}$. Then $T_\xi = F^{-1}(0)$ and in ambient coordinates,
$$[dF_x(v)]_\alpha = (k-2) (\lambda_{i_\alpha} x_{i_\alpha}^{k-3} v_{i_\alpha} - \lambda_{i_0} x_{i_0}^{k-3} v_{i_0}).$$
Direct computation shows that the kernel of $dF$ at $x$ is indeed the expression in the proposition.

It remains to show $x$ is a regular point, i.e. $dF_x$ is surjective. (The dimension claim follows from the rank-nullity theorem.) Consider the following matrix. Its first row is $dF_x$ in matrix form (scaled by $(k-2)$), and its second row enforces the tangency condition on the sphere. It suffices to show this matrix is surjective.
\[
\begin{blockarray}{cccc}
& \{i: \xi_i \neq 0, i \neq i_0\} & i_0 & \{j: \xi_j = 0\}  \\
\begin{block}{c[ccc]}
\{i: \xi_i \neq 0, i \neq i_0\}    & D & -\lambda_{i_0} x_{i_0}^{k-3} \mathbf{1} & 0 \\
\text{tangency to sphere} & a^T & x_{i_0} & x^T|\{j: \xi_j = 0\} \\
\end{block}
\end{blockarray},
\]
where $D$ is a diagonal matrix with entries $\lambda_{i_\alpha} x_{i_\alpha}^{k-3}$ (we abuse notation here as $D$ also denotes the ambient dimension), and $a = x|\{i: \xi_i \neq 0, i \neq i_0\}$.
The surjectivity of the above matrix is a consequence of the invertibility of the following square matrix.
$$
\begin{bmatrix}
D & -\lambda_{i_0} x_{i_0}^{k-3} \mathbf{1}
\\
a^T & x_{i_0}
\end{bmatrix}
$$
Row reducing gives the following upper triangular matrix
$$
\begin{bmatrix}
I & 0
\\
- a^T D^{-1} & 1
\end{bmatrix}
\begin{bmatrix}
D & -\lambda_{i_0} x_{i_0}^{k-3} \mathbf{1}
\\
a^T & x_{i_0}
\end{bmatrix}
=
\begin{bmatrix}
D & -\lambda_{i_0} x_{i_0}^{k-3} \mathbf{1}
\\
0 & x_{i_0}  + \lambda_{i_0} x_{i_0}^{k-3} (a^T D^{-1}) \mathbf{1}
\end{bmatrix}.
$$
It suffices to show that $x_{i_0}  + \lambda_{i_0} x_{i_0}^{k-3} (a^T D^{-1}) \mathbf{1} \neq 0$. Recall that, for $\xi_i \neq 0$, $\lambda_i x_i^{k-2} = C(x)$, where $C(x)$ does not depend on $i$.
\begin{align*}
x_{i_0}  + \lambda_{i_0} x_{i_0}^{k-3} (a^T D^{-1}) \mathbf{1}
&= x_{i_0} + \lambda_{i_0} x_{i_0}^{k-3} \sum_{\substack{\xi_j \neq 0\\j \neq i_0}} x_j/(\lambda_j x_j^{k-3})
\\&= x_{i_0} + C(x)/x_{i_0} \sum_{\substack{\xi_j \neq 0\\j \neq i_0}} x_j^2 /C(x)
\\&= \frac{1}{x_{i_0}} \left ( x_{i_0}^2  + \sum_{\substack{\xi_j \neq 0\\j \neq i_0}} x_j^2 \right)
\\&= \frac{1}{x_{i_0}} \left (\sum_{\xi_j \neq 0} x_j^2 \right)
\\& \neq 0.
\end{align*}
The proposition then follows.
\end{proof}

\begin{theorem}\label{thm:f_is_smale}
$f_{k,D}$ satisfies the Smale condition, i.e. whenever $x \in W^u(x^\xi) \cap W^s(x^\zeta)$, the tangent space of the unit sphere at $x$ is spanned by those of $W^u(x^\xi)$ and $W^s(x^\zeta)$ at $x$.
\end{theorem}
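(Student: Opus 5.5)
The plan is to reduce the Smale condition to linear algebra, using the explicit stable and unstable manifolds from \cref{prop:stable_unstable_manifold_identification} and their tangent spaces from the proposition just above. Fix $x \in W^u(x^\xi) \cap W^s(x^\zeta)$. Write $\text{supp}\,\eta = \{i : \eta_i \neq 0\}$ and $V_A = \{v \in \mathbb{R}^D : v_i = 0 \text{ for } i \in A\}$. The target is $x^\perp = \text{Tan}\,W^u(x^\xi) + \text{Tan}\,W^s(x^\zeta)$. If $\xi = \zeta$, then $W^u \cap W^s = \{x^\xi\}$ and the claim is the Hessian splitting of $T_{x^\xi}S^{D-1}$ from \cref{lem:critical_points_of_tensor}. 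So I assume $\xi \neq \zeta$ and split into four cases according to $(\xi^k, \zeta^k)$.

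Two facts are used throughout. The first is that $\text{Tan}\,S_\eta = x^\perp \cap V_{(\text{supp}\,\eta)^c}$, and that $S_\eta$ fixes the support of $x$. The second is that $\text{Tan}\,T_\eta \supseteq x^\perp \cap V_{\text{supp}\,\eta}$, with all coordinates off $\text{supp}\,\eta$ free. In addition, $\text{Tan}\,T_\eta$ contains the ``radial'' direction $r^\eta$, given by $r^\eta_i = 1/(\lambda_i x_i^{k-3})$ on $\text{supp}\,\eta$ and $0$ elsewhere. This direction satisfies the equality constraint, and $x \cdot r^\eta = \sum_{\text{supp}\,\eta} x_i^2/C(x) \neq 0$, the same computation as in the regularity argument above. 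Hence any vector supported off $\text{supp}\,\eta$ can be corrected by a multiple of $r^\eta$ to land in $x^\perp$.

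Case $\xi^k = \zeta^k = 1$: here $W^u = T_\xi$ and $W^s = S_\zeta$, and $\text{supp}\,\xi \subseteq \text{supp}\,x = \text{supp}\,\zeta$. Given $w \in x^\perp$, let $u$ be $w$ restricted to $(\text{supp}\,\zeta)^c$. Then $x \cdot u = 0$ because $x$ vanishes there, and $u$ vanishes on $\text{supp}\,\xi$, so $u \in \text{Tan}\,T_\xi$. The remainder $w - u$ is supported on $\text{supp}\,\zeta$ and lies in $x^\perp$, so it lies in $\text{Tan}\,S_\zeta$. Case $\xi^k = \zeta^k = -1$ (odd $k$): now $W^u = S_\xi$ and $W^s = T_\zeta$, and the same argument works with the roles exchanged. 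Case $\xi^k = -1$, $\zeta^k = 1$: the intersection is $S_\xi \cap S_\zeta$, which is empty unless $\xi = \zeta$, and that case was handled above.

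Case $\xi^k = 1$, $\zeta^k = -1$: this is the one that needs care, and I expect it to be the main obstacle. Here $x \in T_\xi \cap T_\zeta$, and the sign conditions force $\text{supp}\,\xi \cap \text{supp}\,\zeta = \emptyset$. Split $w = a + b$, with $b$ equal to $w$ on $\text{supp}\,\xi$ and $a$ equal to $w$ elsewhere. Set $e = s\,r^\zeta$, choosing the scalar $s$ so that $x \cdot e = x \cdot b$. Then $a + e$ vanishes on $\text{supp}\,\xi$ and is orthogonal to $x$, so $a + e \in \text{Tan}\,T_\xi$. Also $b - e$ is orthogonal to $x$, and on $\text{supp}\,\zeta$ it equals $-s\,r^\zeta$, which satisfies the equality constraint, so $b - e \in \text{Tan}\,T_\zeta$. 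Summing gives $w$, which proves transversality. The delicate point is confirming that the constraint set defining $\text{Tan}\,T_\zeta$ really contains $r^\zeta$ plus arbitrary vectors supported off $\text{supp}\,\zeta$. This requires $x_i \neq 0$ on $\text{supp}\,\zeta$ and the nonvanishing of $x \cdot r^\zeta$, both of which follow from the computation already carried out in the preceding proposition.
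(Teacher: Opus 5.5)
Your proof is correct. The cases $\xi^k=\zeta^k=1$ and $\xi^k=\zeta^k=-1$ match the paper's argument: split $w$ into its coordinates inside and outside $\text{supp}\,\zeta$ (respectively $\text{supp}\,\xi$).

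You differ in the other two cases.

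\textbf{Case $\xi^k=-1,\ \zeta^k=1$.} You note directly that $S_\xi$ and $S_\zeta$ are distinct sign strata and hence disjoint. The paper instead gets emptiness from the flow lemma (\cref{lem:never_descent_to_zero}). Your argument is shorter.

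\textbf{Case $\xi^k=1,\ \zeta^k=-1$.} The paper counts dimensions. It uses $\dim \text{Tan}\,T_\eta = D-|\text{supp}\,\eta|$, asserts that the combined constraint matrix cutting out $\text{Tan}\,T_\xi\cap\text{Tan}\,T_\zeta$ has full rank (the row reduction is only sketched), and concludes that the sum has dimension $D-1$. You instead give an explicit splitting $w=(a+s\,r^\zeta)+(b-s\,r^\zeta)$. It needs only two facts: the supports are disjoint, and $x\cdot r^\zeta=C(x)^{-1}\sum_{\text{supp}\,\zeta}x_i^2\neq 0$. The second is the same quantity that appears in the paper's regularity computation for $T_\eta$. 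So you avoid both the dimension formula and the rank claim, which makes your argument more elementary and self-contained.

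What the paper's route gives in exchange is the dimension of the intersection as a by-product: $D-|\text{supp}\,\xi|-|\text{supp}\,\zeta|+1=\text{ind}\,x^\xi-\text{ind}\,x^\zeta$. This confirms that the connecting manifold has the expected dimension.

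Your separate treatment of $\xi=\zeta$ via the Hessian splitting is harmless but unnecessary. Your case-1 and case-4 decompositions already work verbatim at $x=x^\xi$.
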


\begin{proof}
There are four possibilities.
\begin{enumerate}
\item $\xi^k = 1$, $\zeta^k = 1$, $x \in T_\xi \cap S_\zeta$.
\item $\xi^k = -1$, $\zeta^k = 1$, $x \in S_\xi \cap S_\zeta$.
\item $\xi^k = 1$, $\zeta^k = -1$, $x \in T_\xi \cap T_\zeta$. 
\item $\xi^k = -1$, $\zeta^k = -1$, $x \in S_\xi \cap T_\zeta$.
\end{enumerate}

For the first case, by the second claim of \cref{lem:never_descent_to_zero}, $\xi_i = 0$ and $x_i = 0$ whenever $\zeta_i = 0$. Pick any $v \in x^\perp$. Let $v^\zeta$ be the projection to $\text{span } \{e_i: \zeta_i \neq 0\}$. $v_\zeta$ is still tangent to the sphere because
$$\sum v^\zeta_i x_i = \sum v_i x_i - \sum_{\zeta_i = 0} v_i x_i = 0 - 0 = 0 \quad \text{(since $\zeta_i = 0$ implies $x_i = 0$)}.$$
Let $v^\xi = v - v^\zeta$. Then whenever $\xi_i \neq 0$, $\zeta_i \neq 0$, and hence $v^\xi_i = 0$, and hence $\lambda_i x_i^{k-3} v^\xi_i = 0$. Therefore, $v^\xi$ lies in the tangent space of $T_\xi$.

The fourth case is analogous.

The second case is vacuously true because the intersection is empty. Indeed, the third point of \cref{lem:never_descent_to_zero} implies whenever $\zeta_i$ is nonzero, then so is $\xi_i$. The first point of \cref{lem:never_descent_to_zero} then implies that whenever $\zeta_i$ is nonzero, $\xi_i$ and $\zeta_i$ have the same sign. Since $\xi^k = -1$, $k$ is odd. Therefore $\xi_i$ is nonpositive and $\zeta_i$ is nonnegative. These together imply that $\zeta_i \equiv 0$, a contradiction.

For the third case we appeal to a dimension argument. We have shown that dimension of the tangent space of $T_\xi$ is $D - |\{i: \xi_i \neq 0\}|$. Similarly the dimension of $T_\zeta$ is $D - |\{i: \zeta_i \neq 0\}|$. Since $\xi^k \neq \zeta^k$, $\{i: \zeta_i \neq 0\}$ and $\{i: \xi_i \neq 0\}$ are disjoint, hence the intersection of the two tangent spaces are defined by $|\{i: \xi_i \neq 0\}| - 1 + |\{i: \zeta_i \neq 0\}| - 1 + 1 = |\{i: \xi_i \neq 0\}| + |\{i: \zeta_i \neq 0\}| - 1$ equations, where the equations from the two spaces involve disjoint variables. Therefore, the intersection of the two tangent spaces is the null space of the following matrix
\[
\begin{blockarray}{cccccc}
& \{i: \xi_i \neq 0, i \neq i_0\} & \{j: \zeta_j \neq 0, j \neq j_0\} & i_0 & j_0 & \{\ell: \xi_\ell = \zeta_\ell = 0\}  \\
\begin{block}{c[ccccc]}
\{i: \xi_i \neq 0, i \neq i_0\} & D_1 & & -\lambda_{i_0} x_{i_0}^{k-3} \mathbf{1} \\
\{j: \zeta_j \neq 0, j \neq j_0\} & & D_2 & & -\lambda_{j_0} x_{j_0}^{k-3} \mathbf{1} \\
\text{tangency to sphere} & x^T|\{i: \xi_i \neq 0, i \neq i_0\} & x^T|\{j: \zeta_j \neq 0, j \neq j_0\} & x_{i_0} & x_{j_0} & x^T|\{\ell: \xi_\ell = \zeta_\ell = 0\} \\
\end{block}
\end{blockarray},
\]
where $D_1$ and $D_2$ are analogous diagonal matrices like $D$ in the previous proof. Row reducing this matrix as in the previous proof shows that this matrix is also full-rank, and hence the dimension of the intersection is $D - (|\{i: \xi_i \neq 0\}|-1 + |\{j: \zeta_j \neq 0\}|-1 + 1) = D - (|\{i: \xi_i \neq 0\}| + |\{j: \zeta_j \neq 0\}|) + 1$. Combining, the dimension of the span of the two tangent spaces is 
$$(D - |\{i: \xi_i \neq 0\}|) + (D - |\{i: \zeta_i \neq 0\}|) - [D - (|\{i: \xi_i \neq 0\}| + |\{i: \zeta_i \neq 0\}|) + 1] = D - 1,$$
as desired.

\end{proof}


\bibliography{bib_positive_orthogonally_decomposable_tensor.bib}
\end{document}